\newtheorem{thm}{Theorem}[section]
\newtheorem{prop}[thm]{Proposition}
\newtheorem{lemma}[thm]{Lemma}
\newtheorem{cor}[thm]{Corollary}
\newtheorem{dfn}[thm]{Definition}
\theoremstyle{definition} 
\newtheorem{hyp}[thm]{Hypothesis}
\theoremstyle{definition} \newtheorem{rmk}[thm]{Remark}
\newcommand{\zz}{\mathbb{Z}}
\newcommand{\ff}{\mathbb{F}}
\newcommand{\Gal}{\mathrm{Gal}}
\newcommand{\Sp}{\mathrm{Sp}}
\newcommand{\End}{\mathrm{End}}
\newcommand{\Aut}{\mathrm{Aut}}
\title{Lifting images of standard representations of symmetric groups}
\author{Jeffrey Yelton}
\begin{document}

\maketitle

\begin{abstract}

We investigate closed subgroups $G \subseteq \Sp_{2g}(\zz_2)$ whose modulo-$2$ images coincide with the image $\mathfrak{S}_{2g + 1} \subseteq \Sp_{2g}(\ff_2)$ of $S_{2g + 1}$ or the image $\mathfrak{S}_{2g + 2} \subseteq \Sp_{2g}(\ff_2)$ of $S_{2g + 2}$ under the standard representation.  We show that when $g \geq 2$, the only closed subgroup $G \subseteq \Sp_{2g}(\zz_2)$ surjecting onto $\mathfrak{S}_{2g + 2}$ is its full inverse image in $\Sp_{2g}(\zz_2)$, while all subgroups $G \subseteq \Sp_{2g}(\zz_2)$ surjecting onto $\mathfrak{S}_{2g + 1}$ are open and contain the level-$8$ principal congruence subgroup of $\Sp_{2g}(\zz_2)$.  As an immediate application, we are able to strengthen a result of Zarhin on $2$-adic Galois representations associated to hyperelliptic curves.  We also prove an elementary corollary concerning even-degree polynomials with full Galois group.

\end{abstract}

\section{Introduction} \label{sec1}

Let $d \geq 3$ be an integer, and let $S_d$ denote the symmetric group on $d$ points.  We have the well-known \textit{standard representation} of $S_d$ over $\ff_2$, which is defined as in \cite[\S2.2]{wagner1976faithful}.  Namely, given an $\ff_2$-vector space $W$ of dimension $d$ and an ordered basis $\{e_1, ... , e_d\}$, there is an obvious left action of $S_d$ defined by $^{\sigma}\!e_i = e_{\sigma(i)}$ for $\sigma \in S_d$ and $1 \leq i \leq d$.  The subspace 
$$W_0 := \{w \in W \ | \ \sum_{i = 1}^{d} e_i^*(w) = 0\},$$
 where each $e_i^* : W \to \ff_2$ is the $i$th coordinate map, is clearly invariant under this action of $S_d$.  Moreover, if $d$ is even, the element $w_0 \in W$ whose coordinates with respect to our fixed basis are all $1$'s lies in $W_0$ and is fixed by $S_d$, thus inducing a representation of $S_d$ on $W_0 / \langle w_0 \rangle$.  The standard representation when $d$ is odd (resp. when $d$ is even) is the induced representation of $S_d$ on $\bar{V} := W_0$ (resp. on $\bar{V} := W_0 / \langle w_0 \rangle$).  Note that in both cases, $\bar{V}$ has dimension $2g$, where $g = \lfloor (d - 1) / 2 \rfloor$.

Since any element of $W_0$ can be written as $\Sigma_{i \in I} e_i$ where $I \subseteq \{1, ... , d\}$ is an even-cardinality subset, each element of $\bar{V}$ is determined by such a subset $I$.  We will therefore denote the element of $\bar{V}$ given by the subset $I$ by $v_I$, keeping in mind that when $d$ is odd (resp. even), we have $v_I = v_{I'}$ if and only if $I = I'$ (resp. $I = I'$ or $I = \{1, ... , d\} \smallsetminus I'$).  We have an alternating bilinear pairing $\langle \cdot, \cdot \rangle : \bar{V} \times \bar{V} \to \ff_2$ given by setting $\langle v_I, v_{I'} \rangle$ to be the cardinality of the intersection $I \cap I'$ modulo $2$.  It is immediate to check that the action of $S_d$ respects this pairing, and therefore, each $\sigma$ acts on $\bar{V}$ as a symplectic automorphism of $\bar{V}$ with respect to $\langle \cdot, \cdot \rangle$.  We therefore write $\rho : S_d \to \Sp(\bar{V})$ to denote the standard representation of $S_d$, where $\Sp(\bar{V}) := \{\varphi \in \Aut(\bar{V}) \ | \ \langle ^{\varphi}v, ^{\varphi}\!w \rangle = \langle v, w \rangle, \ v, w \in \bar{V}\}$ is the group of \textit{symplectic automorphisms} with respect to the alternating form $\langle \cdot, \cdot \rangle$.  We denote the image of $\rho$ by $\mathfrak{S}_d \subseteq \Sp(\bar{V})$.  It is straightforward to verify directly from the construction above that $\rho$ is injective if and only if $d \neq 4$; meanwhile, we see by comparing orders using the formula for $\#\Sp_{2g}(\ff_2)$ found in \cite[\S III.6]{artin2011geometric} that $\rho$ is surjective if and only if $d \in \{3, 4, 6\}$.  So in particular, $\Sp(\bar{V}) = \mathfrak{S}_3 = \mathfrak{S}_4 \cong S_3$ when $g = 1$ and $\Sp(\bar{V}) = \mathfrak{S}_6 \cong S_6$ when $g = 2$.

Given a free $\zz_2$-module $V$ of rank $2g$ and an alternating $\zz_2$-bilinear pairing on $V$, on identifying $\bar{V}$ with $V / 2V$ and assuming that the pairing on $V$ induces the pairing $\langle \cdot, \cdot \rangle$ on $\bar{V}$, it is natural (for instance, in the abelian variety context discussed below) to consider the properties of subgroups $G \subseteq \Sp(V)$ whose images modulo $2$ coincide with $\mathfrak{S}_d$.  We will now present our main result, which classifies these liftings.

\subsection{Main result} \label{sec1.1}

In the below statement and for the rest of the paper, we adopt the following notations.  By mild abuse of notation, we write $\langle \cdot, \cdot \rangle$ both for the pairing on $V$ and for the pairing on $V / 2V$.  For each integer $n \geq 1$, we denote the reduction-modulo-$2^n$ map by $\varpi_{2^n} : \Sp(V) \to \Sp(V / 2^n V)$; if $m \geq n$ is another integer, we write $\varpi_{2^m \to 2^n} : \Sp(V / 2^m V) \twoheadrightarrow \Sp(V / 2^n V)$ for the induced reduction-modulo-$2^n$ map.  These maps are well known to be surjective (see for instance the proof of \cite[Lemma 5.16]{deligne1969irreducibility}).  For each $n \geq 1$, we write $\Gamma(2^n) \lhd \Sp(V)$ for the kernel of $\varpi_{2^n}$.

\begin{thm} \label{thm main}

With $V$ and $\mathfrak{S}_d \subseteq \Sp(V / 2V)$ as above for some fixed $d \geq 3$, let $G \subseteq \Sp(V)$ be a closed subgroup (under the $2$-adic topology), and assume that we have $\varpi_2(G) \supseteq \mathfrak{S}_d$.

a) If $d \geq 6$ is even, then $G$ coincides with the inverse image of $\varpi_2(G)$ under the reduction-modulo-$2$ map $\varpi_2$; it is thus an open subgroup of $\Sp(V)$ which contains $\Gamma(2) \lhd \Sp(V)$.

b) If $d \geq 5$ is odd, then $G$ contains $\Gamma(8) \lhd \Sp(V)$ and is therefore again an open subgroup of $\Sp(V)$.  If moreover we have $\varpi_2(G) = \mathfrak{S}_d$ and $G \not\supset \Gamma(4)$, then the order of $\varpi_8(G)$ is exactly $d! \cdot 2^{4g^2 - 2g}$, and all such $G$ are conjugate as subgroups of $\varpi_2^{-1}(\mathfrak{S}_d)$.

\end{thm}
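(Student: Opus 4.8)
The plan is to study $G$ through its images $G_n := \varpi_{2^n}(G) \subseteq \Sp(V/2^nV)$ and the action of $\mathfrak{S}_d$ on the graded pieces of the congruence filtration. A first reduction lets us assume $\varpi_2(G) = \mathfrak{S}_d$ exactly: if $\varpi_2(G) \supsetneq \mathfrak{S}_d$, replace $G$ by the closed subgroup $G \cap \varpi_2^{-1}(\mathfrak{S}_d)$, which still surjects onto $\mathfrak{S}_d$ modulo $2$, and observe that any conclusion of the form $G \cap \varpi_2^{-1}(\mathfrak{S}_d) \supseteq \Gamma(2^k)$ forces $G \supseteq \Gamma(2^k)$, which is exactly what both parts assert. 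The key structural input is then the $\mathfrak{S}_d$-module structure of each graded piece: for every $n \geq 1$, conjugation identifies $\Gamma(2^n)/\Gamma(2^{n+1})$ with the Lie algebra $\mathfrak{sp}_{2g}(\ff_2) = \{A \in \End(\bar V) : \langle Av, w \rangle + \langle v, Aw \rangle = 0\}$, on which $\mathfrak{S}_d$ acts through $\rho$ and the adjoint action; sending $A$ to the symmetric form $(v,w) \mapsto \langle Av, w \rangle$ exhibits this as the space of symmetric bilinear forms on $\bar V$, which sits in a short exact sequence of $\mathfrak{S}_d$-modules
$$0 \longrightarrow \wedge^2 \bar V \longrightarrow \mathfrak{sp}_{2g}(\ff_2) \longrightarrow \bar V \longrightarrow 0$$
(the alternating forms, and the ``diagonal'' quotient, which over $\ff_2$ is again the standard module). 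I would then determine the full submodule lattices of $\wedge^2 \bar V$ and $\bar V$ over $\mathfrak{S}_d \cong S_d$ — these genuinely depend on the parity of $d$ — and compute $H^1(\mathfrak{S}_d, M)$ and $H^2(\mathfrak{S}_d, M)$ for the relevant subquotients $M$, using that $\bar V$, $\wedge^2 \bar V$ and their relatives are cut out of the permutation modules $\ff_2[S_d/(S_k \times S_{d-k})]$, to which Shapiro's lemma applies.

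With these computations in place the argument becomes an induction up the congruence tower. Given $G_n$, the preimage $\varpi_{2^{n+1} \to 2^n}^{-1}(G_n) \subseteq \Sp(V/2^{n+1}V)$ is an extension of $G_n$ by $\Gamma(2^n)/\Gamma(2^{n+1}) \cong \mathfrak{sp}_{2g}(\ff_2)$; any $G_{n+1}$ reducing onto $G_n$ meets the kernel in a submodule $N_n$, and $G_{n+1}/N_n$ is a complement to $\mathfrak{sp}_{2g}(\ff_2)/N_n$ inside $\varpi_{2^{n+1} \to 2^n}^{-1}(G_n)/N_n$. Such a complement exists if and only if the extension class pushes to $0$ in $H^2(G_n, \mathfrak{sp}_{2g}(\ff_2)/N_n)$, and, when it exists, it is unique up to conjugacy by the kernel if and only if $H^1(G_n, \mathfrak{sp}_{2g}(\ff_2)/N_n) = 0$. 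Part (a) then comes down to showing that for $d \geq 6$ even the relevant extension class (and, inductively, that of each level over the preceding one) has nonzero image modulo every proper submodule, forcing $N_n = \mathfrak{sp}_{2g}(\ff_2)$ for all $n$, hence $G \supseteq \Gamma(2)$ and $G = \varpi_2^{-1}(\mathfrak{S}_d)$. For part (b) one shows instead that for $d \geq 5$ odd this class dies exactly modulo $\wedge^2 \bar V$ at the first two levels — producing a genuine small lift with $N_1 = N_2 = \wedge^2 \bar V$, whose existence I would also confirm concretely from the permutation action of $S_d$ on $\zz_2^d$ — but has nonzero image modulo every proper submodule from level $3$ onward, forcing $N_n = \mathfrak{sp}_{2g}(\ff_2)$ for all $n \geq 3$. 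This yields $G \supseteq \Gamma(8)$; counting through the filtration $\Gamma(8) \subseteq G \cap \Gamma(4) \subseteq G \cap \Gamma(2) \subseteq G$ gives $|\varpi_8(G)| = |\mathfrak{S}_d| \cdot |\wedge^2 \bar V|^2 = d! \cdot 2^{4g^2 - 2g}$; and the vanishing $H^1(\mathfrak{S}_d, \bar V) = 0$ (valid for $d$ odd), together with its analogue one level up, gives the conjugacy of all such $G$ inside $\varpi_2^{-1}(\mathfrak{S}_d)$.

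The main obstacle, and the part requiring real work rather than bookkeeping, is pinning down the extension classes themselves: it does not suffice to know the dimensions of the groups $H^2(\mathfrak{S}_d, \mathfrak{sp}_{2g}(\ff_2)/N)$ — one must identify which class the symplectic congruence extension $\Sp(V/2^{n+1}V) \to \Sp(V/2^nV)$ represents, so as to pin down exactly in which quotients it vanishes. For the odd case this is most delicate at the passage from level $2$ to level $3$, where the relevant extension sits over the already-twisted group $G_2$, an extension of $\mathfrak{S}_d$ by $\wedge^2 \bar V$ rather than $\mathfrak{S}_d$ itself, so one must run inflation--restriction over that group; showing the class is non-killable there is precisely what stops the tower at $\Gamma(8)$ instead of letting it descend further, and is the crux of why the odd case differs from the even case. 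A secondary technical point is the careful verification of the module identifications and submodule lattices in characteristic $2$, where $\wedge^2$, $\mathrm{Sym}^2$ and the Frobenius twist interact non-trivially, and the small cases $d \in \{5, 6\}$ (where $\mathfrak{S}_d$ may exhaust $\Sp(\bar V)$) may need to be checked directly.
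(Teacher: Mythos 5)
Your high-level plan coincides with the strategy actually implemented in the paper: work up the congruence filtration, identify each graded piece $\Gamma(2^n)/\Gamma(2^{n+1})$ with the $\mathfrak{S}_d$-module $\mathfrak{sp}(V/2V)$, exploit its two-step filtration (your short exact sequence $0 \to \wedge^2\bar V \to \mathfrak{sp} \to \bar V \to 0$ is exactly what Proposition~\ref{prop N} establishes, with $N^{(2^n)}$ of dimension $2g^2-g$ playing the role of $\wedge^2\bar V$ and the quotient $M^{(2^n)}\cong\bar V$ being the standard module), and decide at each level which lifts are possible. The paper's ``quasi-cocycle'' machinery in \S\ref{sec3}--\S\ref{sec4} is, in disguise, precisely the cohomological computation you are proposing: a quasi-cocycle in the sense of Definition~\ref{dfn quasi-cocycle} is a $1$-cochain whose twisted coboundary equals the extension $2$-cocycle of $\Sp(V/4V) \twoheadrightarrow \Sp(V/2V)$, so its existence or nonexistence records whether the pushed-forward extension class vanishes, and the statement that all quasi-cocycles yield conjugate subgroups (Theorem~\ref{thm mod 4}(b)) is exactly your $H^1(\mathfrak{S}_d,\bar V)=0$. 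The dictionary between your outline and the paper is therefore tight.

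The difficulty is that you have not supplied a proof; you have correctly located where the real content lies and then deferred every piece of it. You flag this yourself: ``pinning down the extension classes'' and showing the tower stops exactly at $\Gamma(8)$ is ``precisely'' the crux. That crux is the entire content of Theorem~\ref{thm mod 4} (nonexistence of a quasi-cocycle for even $d$, classification for odd $d$), Lemma~\ref{lemma contains N mod 8} and Proposition~\ref{prop tilde N} (the unique intermediate group $\widetilde N \subset \varpi_8(\Gamma(2))$, which is not simply $\wedge^2\bar V \times \wedge^2\bar V$ but a specific extension of $N^{(2)}$ by $N^{(4)}$ that you would need to identify), Lemmas~\ref{lemma a + c} and~\ref{lemma lift identities mod 8} (the nontrivial transvection identities needed to verify the Coxeter relations among the lifts $y_{\mathbf c,\sigma}$ modulo $\widetilde N$), Theorem~\ref{thm mod 8} (classification modulo $8$), and Proposition~\ref{prop mod 16} (the level-$16$ obstruction, which hinges on the explicit identity of Lemma~\ref{lemma -1} showing $(t_a^2 t_{t_b(a)}^2 t_b^2)^2 = 1$ in $\Sp(V)$, hence forcing $\widetilde\delta_{1,2,3}^2$ to fall in $\varpi_{16}(\Gamma(8))\smallsetminus N^{(8)}$). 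None of these computations appears in your proposal. The level $2\to 3$ step is especially delicate for exactly the reason you anticipate: the extension there sits over the twisted group $G_2$ rather than over $\mathfrak{S}_d$, so one cannot just read off $H^2(\mathfrak{S}_d,-)$; you would need to actually construct lifts compatible with the level-$4$ data and verify the braid/commutation relations among them, and you would need to discover that the failure mode changes at level $3\to4$. Likewise, the cohomology groups $H^1$ and $H^2$ you invoke are named but never computed; several of the relevant subquotients are not permutation modules, so Shapiro's lemma does not apply directly, and the characteristic-$2$ submodule lattice of $\ff_2[S_d/(S_2\times S_{d-2})]$ --- which you call a ``secondary technical point'' --- is in fact where the parity dichotomy (Theorem~\ref{thm main}(a) versus (b)) is decided.

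In short: the framework you describe is sound and maps cleanly onto the paper's argument, the cohomological reformulation is an elegant way to think about what the paper is doing, but as written the proposal replaces the computations that carry the proof with a list of IOUs, including the single most important one (why $\Gamma(8)$, rather than $\Gamma(4)$ or $\Gamma(16)$, is the floor in the odd case).
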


We will establish some preliminary results needed to prove the above theorem in \S\ref{sec2}.  We will then proceed to give a full proof of Theorem \ref{thm main} by classifying all possible modulo-$4$ images of subgroups $G$ satisfying the hypotheses of the theorem in \S\ref{sec3} (in particular, \S\ref{sec3.3} contains the proof of Theorem \ref{thm main}(a)), classifying all possible modulo-$8$ images in \S\ref{sec4}, and then showing that $G \supset \Gamma(8)$ in \S\ref{sec5}.  Finally, we will discuss an elementary application to even-degree polynomials with full Galois group in \S\ref{sec6}.  Before moving on to \S\ref{sec2}, however, we describe the main motivation for this problem.

\subsection{Application to abelian varieties} \label{S1.2}

The context which led the author to the problem of lifting symmetric groups to subgroups of $\Sp_{2g}(\zz_2)$ is as follows.  Let $K$ be a field of characteristic different from $2$, and let $f(x) \in K[x]$ be a squarefree polynomial of degree $d \geq 3$ with Galois group $\Gal(f)$; we denote the splitting field of $f$ by $L$.  We consider the hyperelliptic curve which is the smooth projective model $C$ of the affine curve given by the equation $y^2 = f(x)$, which has genus $g := \lfloor \frac{d - 1}{2} \rfloor$.  The $2g$-dimensional representation over $\ff_2$ of $\Gal(f) \subseteq S_d$ coming from the standard representation has an interpretation coming from the geometry of $C$, which we briefly summarize below (for further details, see \cite[\S2]{zarhin2002very} and \cite[\S5]{zarhin2001hyperelliptic}).

Associated to the smooth, projective, genus-$g$ curve $C$ is its \textit{$2$-adic Tate module $T_2$}, which is a free $\zz_2$-module of rank $2g$, and which comes endowed with an alternating $\zz_2$-bilinear pairing $e_2 : T_2 \times T_2 \to \zz_2$ called the \textit{Weil pairing} (here we are defining it with respect to the canonical principal polarization on the Jacobian).  The image modulo $2$ of $T_2$ is identified with the even-cardinality subsets of the roots $\alpha_1, ... , \alpha_d \in \bar{K}$ of the polynomial $f$, where subsets $I$ and $I'$ are identified if $I' = \{\alpha_1, ... , \alpha_d\} \smallsetminus I$ in the case that $d$ is even.  We may therefore identify the modulo-$2$ image of $T_2$ with the space $\bar{V}$ defined at the start of \S\ref{sec1.1} in an obvious way, and it is known that the $\ff_2$-bilinear pairing $\langle \cdot, \cdot \rangle$ on $\bar{V}$ described there is the one induced by the modulo-$2$ reduction of $e_2$.  There is a natural representation of the absolute Galois group $\Gal(\bar{K} / K)$ on $T_2$ which respects the pairing $e_2$ up to homothety (in fact, the Galois automorphisms fixing all $2$-power roots of unity in $\bar{K}$ act on $T_2$ as elements of $\Sp(T_2)$).  Its image in $\Aut(T_2)$, which we denote by $G_2$, is closed under the $2$-adic topology.  It is well known that the induced action of $\Gal(\bar{K} / K)$ on $T_2 / 2T_2$ factors through $\Gal(L / K) = \Gal(f) \subseteq S_d$, which acts on $T_2 / 2T_2$ through the standard representation $\rho : S_d \to \Sp(T_2 / 2T_2)$.  In particular, the modulo-$2$ image of $G_2 \cap \Sp(T_2)$ coincides with $\mathfrak{S}_d \subseteq \Sp(T_2 / 2T_2)$ when $\Gal(f) = S_d$.

It would be interesting to know what kind of subgroup $G_2 \cap \Sp(T_2) \subseteq \Sp(T_2)$ may appear as the image of this $2$-adic Galois representation intersected with $\Sp(T_2)$ in the case that our polynomial $f$ has full Galois group $S_d$.  In \cite{zarhin2002very}, Zarhin investigated $\ell$-adic Galois representations associated to hyperelliptic curves over fields $K$ of characteristic different from $2$, defined by equations of the form $y^2 = f(x)$, where $f(x) \in K[x]$ is a squarefree polynomial of degree $d \geq 5$ with large Galois group.  In particular, he showed (see \cite[Theorems 2.4, 2.5, and 2.6]{zarhin2002very}) that under certain hypotheses on the ground field $K$, the natural representation discussed above of the absolute Galois group $\Gal(\bar{K} / K)$ on the associated $2$-adic Tate module $T_2$ contains an open subgroup of $\Sp(T_2)$ provided that $\Gal(f)$ contains a large enough subgroup of $S_d$.  Theorem \ref{thm main} allows us to independently prove a stronger version of this statement in the case that $\Gal(f) = S_d$, as follows.

\begin{cor} \label{cor Zarhin}

Let $K$ be a field of characteristic different from $2$, and let $f(x) \in K[x]$ be a polynomial of degree $d \geq 5$ whose Galois group is the full $S_d$.  Let $\rho_2 : \Gal(\bar{K} / K) \to \Aut(T_2)$ denote the natural action of the absolute Galois group of $K$ on the $2$-adic Tate module $T_2$ of the Jacobian of the hyperelliptic curve over $K$ defined by the equation $y^2 = f(x)$.  Then the image $G_2$ of this representation contains the principal congruence subgroup $\Gamma(2) \subset \Sp(T_2)$ (resp. $\Gamma(8) \lhd \Sp(T_2)$) if $d$ is even (resp. if $d$ is odd); thus, $G_2$ contains an open subset of $\Sp(T_2)$.

\end{cor}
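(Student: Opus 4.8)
The plan is to apply Theorem~\ref{thm main} to the subgroup $G := G_2 \cap \Sp(T_2)$. The first point to settle is that this, and not $G_2$ itself, is the group to which the theorem applies: the representation $\rho_2$ respects the Weil pairing $e_2$ only up to the homothety given by the $2$-adic cyclotomic character $\chi$, so in general $G_2 \subseteq \GSp(T_2)$ but $G_2 \not\subseteq \Sp(T_2)$, and the elements of $G_2$ acting as genuine symplectic automorphisms of $(T_2, e_2)$ form the subgroup $G = G_2 \cap \Sp(T_2) = \rho_2(\Gal(\bar K / K(\mu_{2^\infty})))$ (those coming from automorphisms fixing all $2$-power roots of unity, as recalled in \S\ref{S1.2}). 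Since $\Gal(\bar K / K)$ is profinite and $\rho_2$ is continuous, $G_2$ is compact, hence closed in $\Aut(T_2)$; and $\Sp(T_2)$ is closed in $\GSp(T_2)$ as the kernel of the continuous multiplier character. Hence $G$ is a closed subgroup of $\Sp(T_2)$ under the $2$-adic topology.

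Next I would verify the remaining hypothesis $\varpi_2(G) \supseteq \mathfrak{S}_d$. Identify $T_2 / 2 T_2$ with $\bar V$ as in \S\ref{S1.2}, so that $\langle \cdot, \cdot \rangle$ is the reduction of $e_2$. The composite $\varpi_2 \circ \rho_2$ takes values in $\Sp(T_2 / 2 T_2)$ (the cyclotomic character being trivial modulo $2$) and, by the discussion in \S\ref{S1.2}, factors through the action of $\Gal(L / K) = \Gal(f) = S_d$ on $T_2/2T_2$ via the standard representation $\rho$; hence $\varpi_2(G_2) = \mathfrak{S}_d$, and, as recalled in \S\ref{S1.2}, the same holds for $G = G_2 \cap \Sp(T_2)$. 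Thus $G$ is a closed subgroup of $\Sp(T_2)$ with $\varpi_2(G) = \mathfrak{S}_d$, so it satisfies all the hypotheses of Theorem~\ref{thm main} for the given value of $d$.

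It then remains to invoke Theorem~\ref{thm main}. Since $d \geq 5$, either $d$ is even, in which case $d \geq 6$ and part (a) yields $G \supseteq \Gamma(2)$, or $d$ is odd, in which case part (b) yields $G \supseteq \Gamma(8)$. As $G \subseteq G_2$, in either case $G_2$ contains the asserted principal congruence subgroup. Finally, $\Gamma(2)$ and $\Gamma(8)$ are the preimages of the identity under the continuous maps $\varpi_2$ and $\varpi_8$ onto the finite groups $\Sp(T_2 / 2 T_2)$ and $\Sp(T_2 / 8 T_2)$, hence are open subgroups of $\Sp(T_2)$; so $G_2$ contains an open subset of $\Sp(T_2)$, and the corollary follows.

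Essentially all of the difficulty is concentrated in Theorem~\ref{thm main}; once that result is in hand the corollary is immediate. The only step that requires any genuine thought is the second one, where one must know that the modulo‑$2$ image of $G_2 \cap \Sp(T_2)$ is the \emph{full} image $\mathfrak{S}_d$ of the standard representation rather than the image of a proper subgroup — this is exactly the observation made in \S\ref{S1.2}, resting on the hypothesis $\Gal(f) = S_d$ together with the triviality of the cyclotomic character modulo $2$.
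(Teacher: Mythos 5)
Your proposal is correct and follows essentially the same route as the paper: identify $G = G_2 \cap \Sp(T_2)$ as a closed subgroup with $\varpi_2(G) = \mathfrak{S}_d$ (both you and the paper appeal to the discussion in \S\ref{S1.2} for this last point) and then invoke Theorem \ref{thm main}(a) or (b) according to the parity of $d$. You are slightly more explicit than the paper in checking that $G$ is closed and in distinguishing $\varpi_2(G)$ from $\varpi_2(G_2)$ (the latter being where the triviality of the cyclotomic character modulo $2$ enters), but the substance of the argument is identical.
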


\begin{proof}

As was explained in the above discussion, the action of $\rho_2 : \Gal(\bar{K} / K) \to \Aut(T_2)$ composed with the reduced-modulo-$2$ map $\Sp(T_2) \twoheadrightarrow \Sp(T_2 / 2T_2)$ factors through $\Gal(f) \subseteq S_d$, which acts on the $2g$-dimensional $\ff_2$-vector space $T_2 / 2T_2$ through the standard representation of $S_d$.  Therefore, if $\Gal(f)$ is the full $S_d$, the image modulo $2$ of $G_2$ coincides with $\mathfrak{S}_d \subseteq \Sp(T_2 / 2T_2)$.  Then by Theorem \ref{thm main}, the intersection $G_2 \cap \Sp(T_2)$ contains the full subgroup $\Gamma(2) \lhd \Sp(T_2)$ (resp. $\Gamma(8) \lhd \Sp(T_2)$) if $d$ is even (resp. if $d$ is odd); in particular, $G_2 \cap \Sp(T_2) \subseteq \Sp(T_2)$ is open.

\end{proof}

\begin{rmk}

a) A variant on the above corollary applies to the $2$-adic Galois representation associated to any $g$-dimensional abelian variety over $K$ such that the modulo-$2$ action contains $\mathfrak{S}_{2g + 1}$ or $\mathfrak{S}_{2g + 2}$: this says that the restriction of the $2$-adic image of Galois to the group of symplectic automorphisms of the $2$-adic Tate module is very large, thus providing a strengthening of special cases of \cite[Theorems 2.2 and 2.3]{zarhin2002very}.

b) The results of Zarhin cited above also suggest that the conclusions given by Theorem \ref{thm main} might still hold under the weaker hypotheses that $\varpi_2(G)$ contains the image of the alternating subgroup $A_d$.  Unfortunately, our methods, which heavily involve generating images of $G$ using lifts of transpositions in $\mathfrak{S}_d$, do not seem easily adaptable to providing an answer to this problem.

\end{rmk}

\begin{rmk}

Viewing liftings of images of $\mathfrak{S}_d$ in the context of abelian varieties allows us to see that for $d = 3, 4$, we do not get the conclusion of Theorem \ref{thm main}(b) in general.  Indeed, let $E$ be the elliptic curve defined by the equation $y^2 = f(x) := x^3 - 2$.  Since the cubic $f(x)$ has full Galois group, we get that the image $G \subset \Aut(T_2)$ of the natural $2$-adic Galois action on the $2$-adic Tate module $T_2$ satisfies $\varpi_2(G) = \mathfrak{S}_3$.  However, $E$ is well known to have complex multiplication, implying that $G$ contains an abelian subgroup of finite index in $G$ (see \cite[Corollary 2 in \S4]{serre1968good}), and so $G \cap \Sp(T_2) \subset \Sp(T_2)$ is certainly not open under the $2$-adic topology.

\end{rmk}

\subsection{Acknowledgments} \label{S1.3}

The author would like to thank Aaron Landesman (whose article \cite{landesman2017lifting}, co-authored with A. Swaminathan, J. Tao, and Y. Xu, helped to inspire this work) for discussions and computations which were helpful in approaching this problem.  The author is also very grateful to Dan Collins for providing computational evidence of this paper's modulo-$8$ and modulo-$16$ results in the key case that $g = 2$ and $d = 5$; this was instrumental in guiding the author towards the correct conclusions.

\section{Preliminaries} \label{sec2}

We retain all of the notation from \S\ref{sec1}; in particular, we let $V$ be a free $\zz_2$-module of rank $2g$ equipped with a symplectic form $\langle \cdot, \cdot \rangle$ and let $G \subseteq \Sp(V)$ be a closed subgroup.  Given any element $a \in V$, we define the \textit{transvection} with respect to $a$ to be the linear automorphism $t_a \in \Sp(V)$ given by $v \mapsto v + \langle v, a \rangle a$.  We define transvections similarly over the quotients $V / 2^n V$, using the same notation $t_a \in \Sp(V / 2^n V)$ for $a \in V / 2^n V$.  When $a = v_I \in V / 2V$ corresponds to an even-cardinality subset $I \subseteq \{1, ... , d\}$, we simply write $t_I$ for $t_a$.  Below we will freely use the easily-verifiable fact that the image in $\Sp(V / 2V)$ of any transposition $(i, j) \in S_d$ is the transvection $t_{\{i, j\}}$.  (In several places below we mildly abuse notation by expressing elements of $\End(V / 2^{n + 1}V)$ for some $n \geq 1$ which are divisible by $2^n$ as $2^n$ times some element of $\End(V / 2V)$.)

We now fix, for the rest of this paper, a choice of vectors $a_{i, j} = a_{j, i} \in V$ for $1 \leq i < j \leq 2g + 1$ which satisfy $\varpi_2(a_{i, j}) = v_{\{i, j\}}$.

In order to prove Theorem \ref{thm main}, it clearly suffices to prove the statement under the assumption that $\varpi_2(G) = \mathfrak{S}_d$ for $d = 2g + 1$ or $d = 2g + 2$.  We therefore assume that $\varpi_2(G) = \mathfrak{S}_d$ for some fixed $d \in \{2g + 1, 2g + 2\}$.  To avoid cumbersome arguments, we also assume for the rest of the paper that $d \neq 4$, so that the symmetric group $S_d$ may be identified with its image $\mathfrak{S}_d \in \Sp(V / 2V)$ (when $d = 4$, we are simply reduced to the $d = 3$ case anyway because we have $\mathfrak{S}_4 = \mathfrak{S}_3$).  We begin by looking more closely at the subgroups $\varpi_{2^{n + 1}}(\Gamma(2^n)) \lhd \Sp(V / 2^{n + 1}V)$ for $n \geq 1$.

\subsection{The groups $\varpi_{2^{n + 1}}(\Gamma(2^n))$} \label{sec2.1}

We now recall the following description of $\varpi_{2^{n + 1}}(\Gamma(2^n))$ for $n \geq 1$.  Let $\mathfrak{sp}(V / 2V)$ denote the $\ff_2$-vector space of endomorphisms of $V / 2V$ generated by the elements of the form $T_a := t_a - 1$ for all $a \in V$ (the group variety $\mathfrak{sp}$ gives the symplectic Lie algebra).  It is well known (see \cite[\S A.3]{kirillov2008introduction}) that $\mathfrak{sp}(V / 2V)$ has dimension $2g^2 + g$ and that the map $\mathfrak{sp}(V / 2V) \to \varpi_{2^{n + 1}}(\Gamma(2^n))$ given by $X \mapsto 1 + 2^n X$ is an isomorphism.  Therefore, the multiplicative group $\varpi_{2^{n + 1}}(\Gamma(2^n))$ is an elementary abelian $2$-group of rank $2g^2 + g$.  More precisely, we have the following.

\begin{prop} \label{prop 2-group}

Fix an integer $n \geq 1$.

a) A basis for the elementary abelian $2$-group $\varpi_{2^{n + 1}}(\Gamma(2^n))$ is given by $\{\varpi_{2^{n + 1}}(t_{a_{i, j}}^{2^n})\}_{1 \leq i < j \leq 2g + 1}$.

b) Let $u \in \varpi_{2^{n + 1}}(G)$ be any element, and let $\sigma = \varpi_{2^{n + 1} \to 2}(u) \in \mathfrak{S}_d$ be its image viewed as a permutation in $S_d$.  Then conjugation by $u$ permutes the elements $\varpi_{2^{n + 1}}(t_{a_{i, j}}^{2^n}) \in \varpi_{2^{n + 1}}(\Gamma(2^n))$ by the formula $u \varpi_{2^{n + 1}}(t_{a_{i, j}}^{2^n}) u^{-1} = \varpi_{2^{n + 1}}(t_{a_{\sigma(i), \sigma(j)}}^{2^n})$.

\end{prop}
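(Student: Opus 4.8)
My approach to Proposition~\ref{prop 2-group} is to exploit the explicit isomorphism $\mathfrak{sp}(V/2V) \xrightarrow{\sim} \varpi_{2^{n+1}}(\Gamma(2^n))$, $X \mapsto 1 + 2^n X$, recalled just before the statement, and to transfer both claims across it. Under this isomorphism, the element $\varpi_{2^{n+1}}(t_{a_{i,j}}^{2^n})$ corresponds to a specific element of $\mathfrak{sp}(V/2V)$, which I would compute explicitly: writing $t_a = 1 + T_a$ with $T_a(v) = \langle v, a\rangle a$, one checks $T_a^2(v) = \langle v,a\rangle \langle a,a\rangle a = 0$ since the form is alternating, so $t_a^2 = 1 + 2T_a$, and inductively $t_a^{2^n} = 1 + 2^n T_a$ exactly (not just modulo $2^{n+1}$). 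Hence $\varpi_{2^{n+1}}(t_{a_{i,j}}^{2^n}) = 1 + 2^n \bar{T}_{a_{i,j}}$, where $\bar{T}_{a_{i,j}}$ is the reduction of $T_{a_{i,j}}$ modulo $2$, which depends only on $\varpi_2(a_{i,j}) = v_{\{i,j\}}$; that is, $\bar{T}_{a_{i,j}} = T_{v_{\{i,j\}}} \in \mathfrak{sp}(V/2V)$. So part~(a) reduces to the purely $\ff_2$-linear claim that $\{T_{v_{\{i,j\}}}\}_{1 \le i < j \le 2g+1}$ is a basis of $\mathfrak{sp}(V/2V)$.

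For part~(a), since $\mathfrak{sp}(V/2V)$ has dimension $2g^2 + g = \binom{2g+1}{2}$ and this is exactly the number of pairs $\{i,j\}$ with $1 \le i < j \le 2g+1$, it suffices to prove either spanning or linear independence. I would prove spanning: the $T_{v_I}$ for even-cardinality $I$ span $\mathfrak{sp}(V/2V)$ by definition (every $T_a$ is such a $T_{v_I}$ after reduction, and the $T_a$ generate), so it is enough to write $T_{v_I}$ for an arbitrary even $I \subseteq \{1,\dots,d\}$ as an $\ff_2$-combination of the $T_{v_{\{i,j\}}}$ with indices in $\{1,\dots,2g+1\}$. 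For the bilinear identity $T_{v + w} = T_v + T_w + T'$, where $T'(x) = \langle x, w\rangle v + \langle x, v\rangle w$ is the "polarization" term, a short computation shows $T'$ is itself a sum of $T$'s of the form $T_{v_{\{a,b\}}}$ when $v,w$ are standard pair-vectors; iterating lets me reduce any $v_I$ to a telescoping sum of adjacent transpositions $v_{\{1,2\}}, v_{\{2,3\}},\dots$, and when $d = 2g+2$ one uses $v_{\{1,\dots,d\}} = 0$ in $\bar V$ to eliminate the index $2g+2$. This is the step requiring the most care, but it is a finite, concrete linear-algebra manipulation.

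Part~(b) is then easy given the framework: conjugation by any $u \in \Sp(V/2^{n+1}V)$ sends $1 + 2^n X$ to $1 + 2^n (\bar u X \bar u^{-1})$, where $\bar u = \varpi_{2^{n+1}\to 2}(u)$ acts on $\mathfrak{sp}(V/2V)$ by the adjoint (conjugation) action; so it suffices to show $\bar u \bar{T}_{a_{i,j}} \bar u^{-1} = \bar{T}_{a_{\sigma(i),\sigma(j)}}$ when $\bar u = \sigma \in \mathfrak{S}_d$. This follows from the general transvection identity $\varphi\, t_a\, \varphi^{-1} = t_{\varphi(a)}$ for $\varphi \in \Sp$ (immediate from the definition of $t_a$ and the fact that $\varphi$ preserves the form), applied with $a = v_{\{i,j\}}$ and $\varphi = \sigma$: since $\sigma$ acts on $\bar V$ by $^\sigma v_{\{i,j\}} = v_{\{\sigma(i),\sigma(j)\}}$, we get $\sigma\, t_{v_{\{i,j\}}}\, \sigma^{-1} = t_{v_{\{\sigma(i),\sigma(j)\}}}$, hence the same for the associated $T$'s, hence the stated formula after multiplying by $2^n$ and lifting. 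The only subtlety worth flagging is bookkeeping when $d = 2g+2$: a permutation $\sigma \in S_{2g+2}$ may move one of the indices $1,\dots,2g+1$ to $2g+2$, but since $v_{\{\sigma(i),\sigma(j)\}}$ is still a well-defined element of $\bar V$ and is still expressible in the chosen basis via part~(a), the formula holds as stated once one interprets $t_{a_{\sigma(i),\sigma(j)}}$ through that identification — I would add a sentence making this explicit.
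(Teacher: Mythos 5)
Your proposal is correct and takes essentially the same route as the paper: both transfer part (a) to the claim that $\{T_{\{i,j\}}\}_{1\le i<j\le 2g+1}$ spans the $(2g^2+g)$-dimensional space $\mathfrak{sp}(V/2V)$ via the isomorphism $X\mapsto 1+2^nX$ together with a dimension count, and both deduce part (b) from the transvection conjugation identity $\varphi t_a\varphi^{-1}=t_{\varphi(a)}$ together with the observation that $\varpi_{2^{n+1}}(t_{a_{i,j}}^{2^n})$ depends only on $\varpi_2(a_{i,j})$. The paper packages the spanning step as Lemma \ref{lemma compliment} (the closed formula $T_I=\sum_{i<j\in I}T_{\{i,j\}}$), which is exactly what your iterated polarization computes, and it handles the $d=2g+2$ index $2g+2$ by passing to the complementary subset rather than by the remark you propose to add; these are cosmetic variations on the same argument.
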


\begin{proof}

Note that $\varpi_{2^{n + 1}}(t_{a_{i, j}}^{2^n}) = 1 + 2^n T_{\{i, j\}}$ regardless of our choice of $a_{i, j}$.  Therefore, by the isomorphism $\mathfrak{sp}(V / 2V) \stackrel{\sim}{\to} \varpi_{2^{n + 1}}(\Gamma(2^n))$ given in the above discussion, in order to prove part (a) it suffices to show that the elements $T_{\{i, j\}}$ for $1 \leq i < j \leq 2g + 1$ form a basis for $\mathfrak{sp}(V / 2V)$.  Since the cardinality of this set is $2g^2 + g$, which is the dimension of $\mathfrak{sp}(V / 2V)$, we only need to show that the set spans $\mathfrak{sp}(V / 2V)$.  This is already given by \cite[Lemma 2.2.1]{brumer2018large}, but it is convenient to provide a different proof here.  Let $v_I \in V / 2V$ be any vector, where $I \subseteq \{1, ... , d\}$ is an even-cardinality subset.  If $d$ is even and $2g + 2 \in I$, we may replace $I$ with its complement in $\{1, ... , d\}$ without changing $v_I$; we therefore assume that $I \subset \{1, ... , 2g + 1\}$.  Lemma \ref{lemma compliment} below then says that $T_I$ can be expressed as a sum of elements in $\{T_{\{i, j\}}\}_{1 \leq i < j \leq 2g + 1}$.  Since $\mathfrak{sp}(V / 2V)$ is generated by such elements, this set does span $\mathfrak{sp}(V / 2V)$, as desired.

To prove part (b), we first observe that we have $u \varpi_{2^{n + 1}}(t^{2^n}_{a_{i, j}}) u^{-1} = t_{u(\varpi_{2^{n + 1}}(a_{i, j}))}^{2^n}$.  Since the modulo-$2$ image of $u(\varpi_{2^{n + 1}}(a_{i, j}))$ is clearly $\sigma(v_{\{i, j\}}) = v_{\{\sigma(i), \sigma(j)\}}$, we have $1 + 2^n T_{\{\sigma(i), \sigma(j)\}} = t_{u(\varpi_{2^{n + 1}}(a_{i, j}))}^{2^n}$, whence the statement.

\end{proof}

\begin{lemma} \label{lemma compliment}

For any even-cardinality subset $I \subseteq \{1, ... , d\}$, we have the identity 
\begin{equation} \label{eq identity1}
T_I = \sum_{i, j \in I; \ i < j} T_{\{i, j\}}.
\end{equation}

\end{lemma}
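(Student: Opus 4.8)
The plan is to verify the identity \eqref{eq identity1} by evaluating both of its sides, which are endomorphisms of $V/2V = \bar V$, on the spanning set of $\bar V$ consisting of the vectors $v_K$ indexed by even-cardinality subsets $K \subseteq \{1, \dots, d\}$. Since every element of $W_0$ has the form $\sum_{i \in K} e_i = v_K$ for some even-cardinality $K$, these $v_K$ indeed span $\bar V$, so it suffices to check that $T_I(v_K)$ equals $\bigl(\sum_{i, j \in I,\ i < j} T_{\{i, j\}}\bigr)(v_K)$ for every such $K$.

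For the left-hand side, the defining formula $T_a(v) = \langle v, a \rangle a$ gives $T_I(v_K) = \langle v_K, v_I \rangle\, v_I = (|I \cap K| \bmod 2)\, v_I$. For the right-hand side, the same formula yields $\sum_{i < j,\ i, j \in I} \langle v_K, v_{\{i, j\}} \rangle\, v_{\{i, j\}}$; since $\langle v_K, v_{\{i, j\}} \rangle = |\{i, j\} \cap K| \bmod 2$ is nonzero exactly when precisely one of $i, j$ lies in $K$, and since $I \cap K$ and $I \smallsetminus K$ are disjoint, this sum rewrites cleanly as $\sum_{i \in I \cap K} \sum_{j \in I \smallsetminus K} v_{\{i, j\}}$, each relevant unordered pair appearing once.

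It then remains to identify this double sum with $(|I \cap K| \bmod 2)\, v_I$. Lifting to $W_0$ and writing $v_{\{i, j\}} = e_i + e_j$, the double sum becomes $|I \smallsetminus K| \sum_{i \in I \cap K} e_i + |I \cap K| \sum_{j \in I \smallsetminus K} e_j$, read over $\ff_2$. Because $|I| = |I \cap K| + |I \smallsetminus K|$ is even, the two cardinalities share a common parity $\epsilon$: if $\epsilon = 0$ the sum vanishes, as does $(|I \cap K| \bmod 2)\, v_I$; if $\epsilon = 1$ the sum collapses to $\sum_{i \in I} e_i = v_I$, again matching $(|I \cap K| \bmod 2)\, v_I$. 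All these equalities already hold in $W_0$ and hence descend to $\bar V = W_0 / \langle w_0 \rangle$ in the even case, so no separate attention to the complement identification is needed. This gives agreement on every $v_K$, and the identity follows.

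The computation is entirely elementary; the only point needing a little care — and the closest thing to an obstacle — is the bookkeeping in the last step: recognizing that the surviving terms of $\sum_{i < j} T_{\{i, j\}}(v_K)$ are exactly the "mixed" pairs with one endpoint in $K$ and one outside, and that the constraint $2 \mid |I|$ is precisely what forces the two partial sums $\sum_{I \cap K} e_i$ and $\sum_{I \smallsetminus K} e_j$ to recombine into either $0$ or the full vector $v_I$.
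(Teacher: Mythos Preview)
Your proof is correct and follows essentially the same approach as the paper: both verify the identity by evaluating each side on a spanning set of $\bar V$ using the formula $T_a(v) = \langle v, a \rangle a$. The only cosmetic difference is that the paper restricts to the spanning set $\{v_J : |J| = 2\}$ of two-element subsets, which slightly shortens the bookkeeping, whereas you work with all even-cardinality $v_K$; the parity argument you give in the final step is exactly the computation the paper leaves to the reader.
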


\begin{proof}

This can be verified directly using the formula $T_{\{i, j\}} : v_J \mapsto \#(J \cap \{i, j\})v_{\{i, j\}}$ for elements $J \in V / 2V$ corresponding to $2$-element subsets of $\{1, ... , d\}$ (which in turn clearly generate $V / 2V$).

\end{proof}

We can now show that we may reduce our problem to a study of certain subgroups of $\varpi_{4 \to 2}^{-1}(\mathfrak{S}_d)$.

\begin{lemma} \label{lemma sufficient}

If $H \subseteq \Sp(V)$ is a closed subgroup satisfying $\varpi_{2^{n + 1}}(H) \supseteq \varpi_{2^{n + 1}}(\Gamma(2^n))$ for some $n \geq 1$, then we have $H \supseteq \Gamma(2^n)$.  Therefore, in order to prove Theorem \ref{thm main}, it suffices to show that if $d$ is even (resp. if $d$ is odd), we have $\varpi_4(G) \supset \varpi_4(\Gamma(2))$ (resp. $\varpi_{16}(G) \supset \varpi_{16}(\Gamma(8))$), and that in the $d$ odd case, $\varpi_8(G) \not\supset \varpi_8(\Gamma(4))$ implies that $\varpi_8(G)$ has order $d! \cdot 2^{4g^2 - 2g}$.

\end{lemma}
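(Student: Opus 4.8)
The plan is to prove the two separate assertions of Lemma \ref{lemma sufficient} in turn. For the first assertion, suppose $H \subseteq \Sp(V)$ is closed with $\varpi_{2^{n+1}}(H) \supseteq \varpi_{2^{n+1}}(\Gamma(2^n))$ for some $n \geq 1$. I want to upgrade this one congruence level at a time: I claim that for every $m \geq n$ we have $\varpi_{2^{m+1}}(H) \supseteq \varpi_{2^{m+1}}(\Gamma(2^m))$. The base case $m = n$ is the hypothesis. For the inductive step, the key observation is that $\varpi_{2^{m+2} \to 2^{m+1}}$ carries $\varpi_{2^{m+2}}(\Gamma(2^m))$ onto $\varpi_{2^{m+1}}(\Gamma(2^m))$, so given the inductive hypothesis one knows $\varpi_{2^{m+2}}(H)$ surjects onto $\varpi_{2^{m+1}}(\Gamma(2^m))$; it remains to produce the kernel of this surjection, namely $\varpi_{2^{m+2}}(\Gamma(2^{m+1}))$, inside $\varpi_{2^{m+2}}(H)$. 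Here I use that $H$ is a group together with the description of $\varpi_{2^{m+2}}(\Gamma(2^m))$ as an abelian group: elements of $\varpi_{2^{m+1}}(\Gamma(2^m))$ lift, via any $h \in H$ projecting into $\varpi_{2^{m+1}}(\Gamma(2^m))$, to elements of $\varpi_{2^{m+2}}(H)$ of the form $1 + 2^m X + 2^{m+1} Y$, and squaring (or taking suitable products/commutators of) such elements produces the elements $1 + 2^{m+1} Z$ of $\varpi_{2^{m+2}}(\Gamma(2^{m+1}))$ — concretely, $(1 + 2^m X + \cdots)^2 = 1 + 2^{m+1} X + 2^{2m}(\cdots)$, and since $2m \geq m+2$ for $m \geq 2$ (the case $m = n = 1$ being handled by a direct check that $2^{2} X^2$ lands where needed modulo $2^{m+2}$), these squares already generate all of $\varpi_{2^{m+2}}(\Gamma(2^{m+1}))$ as $X$ ranges over $\mathfrak{sp}(V/2V)$. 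Having established $\varpi_{2^{m+1}}(H) \supseteq \varpi_{2^{m+1}}(\Gamma(2^m))$ for all $m \geq n$, one concludes $H \supseteq \Gamma(2^n)$ by a standard compactness/completeness argument: $\Gamma(2^n)$ is the inverse limit of the $\varpi_{2^{m+1}}(\Gamma(2^m)) \cdot (\text{stuff})$, and since $H$ is closed and its image at every finite level $2^{m+1}$ contains $\varpi_{2^{m+1}}(\Gamma(2^m))$, an element $\gamma \in \Gamma(2^n)$ is a limit of elements of $H$, hence lies in $H$.

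For the second assertion — the reduction of Theorem \ref{thm main} — I would simply assemble the pieces. If $d$ is even and $\varpi_4(G) \supseteq \varpi_4(\Gamma(2))$, then the first part of the lemma (with $n = 1$) gives $G \supseteq \Gamma(2)$, which combined with $\varpi_2(G) = \mathfrak{S}_d$ forces $G = \varpi_2^{-1}(\mathfrak{S}_d)$, the full inverse image, which is open and contains $\Gamma(2)$; this is exactly Theorem \ref{thm main}(a). If $d$ is odd and $\varpi_{16}(G) \supseteq \varpi_{16}(\Gamma(8))$, then the first part with $n = 8$ gives $G \supseteq \Gamma(8)$, so $G$ is open; this is the first sentence of Theorem \ref{thm main}(b). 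The remaining claim in part (b), that when $\varpi_2(G) = \mathfrak{S}_d$ and $G \not\supset \Gamma(4)$ the order of $\varpi_8(G)$ equals $d! \cdot 2^{4g^2 - 2g}$, follows once we know (from the later sections, as stated in the reduction) that under these hypotheses $\varpi_8(G) \not\supset \varpi_8(\Gamma(4))$ together with a determination of the precise index; and the conjugacy statement for such $G$ reduces to a conjugacy statement at level $8$. So the content here is purely bookkeeping: observing that $G \supseteq \Gamma(2^n)$ together with the fixed modulo-$2$ image pins $G$ down (in the even case) or makes it open (in the odd case), and that the order computation and conjugacy are controlled at the finite level $\varpi_8$.

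The main obstacle is the inductive step in the first assertion, specifically handling the boundary case $m = n = 1$ where $2m = m + 1$ rather than $2m \geq m + 2$: here squaring a general lift $1 + 2 X + 4 Y$ gives $1 + 4X + 4X^2 + 8(\cdots) \equiv 1 + 4(X + X^2) \pmod 8$, so one does not immediately get $1 + 4X$ for arbitrary $X$. One must check that the set $\{X + X^2 : X \in \mathfrak{sp}(V/2V)\}$, together with products of such squared elements and the elements $1 + 4Y$ already available from commutators, still generates $\varpi_8(\Gamma(4))$ as a group — equivalently that $\{X + X^2\}$ spans $\mathfrak{sp}(V/2V)$ over $\ff_2$ modulo the subspace one already controls. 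This is a finite linear-algebra check in the symplectic Lie algebra and should go through, but it is the one spot requiring genuine care rather than routine manipulation. Alternatively, one can sidestep it by noting that the hypotheses of Theorem \ref{thm main} always supply additional structure (the symmetric group image, hence many transvections), so one never actually needs the lemma in the bare form at $n = 1$ without that extra input; but stating and proving it cleanly as above is preferable.
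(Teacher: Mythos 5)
Your approach is structurally the same as the paper's — induct on the congruence level and lift by squaring — but you leave a genuine gap at precisely the place you flag. When you square a general lift $1 + 2^{m}X + 2^{m+1}Y$ of an arbitrary element of $\varpi_{2^{m+1}}(\Gamma(2^m))$, the term $2^{2m}X^2$ survives modulo $2^{m+2}$ for small $m$, and you are left needing the unproven claim that $\{X + X^2 : X \in \mathfrak{sp}(V/2V)\}$ spans. You write that this ``should go through'' but do not establish it; as written, the proof is incomplete.

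The paper avoids the issue entirely by squaring only the specific lifts of the \emph{basis} elements furnished by Proposition~\ref{prop 2-group}(a), not arbitrary elements. The inductive hypothesis gives $1 + 2^{m-1}T_{a_{i,j}} = \varpi_{2^m}(t_{a_{i,j}}^{2^{m-1}}) \in \varpi_{2^m}(H)$ for all $i < j$; lifting each of these to some $1 + 2^{m-1}T_{a_{i,j}} + 2^m B_{i,j} \in \varpi_{2^{m+1}}(H)$ and squaring produces $1 + 2^m T_{a_{i,j}}$ \emph{exactly} modulo $2^{m+1}$, with no error term, because $T_{a_{i,j}}^2 = 0$: one has $T_a(v) = \langle v, a\rangle a$, so $T_a^2(v) = \langle v,a\rangle\langle a,a\rangle a = 0$ since the form is alternating. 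Since the elements $1 + 2^m T_{a_{i,j}}$ generate $\varpi_{2^{m+1}}(\Gamma(2^m))$ by Proposition~\ref{prop 2-group}(a), the inductive step closes for every $m$, with no special handling of small $m$. Incidentally, this same observation is exactly what would rescue your proposed linear-algebra check: $T_{a_{i,j}} + T_{a_{i,j}}^2 = T_{a_{i,j}}$, so the set $\{X + X^2\}$ does contain a spanning set — but then you should have noticed that restricting to those $X$ from the outset makes the whole detour unnecessary. (One small slip in the bookkeeping paragraph: you invoke the first part ``with $n = 8$,'' where you mean $n = 3$, i.e.\ $2^n = 8$.)
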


\begin{proof}

Since $H$ is closed, it is enough to show that $\varpi_{2^{m + 1}}(H) \supseteq \varpi_{2^{m + 1}}(\Gamma(2^m))$ for all $m \geq n$.  This statement is true for $m = n$ by hypothesis; assume that it holds for $m - 1 \geq n$.  Then the group $\varpi_{2^m}(H)$ contains the elements $1 + 2^{m - 1} T_{a_{i, j}} \in \Sp(V / 2^m V)$ for $1 \leq i < j \leq 2g + 1$.  These each lift to elements $1 + 2^{m - 1} T_{a_{i, j}} + 2^m B_{i, j} \in \varpi_{2^{m + 1}}(H)$ for some $B_{i, j} \in \End(V / 2V)$.  The squares of these elements are $1 + 2^m T_{a_{i, j}} \in \varpi_{2^{m + 1}}(H)$, which by Proposition \ref{prop 2-group}(a) generate $\varpi_{2^{m + 1}}(\Gamma(2^m))$, so we have proved the statement by induction.  (See also the proofs of \cite[Lemma 4]{cullinan2017jacobians} and \cite[Lemma 6]{landesman2017lifting}.)

\end{proof}

In light of Proposition \ref{prop 2-group}, given a fixed $n \geq 1$, for $1 \leq i < j \leq d$, we write $[i, j] = [j, i]$ for the element $\varpi_{2^{n + 1}}(t_{a_{i, j}}^{2^n}) \in \varpi_{2^{n + 1}}(\Gamma(2^n))$ (noting that it does not depend on our choice of $a_{i, j}$'s).  Proposition \ref{prop 2-group}(b) says that $S_d$ acts on $\varpi_{2^{n + 1}}(\Gamma(2^n))$ by permuting this set of generators as $^{\sigma}[i, j] = [\sigma(i), \sigma(j)]$.  Since $\varpi_{2^{n + 1}}(\Gamma(2^n))$ is elementary abelian, we treat it as a vector space over $\ff_2$ and use additive notation when expressing its elements in terms of the $[i, j]$'s.  For distinct $i, j, k \in \{1, ... , d\}$, we write $\Delta_{i, j, k}$ to denote the element $[i, j] + [i, k] + [j, k]$.  (In order to avoid excessive clutter, we suppress any indication of $n$ in our notation for the elements $[i, j]$ and $\Delta_{i, j, k}$ but always make sure it is clear which space $\varpi_{2^{n + 1}}(\Gamma(2^n))$ they belong to.)

We remark that each $\ff_2$-space $\varpi_{2^{n + 1}}(\Gamma(2^n))$ has a basis given by $\{[i, j]\}_{1 \leq i < j \leq 2g + 1}$ by Proposition \ref{prop 2-group}(a) and therefore has a (unique) $S_d$-invariant subspace of dimension $1$, namely the subspace consisting of all elements $\sum_{1 \leq i < j \leq 2g + 1} c_{i, j}[i, j] \in \varpi_{2^{n + 1}}(\Gamma(2^n))$ with $\sum_{1 \leq i < j \leq 2g + 1} c_{i, j} = 0$.  We denote this subspace by $\varpi_{2^{n + 1}}(\Gamma(2^n))_0$.

\subsection{The subspaces $N^{(2^n)} \subset \varpi_{2^{n + 1}}(\Gamma(2^n))$} \label{sec2.2}

We now define an $S_d$-invariant subspace of each $\varpi_{2^{n + 1}}(\Gamma(2^n)) = \bigoplus_{i < j} \ff_2 [i, j]$ which will turn out to be very crucial.

\begin{dfn}

Fix an integer $n \geq 1$.  We define $N^{(2^n)} \subset \varpi_{2^{n + 1}}(\Gamma(2^n))$ to be the subset consisting of all elements $\sum_{1 \leq i < j \leq 2g + 1} c_{i, j} [i, j]$ (with $c_{i, j} = c_{j, i} \in \ff_2$) satisfying the following property: for each $i \in \{1, ... , 2g + 1\}$, we have $\sum_{j \in \{1, ... , 2g + 1\} \smallsetminus \{i\}} c_{i, j} = 0$.

\end{dfn}

It is easy to see that $N^{(2^n)}$ is a subspace of $\varpi_{2^{n + 1}}(\Gamma(2^n))$; we now present a proposition describing some of its properties.

\begin{prop} \label{prop N}

For each $n \geq 1$, the subspace $N^{(2^n)} \subset \varpi_{2^{n + 1}}(\Gamma(2^n))$ satisfies the following.

a) The subspace $N^{(2^n)} \subset \varpi_{2^{n + 1}}(\Gamma(2^n))$ is generated by the elements $\Delta_{i, j, k}$ defined above over all distinct $i, j, k \in \{1, ... , 2g + 1\}$.  A full set of linear relations amongst these generators is given by $\Delta_{i, j, k} + \Delta_{i, j, l} + \Delta_{i, k, l} + \Delta_{j, k, l} = 0$ for all distinct $i, j, k, l \in \{1, ... , 2g + 1\}$.

b) The subspace $N^{(2^n)} \subset \varpi_{2^{n + 1}}(\Gamma(2^n))$ is $S_d$-invariant.  Moreover, it is maximal among proper $S_d$-invariant subspaces of $\varpi_{2^{n + 1}}(\Gamma(2^n))$.

c) The intersection $N^{(2^n)}_0 := N^{(2^n)} \cap \varpi_{2^{n + 1}}(\Gamma(2^n))_0$ is a codimension-$1$ subspace of $N^{(2^n)}$ satisfying the following property: if $W$ is an $S_d$-invariant subspace of $\varpi_{2^{n + 1}}(\Gamma(2^n))$ with $N^{(2^n)}_0 \subset W \not\subset \varpi_{2^{n + 1}}(\Gamma(2^n))_0$, then $W$ contains $N^{(2^n)}$ (and therefore coincides with $N^{(2^n)}$ or with $\varpi_{2^{n + 1}}(\Gamma(2^n))$).

d) The subspace $N^{(2^n)} \subset \varpi_{2^{n + 1}}(\Gamma(2^n))$ has dimension $2g^2 - g$, and the quotient space $M^{(2^n)} := \varpi_{2^{n + 1}}(\Gamma(2^n)) / N^{(2^n)}$ has dimension $2g$.  There is an isomorphism $V / 2V \stackrel{\sim}{\to} M^{(2^n)}$ given by sending $v_{\{i, j\}} \in V / 2V$ to the image modulo $N^{(2^n)}$ of $[i, j]$.  The action of $S_d$ on $V / 2V$ induced by the composition of surjections $\varpi_{2^{n + 1}}(\Gamma(2^n)) \twoheadrightarrow M^{(2^n)} \stackrel{\sim}{\to} V / 2V$ is the standard representation $\rho$.

e) For $1 \leq s \leq n - 1$, we have that $\varpi_{2^{n + 1}}(\Gamma(2^s))$ and $\varpi_{2^{n + 1}}(\Gamma(2^{n + 1 - s}))$ commute and that the commutator of $\varpi_{2^{n + 1}}(\Gamma(2^s))$ and $\varpi_{2^{n + 1}}(2^{n - s}))$ coincides with $N^{(2^n)}$.  More explicitly, the commutator of $\varpi_{2^{n + 1}}(t_{a_{i, j}})^{2^s}$ and $\varpi_{2^{n + 1}}(t_{a_{i, k}})^{2^{n - s}}$ is given by $\Delta_{i, j, k} \in N^{(2^n)}$.

f) For $0 \leq s \leq n$, given vectors $a, b \in V / 2^{n + 1}V$ with $a \equiv b$ (mod $2^s$), we have $t_b^{2^{n - s}} t_a^{-2^{n - s}} \in N^{(2^n)}$.

\end{prop}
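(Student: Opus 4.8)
The plan is to reduce the statement to the computation of a single commutator, which is exactly the content of part (f). First I would observe that it suffices to treat the case $b = a + 2^s c$ for a single vector $c \in V/2^{n+1}V$: since any two vectors congruent mod $2^s$ differ by such a term, and since we will see the resulting elements of $N^{(2^n)}$ multiply together (the group $\varpi_{2^{n+1}}(\Gamma(2^n))$ being elementary abelian, hence $N^{(2^n)}$ is closed under the operation), a general $b$ is reached by iterating. So write $b = a + 2^s c$ and expand $t_b^{2^{n-s}} t_a^{-2^{n-s}}$ in $\Sp(V/2^{n+1}V)$.

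Next I would compute directly. Working modulo $2^{n+1}$, we have $t_b = t_a + 2^s(\text{something})$, and more precisely, using $t_v(w) = w + \langle w, v\rangle v$, one checks that $t_b t_a^{-1} \equiv 1 + 2^s E \pmod{2^{2s}}$ for an explicit $E \in \mathfrak{sp}(V/2V)$ depending on $a$ and $c$. Raising to the $2^{n-s}$ power and reducing mod $2^{n+1}$, all the cross terms of order $\geq 2^{2(n-s)+s}$ or involving products of two factors of size $2^{n-s+s} = 2^n$ vanish (since $2 \cdot 2^n \equiv 0$), so $t_b^{2^{n-s}} t_a^{-2^{n-s}} \equiv 1 + 2^{n-s} \cdot 2^s E' = 1 + 2^n E' \pmod{2^{n+1}}$ for the appropriate $E' \in \mathfrak{sp}(V/2V)$; that is, the element lies in $\varpi_{2^{n+1}}(\Gamma(2^n))$, and I must identify $E'$ and show it lies in the subspace $N^{(2^n)}$. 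By bilinearity it is enough to check this when $a$ and $c$ are vectors of the form $v_{\{i,j\}}$ (these generate $V/2V$ by Lemma \ref{lemma compliment}'s remark), i.e. $a = \varpi(a_{i,j})$ and $c = \varpi(a_{i',j'})$. In that situation the element $1 + 2^n E'$ is visibly the commutator appearing in part (f) (or a product of such, after using $t_{a_{i,j}}^{2^n} \in \varpi_{2^{n+1}}(\Gamma(2^n))$ is central mod $2^{n+1}$ and the relations of Proposition \ref{prop 2-group}), hence equals a sum of terms $\Delta_{i,j,k} \in N^{(2^n)}$, which finishes the argument by part (a)'s description of $N^{(2^n)}$.

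Alternatively — and this may be the cleaner route to write up — I would avoid the raw matrix expansion entirely by noting that $t_b^{2^{n-s}} t_a^{-2^{n-s}}$, when both $a, b$ reduce to the same class $\bar a \in V/2V$, is a product of commutators of elements of $\varpi_{2^{n+1}}(\Gamma(2^{s'}))$ with elements of $\varpi_{2^{n+1}}(\Gamma(2^{n-s'}))$ for various $s' \geq s$, plus squares of elements of $\varpi_{2^{n+1}}(\Gamma(2^{n}))$: indeed, writing $b = a$ perturbed in stages by successive powers of $2$, each stage contributes exactly such a commutator via part (f), and all of these lie in $N^{(2^n)}$ by part (f). The squares of level-$2^n$ elements are trivial mod $2^{n+1}$ since that group is elementary abelian, so they contribute nothing.

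The main obstacle I anticipate is purely bookkeeping: tracking which cross-terms survive modulo $2^{n+1}$ when raising $1 + 2^s E$ to the $2^{n-s}$ power, and being careful that the claim is symmetric in the roles of "size $2^s$" and "size $2^{n-s}$" only up to the constraint $0 \le s \le n$ — in the boundary cases $s = 0$ (where the hypothesis $a \equiv b \pmod 1$ is vacuous and one needs $t_b^{2^n} t_a^{-2^n} \in N^{(2^n)}$ for arbitrary $a, b$) and $s = n$ (where $t_b^{2^{n-s}} = t_b$ and one needs the trivial statement that $t_b t_a^{-1} = 1$ when $a \equiv b$, i.e. $a = b$, mod $2^{n+1}$... wait, $a\equiv b$ mod $2^n$ is what $s=n$ gives, so one needs $t_b t_a^{-1} \in N^{(n)}$, but $t_b t_a^{-1}$ is a single transvection-ratio, not obviously in $\Gamma(2^n)$ — here one uses $t_b t_a^{-1} \equiv 1 + 2^n(\cdots)$ directly). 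I would handle $s = n$ by the direct expansion and reduce $0 < s < n$ and $s = 0$ to repeated application of part (f) as sketched, so that no genuinely new computation beyond (f) is required.
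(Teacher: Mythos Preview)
Your proposal heads in the right direction (reduce to part (f)) but misses the one-line trick that the paper uses, and as written it has real gaps. The paper does not write $b = a + 2^s c$ and expand; instead it observes that $a \equiv b \pmod{2^s}$ means $b = t(a)$ for some $t \in \varpi_{2^{n+1}}(\Gamma(2^s))$, and since that group is generated by powers $t_c^{2^s}$ one may assume $b = t_c^{2^s}(a)$. Then the conjugation identity $t_{g(a)} = g\, t_a\, g^{-1}$ gives
\[
t_b^{2^{n-s}} t_a^{-2^{n-s}} \;=\; t_c^{2^s}\, t_a^{2^{n-s}}\, t_c^{-2^s}\, t_a^{-2^{n-s}},
\]
which is literally a commutator of the type computed in (f), hence lies in $N^{(2^n)}$. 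No matrix expansion, no tracking of cross terms, no separate treatment of boundary cases.

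Your first approach conflates $(t_b t_a^{-1})^{2^{n-s}}$ with $t_b^{2^{n-s}} t_a^{-2^{n-s}}$; these differ by commutator corrections that you never control. The reduction ``by bilinearity it is enough to check this when $a$ and $c$ are of the form $v_{\{i,j\}}$'' is not justified, since the map $(a,c) \mapsto E'$ is not bilinear. Your alternative route (``perturb in stages by successive powers of $2$'') is closer in spirit but still does not isolate the conjugation identity, which is what makes the reduction to (f) immediate. Finally, your worry about $s = 0$ is well-founded: with $s = 0$ the hypothesis is vacuous and the conclusion is in fact false for general $a, b$ (e.g.\ $t_b^{2^n} t_a^{-2^n}$ has image $\bar b + \bar a \neq 0$ in $M^{(2^n)}$ when $\bar a \neq \bar b$), so the statement should really be read with $s \geq 1$; every application of (g) in the paper uses $s \geq 1$, and the paper's own proof invokes (f), which is only stated for $s \geq 1$.
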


\begin{proof}

Choose any element $u \in \varpi_{2^{n + 1}}(\Gamma(2^n))$, whose basis expansion is $\sum_{1 \leq i < j \leq 2g + 1} c_{i, j} [i, j]$ for some scalars $c_{i, j}$.  For each pair of distinct $i, j \geq 2$ such that $c_{i, j} = 1$, we add the element $\Delta_{1, j, k} \in N^{(2^n)}$ to $u$ until the only basis elements left appearing in the linear expansion are of the form $[1, i]$; we write $u'$ for this resulting vector.  It follows directly from the definition of $N^{(2^n)}$ that such an element $u'$ lies in $N^{(2^n)}$ if and only if it is trivial, which implies that $u \in N^{(2^n)}$ if and only if $u$ is the sum of elements of the form $\Delta_{i, j, k}$.  This proves that $N^{(2^n)}$ is generated by the $\Delta_{i, j, k}$'s, which is the first statement of part (a).

Now choose any element $u = \sum_{i, j, k} m_{i, j, k}\Delta_{i, j, k} \in N^{(2^n)}$ for scalars $m_{i, j, k} \in \ff_2$.  If we consider $u$ as an element of the vector space $\varpi_{2^n}(\Gamma(2^{n + 1}))$ and write the basis expansion $\sum_{1 \leq i < j \leq 2g + 1} c_{i, j} [i, j]$ for coefficients $c_{i, j} \in \ff_2$, we have $c_{i, j} = \sum_{k \neq i, j} m_{i, j, k}$ for each $i, j$.  We assume that $u = 0$, so we have that $\sum_{k \neq i, j} m_{i, j, k} = 0$ for each $i, j$.  If all scalars $m_{i, j, k}$ are trivial, we are done, so we assume without loss of generality that the (even-cardinality) subset $I \subset \{3, ... , 2g + 1\}$ whose elements $k$ satisfy $m_{1, 2, k} = 1$ is nonempty.  We partition $I$ into $2$-element subsets, and for each such subset $\{k, l\}$ subtract the element $\Delta_{1, 2, k} + \Delta_{1, 2, l} + \Delta_{1, k, l} + \Delta_{2, k, l}$ from the sum $\sum_{i, j, k} m_{i, j, k}\Delta_{i, j, k}$; the resulting sum $\sum_{i, j, k} m_{i, j, k}' \Delta_{i, j, k}$ satisfies $m_{1, 2, k} = 0$ for $3 \leq k \leq 2g + 1$.  Now for each other pair $i, j$ with $1 \leq i < j \leq 2g + 1$ such that $m_{i, j, k} = 1$ for some $k \neq i, j$, we can repeat this process of getting rid of all summands $\Delta_{i, j, k}$ by subtracting sums of the form $\Delta_{i, j, k} + \Delta_{i, j, l} + \Delta_{i, k, l} + \Delta_{j, k, l}$ until the sum is trivial.  This proves that such expressions provide a full set of relations among the $\Delta_{i, j, k}$'s and gives the second statement of part (a).

Now to prove that $N^{(2^n)}$ is $S_d$-invariant, it is enough to check how elements of $S_d$ act on the generators given by part (a).  It is clear from the definition of the elements $\Delta_{i, j, k}$ that $S_d$ permutes the set $\{\Delta_{i, j, k}\}_{i, j, k \in \{1, ... , d\}}$, so it suffices to show that all elements in this set lie in $N^{(2^n)}$.  When $i, j, k \neq 2g + 2$, we have $\Delta_{i, j, k} \in N^{(2^n)}$ by definition.  In the case that $d = 2g + 2$, given any distinct $i, j \in \{1, ... , 2g + 1\}$, we apply the formula in Lemma \ref{lemma compliment} to get 
\begin{equation}
\Delta_{i, j, 2g + 2} = \sum_{k \in \{1, ... , 2g + 1\} \smallsetminus \{i, j\}} \Delta_{i, j, k}.
\end{equation}
  Therefore, we have $\Delta_{i, j, k} \in N^{(2^n)}$ for all distinct $i, j, k \in \{1, ... , d\}$, thus proving the first statement of part (b).

Now let $W \subseteq \varpi_{2^{n + 1}}(\Gamma(2^n))$ be an $S_d$-invariant subspace which properly contains $N^{(2^n)}$, and choose a vector $w \in W \smallsetminus N^{(2^n)}$.  We shall show that this assumption implies that $W = \varpi_{2^{n + 1}}(\Gamma(2^n))$.  By what we have shown to prove part (a), we may write $w$ as a sum of an element in $N^{(2^n)}$ and the element $w' := \sum_{j \in I} [1, j]$ for some subset $I \subseteq \{2, ... , 2g + 1\}$.  Since $w \notin N^{(2^n)}$, we have $w' \neq 0$ and so $I \neq \varnothing$.  If $I = \{2, ... , 2g + 1\}$, let $\sigma = (1, 2) \in S_d$; otherwise, let $\sigma = (i, j) \in S_d$ for some $i \in I$ and $j \in \{2, ... , 2g + 1\} \smallsetminus I$.  Then by $S_d$-invariance we get $w' + ^{\sigma}\!w' \in W$ and, after subtracting an appropriate element of $N^{(2^n)}_0 \subset W$, we either get that $[1, 3] + [2, 3] \in W$ (if $I = \{2, ... , 2g + 1\}$) or we get that $[1, i] + [1, j] \in W$ (otherwise).  So by $S_d$-invariance, we have $[i, j] + [i, k] \in W$ for all distinct $i, j, k \in \{1, ... , 2g + 1\}$, and it is easy to see that these elements generate $\varpi_{2^{n + 1}}(\Gamma(2^n))_0$.  But since the subspace $W$ contains the element $[1, 2] + [1, 3] + [2, 3] \in N^{(2^n)}$, we have $W \supsetneq \varpi_{2^{n + 1}}(\Gamma(2^n))_0$ and the desired conclusion results from the maximality of $\varpi_{2^{n + 1}}(\Gamma(2^n))_0$ as a subspace of $\varpi_{2^{n + 1}}(\Gamma(2^n))$.  We have thus proved part (b).

Now let $W \subseteq \varpi_{2^{n + 1}}(\Gamma(2^n))$ be an $S_d$-invariant subspace which contains $N^{(2^n)}_0$ and is not contained in $\varpi_{2^{n + 1}}(\Gamma(2^n))_0$, and choose a vector $w \in W \smallsetminus N^{(2^n)}_0$.  We observe that $\Delta_{1, 2, 3} \in N^{(2^n)} \smallsetminus N^{(2^n)}_0$ and so $N^{(2^n)}_0 \subsetneq N^{(2^n)}$ is a subspace of codimension $1$.  Therefore, if $w \in N^{(2^n)}$ we have $W \supseteq N^{(2^n)}$.  In the case that $w \notin N^{(2^n)}$, then we see that that $W \supseteq \varpi_{2^{n + 1}}(\Gamma(2^n))_0$ by the exact same argument as was used to prove the second statement of part (b).  Now, since $W$ by hypothesis is not contained in $\varpi_{2^{n + 1}}(\Gamma(2^n))_0$, we get $W = \varpi_{2^{n + 1}}(\Gamma(2^n))$, and part (c) is proved.

We established above that the $2^{2g}$-element set of elements of the form $\sum_{i \in I} [1, i]$ for all subsets $I \subseteq \{2, ... , 2g + 1\}$ is a set of coset representatives for $N^{(2^n)} \subset \varpi_{2^{n + 1}}(\Gamma(2^n))$.  Therefore, the induced quotient $M^{(2^n)}$ has dimension $2g$, implying that $N^{(2^n)}$ has dimension $(2g^2 + g) - 2g = 2g^2 - g$.  It is now straightforward to check that the map $V / 2V \to M^{(2^n)}$ given in the statement of (c) is an isomorphism, using the fact that $[i, j] + [i, k] \equiv [j, k]$ (mod $N^{(2^n)}$) for distinct $i, j, k$.  Finally, the fact that the induced representation on $V / 2V$ is simply the standard representation $\rho$ follows easily from the fact that $^{\sigma}[i, j] = [\sigma(i), \sigma(j)]$ for $1 \leq i < j \leq d$, proving part (d).

One proves (e) by first checking directly that modulo $2^{n + 1}$, we have the equivalence 
\begin{equation}
(1 + 2^s T_{\{i, j\}})(1 + 2^{n - s}T_{\{i, k\}}) \equiv (1 + 2^{n - s} T_{\{i, k\}})(1 + 2^sT_{\{i, j\}})(1 + 2^n T_{\{i, j\}})(1 + 2^n T_{\{i, k\}})(1 + 2^n T_{\{j, k\}})
\end{equation}
 for any distinct $i, j, k$, where the elements $T_{\{i, j\}} \in \mathfrak{sp}(V)$ are defined as in the above discussion.  Since we have the easily-derived identity $t_{\{i, j\}}^{2^r} = 1 + 2^r T_{\{i, j\}}$ for integers $r \geq 0$, this gives the formula for the commutator given in the statement.  It also implies that $\varpi_{2^n}(\Gamma(2^s))$ and $\varpi_{2^n}(\Gamma(2^{n - s}))$ commute in $\Sp(V / 2^n V)$, which is the first statement of (e) (with $n$ replaced by $n - 1$).  This statement, combined with the fact that each $\varpi_{2^{n + 1}}(\Gamma(2^s))$ is generated by $\varpi_{2^{n + 1}}(\Gamma(2^{s + 1}))$ and the elements $\varpi_{2^{n + 1}}(t_{a_{i, j}})^s$ by Proposition \ref{prop 2-group}(a) along with part (a) of the proposition we are proving, implies that the commutator of $\varpi_{2^{n + 1}}(\Gamma(2^s))$ and $\varpi_{2^{n + 1}}(2^{n - s}))$ coincides with $N^{(2^n)}$.

Finally, to prove (f), we first note that we must have $b = t(a)$ for some $t \in \varpi_{2^{n + 1}}(\Gamma(2^s))$, and since $\varpi_{2^{n + 1}}(\Gamma(2^s))$ is generated by elements of the form $t_c^{2^s}$, we may assume that $b = t_c^{2^s}(a)$ for some $c \in V / 2^{n + 1}V$.  Then we get $t_b^{2^{n - s}} t_a^{-2^{n - s}} = t_c^{2^s} t_a^{2^{n - s}} t_c^{-2^s} t_a^{-2^{n - s}}$, which by part (e) lies in $N^{(2^n)}$.

\end{proof}

\begin{rmk} \label{rmk N}

Part (a) of the proposition above effectively provides an alternate definition of $N^{(2^n)}$; here we present a third definition which is easily seen to be equivalent.  Let $\langle \cdot, \cdot \rangle : \varpi_4(\Gamma(2)) \times \varpi_4(\Gamma(2)) \to \ff_2$ be the alternating bilinear pairing given by setting $\langle [i, j], [k, l] \rangle = \#(\{i, j\} \cap \{k, l\})$ and extending $\ff_2$-linearly.  Then $N^{(2^n)}$ coincides with the kernel of this pairing, and the induced pairing on $M^{(2^n)}$ is the one given by transport of structure via the isomorphism $V / 2V \stackrel{\sim}{\to} M^{(2^n)}$ described in part (c).

\end{rmk}

\section{Lifting to $\zz / 4\zz$} \label{sec3}

We will now find all possible liftings of $\mathfrak{S}_d$ to subgroups of $\Sp(V / 4V)$, thus classifying all possible subgroups $\varpi_4(G) \subset \varpi_{4 \to 2}^{-1}(\mathfrak{S}_d)$.  We retain all previous notation, in particular our choice of lifts $a_{i, j} \in V$ of the vectors $v_{\{i, j\}} \in V / 2V$ and the subgroup $N^{(2)} \subset \varpi_4(\Gamma(2))$ defined in \S\ref{sec2.2} with its corresponding quotient $M^{(2)}$.  Since it should not cause confusion, throughout this section we denote the subspaces $N^{(2)}$ and $M^{(2)}$ simply by $N$ and $M$ respectively.

\subsection{The possible images of $G \cap \Gamma(2)$ modulo $4$} \label{sec3.1}

We now show that the only possibilities for the subgroup $\varpi_4(G \cap \Gamma(2)) \subseteq \varpi_4(\Gamma(2))$ are that it is the full $\varpi_4(\Gamma(2))$ or that it coincides with $N$.

\begin{lemma} \label{lemma contains N}

Suppose that $H \subset \varpi_{4 \to 2}^{-1}(\mathfrak{S}_d)$ is a subgroup satisfying $\varpi_{4 \to 2}(H) = \mathfrak{S}_d$.  We have the (strict) containment $N \subset H$.  In fact, we have that $H \not\supset \varpi_4(\Gamma(2))$ implies $H \cap \varpi_4(\Gamma(2)) = N$.

\end{lemma}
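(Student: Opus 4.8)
The plan is to analyze the subgroup $W := H \cap \varpi_4(\Gamma(2))$, which is precisely $\ker(\varpi_{4 \to 2}|_H)$ and hence normal in $H$. Since $\varpi_4(\Gamma(2))$ is abelian (Proposition \ref{prop 2-group}) and conjugation by an element $h \in H$ acts on it only through the permutation $\varpi_{4 \to 2}(h) \in \mathfrak{S}_d$ by the rule of Proposition \ref{prop 2-group}(b), and since $\varpi_{4 \to 2}(H)$ is all of $\mathfrak{S}_d = S_d$, the group $W$ is an $\ff_2[S_d]$-submodule of $\varpi_4(\Gamma(2)) = \bigoplus_{i < j}\ff_2[i, j]$. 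Both assertions of the lemma then reduce to proving $N \subseteq W$: if so, then $N \subseteq W \subseteq H$, strictly since $H$ surjects onto the nontrivial group $\mathfrak{S}_d$; and if in addition $H \not\supseteq \varpi_4(\Gamma(2))$, then $W$ is a proper $S_d$-submodule containing $N$, hence $W = N$ by the maximality statement of Proposition \ref{prop N}(d).

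To obtain $N \subseteq W$, it suffices to exhibit a single element $\Delta_{i, j, k} \in W$: since $^{\sigma}\Delta_{i, j, k} = \Delta_{\sigma(i), \sigma(j), \sigma(k)}$ and $S_d$ is $3$-transitive, one such element of the $S_d$-module $W$ forces all of them, and these generate $N$ by Proposition \ref{prop N}(b). I would produce one by lifting transpositions: choose $h_{i, j} \in H$ with $\varpi_{4 \to 2}(h_{i, j}) = (i, j)$, and, using $\varpi_2(a_{i, j}) = v_{\{i, j\}}$, write $h_{i, j} = \varpi_4(t_{a_{i, j}})\,\gamma_{i, j}$ with $\gamma_{i, j} \in \varpi_4(\Gamma(2))$. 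Then $\varpi_4(t_{a_{i, j}})^2 = [i, j]$ together with the conjugation rule of Proposition \ref{prop 2-group}(b) gives, in the additively-written abelian group $\varpi_4(\Gamma(2))$, the identity $h_{i, j}^2 = [i, j] + \gamma_{i, j} + {}^{(i, j)}\gamma_{i, j} \in W$; for instance $h_{1, 2}^2 h_{1, 3}^2 h_{2, 3}^2 = \Delta_{1, 2, 3} + (\gamma_{1,2} + {}^{(1,2)}\gamma_{1,2}) + (\gamma_{1,3} + {}^{(1,3)}\gamma_{1,3}) + (\gamma_{2,3} + {}^{(2,3)}\gamma_{2,3})$ lies in $W$. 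One likewise records the elements of $W$ coming from lifts of words in transpositions whose product is trivial (e.g.\ $h_{1,2}h_{1,3}h_{1,2}h_{2,3} \in W$, reflecting $(1,2)(1,3)(1,2)(2,3) = 1$); expanded via Proposition \ref{prop 2-group}(b), each such element is an explicit combination of the $\gamma_{i, j}$'s, their $S_d$-conjugates, and transvection words $\varpi_4(t_{a_{i_1, j_1}}) \cdots \varpi_4(t_{a_{i_r, j_r}}) \in \varpi_4(\Gamma(2))$ (the latter concrete once the $a_{i, j}$ have been fixed). The goal is to combine a judicious selection of these so that every $\gamma$-term and every transvection-word term cancels, leaving exactly $\Delta_{1, 2, 3}$.

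Executing this cancellation is the heart of the argument and the step I expect to be the main obstacle: a priori the corrections $\gamma_{i, j}$ range over all of $\varpi_4(\Gamma(2))$, not just over $W$, and the transvections $\varpi_4(t_{a_{i, j}})$ need not commute modulo $4$ even when their images in $S_d$ commute, since $\langle a_{i, j}, a_{k, l}\rangle$ is controlled only modulo $2$. To organize the bookkeeping I would pass to the quotient $M = \varpi_4(\Gamma(2))/N$: by Remark \ref{rmk N}, $N$ is the radical of the pairing determined by $\langle [i, j], [k, l]\rangle = \#(\{i, j\} \cap \{k, l\})$, and one checks that $\pi(\gamma_{i, j} + {}^{(i, j)}\gamma_{i, j}) = \langle \gamma_{i, j}, [i, j]\rangle\,\pi([i, j])$, so that modulo $N$ all dependence on the $\gamma$'s reduces to finitely many scalars. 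Since $\pi(W)$ is an $S_d$-submodule of the irreducible standard representation $M$, it is either $0$ (so $W \subseteq N$) or all of $M$; in either case a short case analysis — invoking Proposition \ref{prop N}(e) to leverage the subspace $N_0$ — should isolate enough genuine elements of $N$ inside $W$ to conclude that $\Delta_{1, 2, 3} \in W$, and hence $N \subseteq W$, which by the first paragraph completes the proof.
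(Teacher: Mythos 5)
Your setup matches the paper's: you correctly reduce both claims to showing $N \subseteq W := H \cap \varpi_4(\Gamma(2))$ via Proposition \ref{prop N}(d), you observe that producing a single $\Delta_{i,j,k}$ in the $S_d$-module $W$ is enough (by $3$-transitivity and Proposition \ref{prop N}(b)), and you compute $h_{i,j}^2 = [i,j] + \gamma_{i,j} + {}^{(i,j)}\gamma_{i,j} \in W$, which is exactly the element $\mathfrak{u}$ the paper works with. But you stop at precisely the step that carries all the content, and you say so yourself: ``Executing this cancellation is the heart of the argument and the step I expect to be the main obstacle.'' That sentence is where the proof ends and the hope begins.

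The route you sketch afterward does not close the gap. Passing to $M = \varpi_4(\Gamma(2))/N$ and noting that $\pi(W)$ is $0$ or $M$ (by irreducibility of the standard representation) tells you nothing about $W \cap N$: if $\pi(W) = 0$ you only learn $W \subseteq N$ (you need the reverse containment), and if $\pi(W) = M$ the extension $0 \to N \to \varpi_4(\Gamma(2)) \to M \to 0$ a priori admits submodules surjecting onto $M$ with small intersection with $N$. Invoking Proposition \ref{prop N}(e) requires first having $N_0 \subset W$, which is exactly the kind of thing still to be proved. In short, the ``short case analysis'' is not a case analysis that works; it is the unproved heart of the lemma. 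The paper's actual resolution is concrete and avoids $M$ entirely. It notes that for $\sigma = (1,2)$ the symmetrization $\gamma + {}^{\sigma}\gamma$ collapses to $\sum_{j \in I}([1,j]+[2,j])$ for a set $I \subseteq \{3,\dots,2g+1\}$, so $\mathfrak{u} = [1,2] + \sum_{j\in I}([1,j]+[2,j])$. If $I = \varnothing$ this is $[1,2]$, which by $S_d$-invariance generates the whole of $\varpi_4(\Gamma(2))$. If $I \neq \varnothing$, pick $k \in I$ and add two $S_d$-conjugates of $\mathfrak{u}$ itself: one checks $\mathfrak{u} + {}^{(2,k)}\mathfrak{u} + {}^{(1,2,k)}\mathfrak{u} = [1,2] + [1,k] + [2,k] = \Delta_{1,2,k} \in W$. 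Conjugating $\mathfrak{u}$ by specific permutations chosen to cancel the unknown $I$-dependence --- rather than forming relations from several $h_{i,j}$'s or reasoning abstractly in $M$ --- is the missing idea.
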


\begin{proof}

We start by claiming that $H \cap \varpi_4(\Gamma(2))$ is an $S_d$-invariant subspace of $\varpi_4(\Gamma(2))$ in the sense of \S\ref{sec2.1}.  Given any permutation $\sigma \in S_d$, we can find an element $u \in H$ with $\varpi_{4 \to 2}(u) = \sigma \in \mathfrak{S}_d$ since $H$ surjects onto $\mathfrak{S}_d$ by hypothesis.  Now the action of $\sigma$ on $\varpi_4(\Gamma(2))$ as defined using Proposition \ref{prop 2-group}(b) is simply conjugation by $u \in H$, which clearly stabilizes $H \cap \varpi_4(\Gamma(2)) \lhd H$, thus implying the claim.  Now Proposition \ref{prop N}(b) says that the only $S_d$-invariant subgroup of $\varpi_4(\Gamma(2))$ strictly containing $N$ is $\varpi_4(\Gamma(2))$ itself; therefore, the second statement of the proposition follows from the first.

Choose any transposition in $S_d$, say $\sigma := (1, 2) \in S_d = \mathfrak{S}_d$, and let $u \in H$ be an element with $\varpi_{4 \to 2}(u) = \sigma$.  Choose a vector $a \in V / 4V$ whose modulo-$2$ image is given by $v_{\{1, 2\}} \in V / 2V$.  Then it is straightforward to check that $\varpi_{4 \to 2}(t_a) = \sigma$ so we have $u = \phi t_a$ for some $\phi \in \varpi_4(\Gamma(2))$.  Note that we have $t_a^2 = [1, 2] \in \varpi_4(\Gamma(2))$.  By Proposition \ref{prop 2-group}(a), we have $\phi = \sum_{1 \leq i < j \leq 2g + 1} c_{i, j} [i, j]$ for unique scalars $c_{i, j} \in \ff_2$.  Letting $I \subseteq \{3, ... , 2g + 1\}$ be the subset consisting of all $j$ such that $c_{1, j} \neq c_{2, j}$, we compute using Proposition \ref{prop 2-group}(b) that 
\begin{align}
\begin{split}
\mathfrak{u} := u^2 = \phi (t_a \phi t_a^{-1}) t_a^2 = t_a^2 \phi (t_a \phi t_a^{-1}) &= [1, 2] + \sum_{1 \leq i < j \leq 2g + 1} c_{i, j} ([i, j] + [\sigma(i), \sigma(j)]) \\
 &= [1, 2] + \sum_{3 \leq j \leq 2g + 1} (c_{1, j} + c_{2, j})([1, j] + [2, j]) \\
 &= [1, 2] + \sum_{j \in I} ([1, j] + [2, j]).
\end{split}
\end{align}
  If $I = \varnothing$, then the computation above shows that $\mathfrak{u} = [1, 2]$.  It then follows from the $S_d$-invariance of $H \cap \varpi_4(\Gamma(2))$ and Proposition \ref{prop 2-group}(b) that $[i, j] \in H$ for all $1 \leq i < j \leq 2g + 1$.  Since these elements generate $\varpi_4(\Gamma(2))$ by Proposition \ref{prop 2-group}(a), we are done.  We therefore assume that $I \neq \varnothing$ and choose a natural number $k \in I$.  It is straightforward to compute that 
\begin{equation}
\mathfrak{u} + ^{(2, k)}\!\!\mathfrak{u} + ^{(1, 2, k)}\!\!\mathfrak{u} = [1, 2] + [1, k] + [2, k] \in N.
\end{equation}
Since $\mathfrak{u} + ^{(2, k)}\!\mathfrak{u} + ^{(1, 2, k)}\!\mathfrak{u} \in H \cap \varpi_4(\Gamma(2))$, similarly by $S_d$-invariance we get that the elements $[i, j]+ [i, k] + [j, k] = \Delta_{i, j, k}$ lie in $H$ for all distinct $i, j, k$.  Since these elements generate $N$ by Proposition \ref{prop N}(a), we get $N \subset H$.
\end{proof}

\begin{cor} \label{cor contains N}

Suppose that $H \subset \varpi_{4 \to 2}^{-1}(\mathfrak{S}_d)$ is a subgroup satisfying $\varpi_{4 \to 2}(H) = \mathfrak{S}_d$ and $H \cap \varpi_4(\Gamma(2)) = N$.  For each $\sigma \in \mathfrak{S}_d$, let $u_{\sigma} \in H$ be an element such that $\varpi_{4 \to 2}(u_{\sigma}) = \sigma$.  Then the set of $u_{\sigma}$'s generates $H$.

\end{cor}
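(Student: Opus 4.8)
The plan is to re-run the argument of Lemma \ref{lemma contains N} while tracking that every element it produces lies in the subgroup $H' := \langle u_\sigma \mid \sigma \in \mathfrak{S}_d\rangle$ generated by the chosen representatives. First note that $\varpi_{4 \to 2}(H') = \mathfrak{S}_d$, and that $N = H \cap \varpi_4(\Gamma(2))$ is exactly the kernel of $\varpi_{4\to 2}$ restricted to $H$; hence every $h \in H$ lies in $u_\sigma N$ for $\sigma = \varpi_{4\to 2}(h)$, so $H = H'N$. It therefore suffices to prove $N \subseteq H'$, for then $H = H'N = H'$.

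Next I would check that $H' \cap N$ is an $S_d$-invariant $\ff_2$-subspace of $N$ in the sense of \S\ref{sec2.1}: it is a subgroup of the elementary abelian group $\varpi_4(\Gamma(2))$, hence an $\ff_2$-subspace, and for any $n \in H' \cap N$ and $\sigma \in \mathfrak{S}_d$ the conjugate $u_\sigma n u_\sigma^{-1}$ equals ${}^{\sigma}n$ by Proposition \ref{prop 2-group}(b) while also lying in $H'$. Since $\mathfrak{S}_d = S_d$ acts transitively on the $3$-element subsets of $\{1,\dots,d\}$ and $\Delta_{i,j,k}$ depends only on $\{i,j,k\}$, it then suffices to place a single $\Delta_{i,j,k}$ inside $H' \cap N$: the $S_d$-invariance drags in all the generators $\Delta_{i,j,k}$ of $N$ (Proposition \ref{prop N}(b)), forcing $N \subseteq H'$. (Note that $N$ is in general a reducible $S_d$-module, so one genuinely needs to exhibit such an element rather than argue by nonvanishing.)

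To produce it, I would take $\sigma = (1,2)$ and set $\mathfrak{u} := u_\sigma^2 \in H'$. Its image modulo $2$ is trivial, so $\mathfrak{u} \in H' \cap \varpi_4(\Gamma(2)) \subseteq N$, using the hypothesis $H \cap \varpi_4(\Gamma(2)) = N$. Writing $u_\sigma = \phi t_a$ with $\varpi_{4\to 2}(t_a) = \sigma$ and $\phi \in \varpi_4(\Gamma(2))$ exactly as in the proof of Lemma \ref{lemma contains N}, the same computation yields $\mathfrak{u} = [1,2] + \sum_{j\in I}([1,j] + [2,j])$, where $I \subseteq \{3,\dots,2g+1\}$ consists of those $j$ with $c_{1,j} \neq c_{2,j}$; moreover $I \neq \varnothing$, since otherwise $\mathfrak{u} = [1,2] \notin N$. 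For any $k \in I$, the identity $\mathfrak{u} + {}^{(2,k)}\mathfrak{u} + {}^{(1,2,k)}\mathfrak{u} = \Delta_{1,2,k}$ from that same proof exhibits $\Delta_{1,2,k}$ as the product of $\mathfrak{u}$ with two of its $H'$-conjugates, so $\Delta_{1,2,k} \in H' \cap N$, and we are done.

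There is no real obstacle here; the only point requiring care is precisely the bookkeeping above — that the square $\mathfrak{u}$ and the conjugates ${}^{(2,k)}\mathfrak{u}$, ${}^{(1,2,k)}\mathfrak{u}$ lie in $H'$ and not merely in $H$, which holds because the $S_d$-action on $\varpi_4(\Gamma(2))$ is implemented by conjugation by the $u_\sigma \in H'$, together with the use of the hypothesis $H \cap \varpi_4(\Gamma(2)) = N$ to pin $\mathfrak{u}$ into $N$ (and hence make $I$ nonempty). Without that hypothesis one is back in the dichotomy of Lemma \ref{lemma contains N}.
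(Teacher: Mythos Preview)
Your argument is correct, but it does more work than the paper's. Both proofs reduce to showing $N \subseteq H'$ (where $H' = \langle u_\sigma \rangle$), after which $H = H'N = H'$ follows as you note. The paper simply observes that $H'$ already satisfies the hypothesis of Lemma \ref{lemma contains N}, namely $\varpi_{4 \to 2}(H') = \mathfrak{S}_d$, and invokes that lemma to conclude $N \subset H'$ in one stroke. You instead re-run the entire computation from the proof of Lemma \ref{lemma contains N} inside $H'$, tracking that each step stays in $H'$ and using the extra hypothesis $H \cap \varpi_4(\Gamma(2)) = N$ to rule out the $I = \varnothing$ branch. That works, but it is unnecessary: the first assertion of Lemma \ref{lemma contains N} applies to any subgroup surjecting onto $\mathfrak{S}_d$, with no assumption on its intersection with $\varpi_4(\Gamma(2))$, so it applies directly to $H'$.
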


\begin{proof}

Let $H' \subseteq H$ be the subgroup generated by the $u_{\sigma}$'s.  Choose any element $u \in H$, and let $\sigma = \varpi_2(u)$.  Then we have $u u_{\sigma}^{-1} \in H \cap \varpi_4(\Gamma(2)) = N$.  But by applying Lemma \ref{lemma contains N} to $H'$, we see that $N \subset H'$ and so $u = (u u_{\sigma}^{-1}) u_{\sigma} \in H'$.  We thus get the desired equality $H' = H$.

\end{proof}

\subsection{Quasi-cocycles} \label{sec3.2}

We now fix, once and for all, a set of lifts $\tilde{\sigma} \in \Sp(V)$ for all $\sigma \in \mathfrak{S}_d$ which satisfies the following hypothesis.

\begin{hyp} \label{hyp lifts}

We assume the following.

a) We have $\tilde{(1)} = 1$ and $\varpi_2(\tilde{\sigma}) = \sigma$ for $\sigma \in \mathfrak{S}_d$.

b) For each transposition $(i, j) \in \mathfrak{S}_d$, we have $\widetilde{(i, j)} = t_{a_{i, j}}$.

\end{hyp}

In practice, we shall only be concerned with lifts of transpositions, and lifts of nontrivial non-transpositions can be chosen arbitrarily.

For the following lemma and below, we write $\pi : \varpi_4(\Gamma(2)) \twoheadrightarrow M$ for the quotient map with respect to $N$.

\begin{lemma} \label{lemma lift identities}

The lifts of transpositions fixed above satisfy the relations 
\begin{equation} \label{eq i j k}
\varpi_8(\widetilde{(i, j)}\widetilde{(i, k)}\widetilde{(i, j)}) = \varpi_8(\widetilde{(i, k)}\widetilde{(i, j)}\widetilde{(i, k)})
\end{equation}
 and 
\begin{equation} \label{eq i j k l}
\pi \circ \varpi_4(\widetilde{(i, j)}\widetilde{(k, l)}) = \pi \circ \varpi_4(\widetilde{(k, l)}\widetilde{(i, j)})
\end{equation}
 for distinct $i, j, k, l$.

\end{lemma}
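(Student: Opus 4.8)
The plan is to verify each of the two relations \eqref{eq i j k} and \eqref{eq i j k l} by a direct computation with transvections, using the explicit form $\widetilde{(i,j)} = t_{a_{i,j}}$ guaranteed by Hypothesis \ref{hyp lifts}(b) together with the fact that each chosen lift $a_{i,j} \in V$ reduces mod $2$ to $v_{\{i,j\}}$. The governing principle is that the braid relation and the commutation relation hold \emph{exactly} in $S_d$, hence hold modulo $2$ in $\Sp(V)$; the point is to measure the $2$-adic error in lifting these relations, and to show it vanishes at the indicated level of precision.

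For \eqref{eq i j k}, first I would reduce to the case where $a_{i,j}$ and $a_{i,k}$ are \emph{specific} convenient lifts — say honest lifts chosen so that $\langle a_{i,j}, a_{i,k}\rangle$ is a fixed integer congruent mod $2$ to $\#(\{i,j\}\cap\{i,k\}) = 1$ — because changing the lift $a_{i,j}$ to another lift $a_{i,j} + 2c$ alters $t_{a_{i,j}}$ by an element of $\Gamma(2)$, and Proposition \ref{prop N}(g) (with $s=1$, $n=2$) controls this change modulo $N^{(4)}$, hence in particular controls $\varpi_8$ of the relevant products once one tracks how the discrepancy propagates. Actually, the cleaner route: observe that both sides of \eqref{eq i j k}, being products of three transvections each conjugate to transvections along vectors reducing to $v_{\{i,j\}}$, $v_{\{i,k\}}$, $v_{\{j,k\}}$ in some order, already agree modulo $2$; write each side as $1 + (\text{explicit mod-}2\text{ part}) + 2(\cdots)$ and expand $t_{a_{i,j}} t_{a_{i,k}} t_{a_{i,j}}$ using $t_a = 1 + T_a$ where $T_a : v \mapsto \langle v, a\rangle a$. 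Expanding the sixfold product $T$-expression and collecting terms by $2$-adic valuation, the degree-$0$ and degree-$1$ (in the $2$-adic sense, i.e. the "$\le 2^2$" part) contributions match on both sides by the symmetry of the configuration $\{a_{i,j}, a_{i,k}\}$ inside the symplectic plane they span, and what remains is a term divisible by $2^3$, which is killed by $\varpi_8$. The key algebraic input is the identity $T_a T_b T_a = \langle a,b\rangle^2 T_a + (\text{something symmetric in a suitable sense})$ combined with $\langle a_{i,j}, a_{i,k}\rangle$ being odd, so that $\langle a,b\rangle^2 \equiv 1 \pmod{8}$ — wait, it is only $\equiv 1 \pmod 8$ when $\langle a,b\rangle \equiv \pm 1 \pmod 4$, so I would fix the lifts so that $\langle a_{i,j}, a_{i,k}\rangle \in \{1,-1,3,-3,\dots\}$; since any odd square is $1 \bmod 8$, this is automatic. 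That is exactly why the statement is modulo $8$ and not merely modulo $4$.

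For \eqref{eq i j k l} with $i,j,k,l$ distinct, the vectors $a_{i,j}$ and $a_{k,l}$ reduce to $v_{\{i,j\}}$, $v_{\{k,l\}}$ which are orthogonal under $\langle\cdot,\cdot\rangle$ (disjoint supports), so $\langle a_{i,j}, a_{k,l}\rangle$ is even. Expanding the commutator $t_{a_{i,j}} t_{a_{k,l}} t_{a_{i,j}}^{-1} t_{a_{k,l}}^{-1}$ shows it lies in $\Gamma(2)$ and, more precisely, its image in $\varpi_4(\Gamma(2))$ is an explicit element whose class I want to show lies in $N = N^{(2)}$, so that it dies under $\pi$. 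Here I would invoke Proposition \ref{prop N}(f): the commutator of $\varpi_4(t_{a_{i,j}})$ and $\varpi_4(t_{a_{k,l}})$ — viewing these as elements of $\Gamma(2^0)$-type and using that disjointness forces the relevant $T$-products to land in the span of the $\Delta_{\bullet}$'s — but more directly, by Remark \ref{rmk N}, $N^{(2)}$ is the kernel of the pairing $\langle[i,j],[k,l]\rangle = \#(\{i,j\}\cap\{k,l\})$, and one checks that the mod-$4$ commutator, expanded via $T_a T_b - T_b T_a$, has coefficients $c_{p,q}$ in the $[p,q]$-basis satisfying $\sum_{q \ne p} c_{p,q} = 0$ for all $p$ precisely because $\{i,j\}$ and $\{k,l\}$ are disjoint — this is a short finite check using $T_a T_b : v_J \mapsto \langle v_J, b\rangle \langle b, a\rangle a$ and the combinatorics of two disjoint pairs. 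Hence $\pi$ kills it, giving \eqref{eq i j k l}.

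The main obstacle I anticipate is bookkeeping precision in the first relation: making sure that when one expands $t_{a_{i,j}} t_{a_{i,k}} t_{a_{i,j}}$ the cross terms involving $\langle a_{i,j}, a_{i,k}\rangle$ — which is odd but \emph{not} equal to $1$ on the nose — reorganize so that the only obstruction to equality with the reversed product is divisible by $8$; this hinges on the arithmetic fact that odd squares are $\equiv 1 \pmod 8$ and on choosing the lifts $a_{i,j}$ compatibly (which Hypothesis \ref{hyp lifts} permits, since it only pins down $\widetilde{(i,j)} = t_{a_{i,j}}$ and leaves $a_{i,j}$ free among lifts of $v_{\{i,j\}}$). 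Once the $2$-adic valuations are tracked carefully the identities fall out, but the naive expansion has many terms, so I would organize it by first reducing mod $4$ (where the braid relation already holds on the nose for a good choice of lifts, or the error is an explicit element of $N$), then bootstrapping to mod $8$ using that the mod-$4$-error, when squared or conjugated appropriately, lands in $\Gamma(4)$ in a controlled way.
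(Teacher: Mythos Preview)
Your proposal is essentially correct and, for \eqref{eq i j k}, matches the paper's argument exactly: the paper's one-line proof is that $\langle a_{i,j}, a_{i,k}\rangle^2 \equiv 1 \pmod 8$ (since the pairing is odd), after which the braid relation follows from the transvection formulas. Your detour about normalizing lifts is unnecessary, as you yourself realize by the end---any odd integer squares to $1$ modulo $8$, so no special choice of $a_{i,j}$ is needed.

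For \eqref{eq i j k l} your plan would work but is more laborious than the paper's. Your first instinct, to invoke Proposition~\ref{prop N}(f), does not apply directly: that proposition treats commutators between $\varpi_{2^{n+1}}(\Gamma(2^s))$ and $\varpi_{2^{n+1}}(\Gamma(2^{n-s}))$ with $s \geq 1$, whereas here neither $t_{a_{i,j}}$ nor $t_{a_{k,l}}$ lies in $\Gamma(2)$. Your fallback---expanding $T_a T_b - T_b T_a$ and checking the $N$-condition on coefficients---is valid in principle. The paper instead splits on whether $\langle a_{i,j}, a_{k,l}\rangle \equiv 0$ or $2 \pmod 4$: in the first case $t_{a_{i,j}}$ and $t_{a_{k,l}}$ already commute modulo $4$; in the second case one verifies the identity
\[
t_{a_{i,j}} t_{a_{k,l}} t_{a_{i,j}}^{-1} t_{a_{k,l}}^{-1} \equiv t_{a_{i,j}+a_{k,l}}^2\, t_{a_{i,j}}^2\, t_{a_{k,l}}^2 \pmod 4,
\]
and then Lemma~\ref{lemma compliment} expands $T_{\{i,j,k,l\}}$ so that this element is recognized as $\Delta_{i,j,k} + \Delta_{j,k,l} \in N$. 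This case split together with the closed-form commutator identity replaces the coefficient bookkeeping you anticipate.
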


\begin{proof}

To verify the relation (\ref{eq i j k}) holds, we apply the formula $v \mapsto v \pm \langle v, w \rangle w$ which defines the operators $t_w^{\pm 1}$ to $w = a_{i, j}, a_{i, k}$ and compose these formulas appropriately to get a formula for $t_{a_{i, j}}t_{a_{i, k}}t_{a_{i, j}}$.  Using the fact that $\langle a_{i, j}, a_{i, k} \rangle^2 \equiv 1$ (mod $8$), one checks that this formula reduces modulo $8$ to 
\begin{equation}
t_{a_{i, j}} t_{a_{i, j}} t_{a_{i, j}}(v) \equiv v + \langle v, a_{i, j} \rangle a_{i, j} + \langle v, a_{i, k} \rangle a_{i, k} + \langle v, a_{i, j} \rangle \langle a_{i, j}, a_{i, k} \rangle a_{i, k} + \langle v, a_{i, k} \rangle \langle a_{i, k}, a_{i, j} \rangle a_{i, j}.
\end{equation}
  The desired relation can now be seen from the fact that $a_{i, j}$ and $a_{i, k}$ are interchangeable in the above expression.

It is clear that $\langle a_{i, j}, a_{k, l} \rangle$ is equivalent modulo $4$ to either $2$ or $0$.  To verify the relation (\ref{eq i j k l}) in the former case, one checks by composing formulas for the transevections (and their inverses) $t_{a_{i, j}}^{\pm 1}$ and $t_{a_{i, k}}^{\pm 1}$ as above that these operators commute modulo $4$.  For the latter case, one composes the same formulas defining transvections and their inverses in order to get a formula for the commutator $t_{a_{i, j}} t_{a_{k, l}}t_{a_{i, j}}^{-1}t_{a_{k, l}}^{-1}$ and show that its reduction modulo $4$ is the same as that of the formula for $t_{a_{i, j} + a_{k, l}}^2 t_{a_{i, j}}^2 t_{a_{k, l}}^2$.  Meanwhile, using Lemma \ref{lemma compliment} we compute that the image of $t_{a_{i, j} + a_{k, l}}^2 t_{a_{i, j}}^2 t_{a_{k, l}}^2$ in $\varpi_4(\Gamma(2))$ is equal to $\Delta_{i, j, k} + \Delta_{j, k, l} \in N$, and thus it lies in the kernel of $\pi \circ \varpi_4$, as desired.

\end{proof}

\begin{dfn} \label{dfn quasi-cocycle}

A (level-$4$) \textit{quasi-cocycle} is a map $\varphi : \mathfrak{S}_d \to M$ satisfying the following condition: 
$$\varphi(\sigma \tau) = \varphi(\sigma) + ^{\sigma}\!\!\varphi(\tau) + \pi \circ \varpi_4(\tilde{\sigma}\tilde{\tau}(\widetilde{\sigma\tau})^{-1}).$$

\end{dfn}

It is easy to check from Definition \ref{dfn quasi-cocycle} that any quasi-cocycle takes the trivial permutation to $0 \in M$.  Our motivation for introducing quasi-cocycles is the following lemma, which says that finding a subgroup of $\varpi_{4 \to 2}^{-1}(\mathfrak{S}_d)$ surjecting onto $\mathfrak{S}_d$ whose intersection with $\varpi_4(\Gamma(2))$ coincides with $N$ amounts to furnishing a quasi-cocycle.

\begin{lemma} \label{lemma quasi-cocycle}

Suppose that $H \subset \varpi_{4 \to 2}^{-1}(\mathfrak{S}_d)$ is a subgroup generated by a set $\{u_{\sigma}\}_{\sigma \in \mathfrak{S}_d}$ with $\varpi_{4 \to 2}(u_{\sigma}) = \sigma$.  Let $\varphi : \mathfrak{S}_d \to M$ be the map given by $\sigma \mapsto \pi(u_{\sigma} \varpi_4(\tilde{\sigma})^{-1})$.  Then we have $H \cap \varpi_4(\Gamma(2)) = N$ if and only if $\varphi$ is a quasi-cocycle.  In fact, there is a one-to-one correspondence between quasi-cocycles and such subgroups $H \subset \varpi_{4 \to 2}^{-1}(\mathfrak{S}_d)$: this correspondence is given by sending a quasi-cocycle $\varphi : \mathfrak{S}_d \to M$ to the subgroup generated by the elements $\phi_{\sigma} \varpi_4(\tilde{\sigma})$, where $\phi_{\sigma} \in \varpi_4(\Gamma(2))$ is any element such that $\pi(\phi_{\sigma}) = \varphi(\sigma)$ (such a subgroup $H$ is well defined regardless of our choices of $\phi_{\sigma} \in \pi^{-1}(\varphi(\sigma))$ since Lemma \ref{lemma contains N} implies that $H \supset N = \ker(\pi)$).

\end{lemma}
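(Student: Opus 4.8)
The plan is to reduce the whole statement to a single identity in the elementary abelian $2$-group $\varpi_4(\Gamma(2))$ and then read off both the equivalence and the bijection from it. Write $\phi_\sigma := u_\sigma \varpi_4(\tilde\sigma)^{-1} \in \varpi_4(\Gamma(2))$, so that $u_\sigma = \phi_\sigma\varpi_4(\tilde\sigma)$ and $\varphi(\sigma) = \pi(\phi_\sigma)$. First I would compute, for $\sigma,\tau\in\mathfrak{S}_d$,
$$u_\sigma u_\tau u_{\sigma\tau}^{-1} = \phi_\sigma\cdot\big(\varpi_4(\tilde\sigma)\phi_\tau\varpi_4(\tilde\sigma)^{-1}\big)\cdot\varpi_4\big(\tilde\sigma\tilde\tau(\widetilde{\sigma\tau})^{-1}\big)\cdot\phi_{\sigma\tau}^{-1},$$
all four factors lying in the abelian group $\varpi_4(\Gamma(2))$ (for the third, note its image modulo $2$ is $\sigma\tau(\sigma\tau)^{-1}=1$). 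By Proposition \ref{prop 2-group}(b) conjugation by $\varpi_4(\tilde\sigma)$ induces the $S_d$-action on $\varpi_4(\Gamma(2))$, so applying the homomorphism $\pi$ (and using that $N$ is $S_d$-invariant, so the action descends, and that $M$ is $2$-torsion) yields
$$\pi\big(u_\sigma u_\tau u_{\sigma\tau}^{-1}\big) = \varphi(\sigma) + {}^{\sigma}\!\varphi(\tau) + \varphi(\sigma\tau) + \pi\circ\varpi_4\big(\tilde\sigma\tilde\tau(\widetilde{\sigma\tau})^{-1}\big).$$
Thus $\varphi$ is a quasi-cocycle precisely when $u_\sigma u_\tau u_{\sigma\tau}^{-1}\in N$ for all $\sigma,\tau$; the case $\sigma=\tau=1$ together with Hypothesis \ref{hyp lifts}(a) also shows $\varphi(1)=0$ for any quasi-cocycle.

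With this in hand the equivalence is short. For the forward implication, if $H\cap\varpi_4(\Gamma(2)) = N$ then each $u_\sigma u_\tau u_{\sigma\tau}^{-1}$, being a word in the generators of $H$ and lying in $\varpi_4(\Gamma(2))$, belongs to $N$, so $\varphi$ is a quasi-cocycle. For the converse I would first observe that $N$ is normal in $\varpi_{4\to2}^{-1}(\mathfrak{S}_d)$ — it is $S_d$-invariant by Proposition \ref{prop N}(a) and is contained in the abelian group $\varpi_4(\Gamma(2))$, and these two subgroups generate $\varpi_{4\to2}^{-1}(\mathfrak{S}_d)$ — and then pass to $\overline{\mathcal{G}} := \varpi_{4\to2}^{-1}(\mathfrak{S}_d)/N$, which sits in an extension $1\to M\to\overline{\mathcal{G}}\to\mathfrak{S}_d\to1$. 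The quasi-cocycle condition says exactly that $\sigma\mapsto\bar u_\sigma$ is a homomorphism, i.e. a splitting of this extension; hence the image of $H$ in $\overline{\mathcal G}$ is the image of an injective splitting and meets $M$ trivially, forcing $H\cap\varpi_4(\Gamma(2))\subseteq N$. Since $\varpi_{4\to2}(H) = \mathfrak{S}_d$, Lemma \ref{lemma contains N} supplies the reverse inclusion, so $H\cap\varpi_4(\Gamma(2)) = N$.

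Finally I would assemble the bijection. To a subgroup $H$ with $\varpi_{4\to2}(H)=\mathfrak{S}_d$ and $H\cap\varpi_4(\Gamma(2))=N$, associate $\varphi_H(\sigma):=\pi(u_\sigma\varpi_4(\tilde\sigma)^{-1})$ for any lifts $u_\sigma\in H$ of $\sigma$; this is independent of the choice because two such lifts differ by an element of $H\cap\varpi_4(\Gamma(2))=N=\ker\pi$, and it is a quasi-cocycle by the equivalence above (and by Corollary \ref{cor contains N}, $H$ is recovered as the group generated by any such $\{u_\sigma\}$). Conversely, to a quasi-cocycle $\varphi$ associate $H_\varphi:=\langle\phi_\sigma\varpi_4(\tilde\sigma):\sigma\in\mathfrak{S}_d\rangle$ for any lifts $\phi_\sigma\in\varpi_4(\Gamma(2))$ of $\varphi(\sigma)$; since $\varpi_{4\to2}(\phi_\sigma\varpi_4(\tilde\sigma))=\sigma$ regardless of the choice, Lemma \ref{lemma contains N} gives $N\subseteq H_\varphi$, whence $H_\varphi$ is independent of the $\phi_\sigma$, and $\varphi_{H_\varphi}=\varphi$ by construction, so the equivalence gives $H_\varphi\cap\varpi_4(\Gamma(2))=N$. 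These assignments are mutually inverse: $\varphi_{H_\varphi}=\varphi$ is immediate, and $H_{\varphi_H}=H$ because each generator $\phi_\sigma\varpi_4(\tilde\sigma)$ of $H_{\varphi_H}$ differs from $u_\sigma\in H$ by an element of $N\subseteq H$, while conversely the $u_\sigma$ generate $H$ and lie in $H_{\varphi_H}\supseteq N$.

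I expect the only genuine content to be the converse half of the equivalence, namely that the quasi-cocycle relation prevents $H\cap\varpi_4(\Gamma(2))$ from being larger than $N$; everything else is bookkeeping with the abelian group $\varpi_4(\Gamma(2))$ and an appeal to Lemma \ref{lemma contains N}. The extension-theoretic reformulation — a level-$4$ quasi-cocycle is the same datum as a group-theoretic section of $\overline{\mathcal G}\to\mathfrak{S}_d$ — makes this transparent, the one point requiring care being the verification that $N$ is normal in $\varpi_{4\to2}^{-1}(\mathfrak{S}_d)$ so that $\overline{\mathcal G}$ is defined; alternatively one can argue directly that $\{u_\rho n:\rho\in\mathfrak{S}_d,\ n\in N\}$ is all of $H$ using $u_\sigma u_\tau\in N u_{\sigma\tau}$ and $u_1\in N$, and then intersect with $\varpi_4(\Gamma(2))$.
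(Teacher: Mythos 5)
Your proof is correct, and the converse half of the equivalence takes a genuinely different (and arguably cleaner) route than the paper's. The paper argues directly: given $u\in H\cap\varpi_4(\Gamma(2))$, it writes $u$ as a word $u_{\sigma_1}\cdots u_{\sigma_r}$ with $\sigma_1\cdots\sigma_r=1$ and then collapses the word modulo $N$ by repeatedly applying the quasi-cocycle relation, landing on $u\equiv u_{(1)}\equiv 1\pmod N$. You instead observe up front that $N$ is normal in $\varpi_{4\to 2}^{-1}(\mathfrak{S}_d)$ (a fact the paper also invokes later, in the proof of Theorem~\ref{thm mod 4}), pass to the extension $1\to M\to\overline{\mathcal G}\to\mathfrak{S}_d\to 1$, and reinterpret the quasi-cocycle condition as saying that $\sigma\mapsto\bar u_\sigma$ is a group-theoretic section; then $\bar H$ is a complement to $M$ in $\overline{\mathcal G}$ and $H\cap\varpi_4(\Gamma(2))\subseteq N$ is automatic. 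This reformulation buys conceptual clarity (a level-$4$ quasi-cocycle \emph{is} a splitting of $\overline{\mathcal G}\to\mathfrak{S}_d$) and avoids the paper's somewhat awkward "repeated applications" step, where one must implicitly use $u_{(1)}\in N$; the price is needing to establish the normality of $N$ at this stage rather than later. Your forward direction and the bijection bookkeeping are essentially the same as the paper's (the forward direction is the same computation of $u_\sigma u_\tau u_{\sigma\tau}^{-1}$ pushed through $\pi$, and the bijection uses Lemma~\ref{lemma contains N} and Corollary~\ref{cor contains N} in the same way), though you spell out the well-definedness and mutual-inverse checks more explicitly than the paper does.
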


\begin{proof}

Let $H \subset \varpi_{4 \to 2}^{-1}(\mathfrak{S}_d)$ be a subgroup generated by elements $u_{\sigma}$ as in the statement.  Suppose that we have $H \cap \varpi_4(\Gamma(2)) = N$.  For each $\sigma \in \mathfrak{S}_d$, let $\phi_{\sigma} = u_{\sigma} \varpi_4(\tilde{\sigma})^{-1}$, and let $\varphi : \mathfrak{S}_d \to M$ be the map given by $\sigma \mapsto \pi(\phi_{\sigma})$.  Let $\sigma, \tau$ be any two elements of $\mathfrak{S}_d$.  Then we have 
\begin{equation}
u_{\sigma} u_{\tau}u_{\sigma\tau}^{-1} =  \phi_{\sigma} \varpi_4(\tilde{\sigma})\phi_{\tau} \varpi_4(\tilde{\tau})\varpi_4(\widetilde{\sigma\tau})^{-1}\phi_{\sigma\tau}^{-1}.
\end{equation}
  Since $\varpi_{4 \to 2}(u_{\sigma} u_{\tau}u_{\sigma\tau}^{-1}) = 1 \in \mathfrak{S}_d$, we have $u_{\sigma} u_{\tau}u_{\sigma\tau}^{-1} \in H \cap \varpi_4(\Gamma(2)) = N$ and thus, we have 
\begin{equation}
\phi_{\sigma\tau} \equiv \phi_{\sigma} \varpi_4(\tilde{\sigma})\phi_{\tau} \varpi_4(\tilde{\tau})\varpi_4(\widetilde{\sigma\tau})^{-1} = \phi_{\sigma}\ \!\! ^{\sigma}\!\phi_{\tau} \varpi_4(\tilde{\sigma})\varpi_4(\tilde{\tau})\varpi_4(\widetilde{\sigma\tau})^{-1} \ \ \ \ (\mathrm{mod} \ N).
\end{equation}
  Writing this as an equation of the images in $M$ yields precisely the condition in Definition \ref{dfn quasi-cocycle}.

Conversely, suppose that $\varphi : \mathfrak{S}_d \to M$ is a quasi-cocycle.  Let $u \in H$ be any element with $\varpi_{4 \to 2}(u) = 1$.  Then $u$ can be written as a product $u_{\sigma_1} ... u_{\sigma_r}$ for some (not necessarily distinct) elements $\sigma_1, ... , \sigma_r \in \mathfrak{S}_d$ with $\sigma_1 ... \sigma_r = 1$.  It follows from writing $u_{\sigma_i}\varpi_4(\tilde{\sigma})^{-1} \in \varphi(\sigma_i) + N$ and by relating $u_{\sigma_1 ... \sigma_r}$ with $u_{\sigma_1} ... u_{\sigma_r}$ by repeated applications of the condition in Definition \ref{dfn quasi-cocycle} that we have  
\begin{equation}
N \ni u_{(1)} = u_{\sigma_1 ... \sigma_r} \equiv u_{\sigma_1} ... u_{\sigma_r} = u \ \ \ \ (\mathrm{mod} \ N),
\end{equation}
 which implies that $u \in N$.  Since $u$ was chosen as an arbitrary element of $H \cap \varpi_4(\Gamma(2))$ and we have $N \subseteq H \cap \varpi_4(\Gamma(2))$ from Lemma \ref{lemma contains N}, we get the desired statement.

Finally, to prove the one-to-one correspondence, we note that by Corollary \ref{cor contains N}, a subgroup $H \subset \varpi_{4 \to 2}^{-1}(\mathfrak{S}_d)$ satisfying $\varpi_{4 \to 2}(H) = \mathfrak{S}_d$ and $H \cap \varpi_4(\Gamma(2)) = N$ is generated by any choice of elements $u_{\sigma} \in \varpi_{4 \to 2}^{-1}(\sigma)$ and therefore is induced by the quasi-cocycle $\varphi : \sigma \mapsto \pi(u_{\sigma} \varpi_4(\tilde{\sigma})^{-1})$.  Moreover, this quasi-cocycle is unique: suppose that $\varphi$ and $\varphi'$ are two quasi-cocycles which give rise to the subgroup $H \subset \varpi_{4 \to 2}^{-1}(\mathfrak{S}_d)$ in this way.  Then for any $\sigma \in \mathfrak{S}_d$, we have that $u_{\sigma} := \phi_{\sigma} \varpi_4(\tilde{\sigma})$ and $u'_{\sigma} := \phi'_{\sigma} \varpi_4(\tilde{\sigma})$ are both elements of $H$ (where $\pi(\phi_{\sigma}) = \varphi(\sigma)$ and $\pi(\phi'_{\sigma}) = \varphi'(\sigma)$), and so we have $\phi'_{\sigma}\phi_{\sigma}^{-1} \in H \cap \varpi_4(\Gamma(2)) = N$.  Thus, we have $\phi_{\sigma} \equiv \phi'_{\sigma}$ (mod $N$), or equivalently, $\varphi(\sigma) = \varphi'(\sigma)$, and so the quasi-cocycles $\varphi$ and $\varphi'$ are equal.

\end{proof}

\begin{lemma} \label{lemma quasi-cocycle2}

Let $\varphi : \mathfrak{S}_d \to M$ be a quasi-cocycle.  Then the map $\varphi$ satisfies the following conditions.

(i) We have $\langle \varphi((i, j)), v_{\{i, j\}} \rangle = 1$ for $1 \leq i < j \leq d$.

(ii) We have $\langle \varphi((i, j)), v_{\{k, l\}} \rangle = 0$ for distinct $i, j, k, l$.

Conversely, suppose that we define a map $\varphi : \mathfrak{S}_d \to M$ as follows.  We first assign values of $\varphi((1, j))$ for $2 \leq j \leq d$ which satisfy conditions (i) and (ii) and then, for each $\sigma \in \mathfrak{S}_d \smallsetminus \{(1, j)\}_{2 \leq j \leq d}$, we write $\sigma$ as a product of the generators $(1, j)$ and apply the condition given in Definition \ref{dfn quasi-cocycle} to determine $\varphi(\sigma)$.  Then the map $\varphi$ is well defined (i.e. it does not depend on the choice of presentation of each $\sigma$ as a product of generators), and $\varphi$ is a quasi-cocycle.

\end{lemma}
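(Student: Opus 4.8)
The plan is to verify the two necessary conditions first, then address the well-definedness and quasi-cocycle property of the reconstruction. For the necessary conditions (i) and (ii), I would take $\sigma = \tau = (i,j)$ in the quasi-cocycle relation of Definition \ref{dfn quasi-cocycle}. Since $(i,j)^2 = 1$ and $\varphi((1)) = \pi \circ \varpi_4(\tilde{(1)}) = 0$, the relation becomes $0 = \varphi((i,j)) + {}^{(i,j)}\varphi((i,j)) + \pi \circ \varpi_4(\widetilde{(i,j)}^2)$. Under Hypothesis \ref{hyp lifts}(b), $\widetilde{(i,j)} = t_{a_{i,j}}$, so $\widetilde{(i,j)}^2 = t_{a_{i,j}}^2 = 1 + 2T_{\{i,j\}}$, whose image in $M$ under $\pi \circ \varpi_4$ is the class of $[i,j]$, i.e. (via the isomorphism $V/2V \stackrel{\sim}{\to} M$ of Proposition \ref{prop N}(c)) the vector $v_{\{i,j\}}$. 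Thus $\varphi((i,j)) + {}^{(i,j)}\varphi((i,j)) = v_{\{i,j\}}$ in $M \cong V/2V$. Writing $w := \varphi((i,j))$ and using that $(i,j)$ acts on $V/2V$ as the transvection $t_{v_{\{i,j\}}}$ (Proposition \ref{prop N}(c) again), we get $w + t_{v_{\{i,j\}}}(w) = w + (w + \langle w, v_{\{i,j\}}\rangle v_{\{i,j\}}) = \langle w, v_{\{i,j\}}\rangle v_{\{i,j\}} = v_{\{i,j\}}$, which forces $\langle w, v_{\{i,j\}} \rangle = 1$, giving (i). For (ii), I would apply the quasi-cocycle relation with $\sigma = (i,j)$ and $\tau = (k,l)$ for distinct $i,j,k,l$, and also with the roles swapped, using that these two transpositions commute in $\mathfrak{S}_d$ so that $\sigma\tau = \tau\sigma$. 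Comparing the two expressions for $\varphi(\sigma\tau)$, and using Lemma \ref{lemma lift identities}, equation (\ref{eq i j k l}), which says precisely that $\pi \circ \varpi_4(\widetilde{(i,j)}\widetilde{(k,l)}) = \pi \circ \varpi_4(\widetilde{(k,l)}\widetilde{(i,j)})$, one extracts the relation $\varphi((i,j)) + {}^{(i,j)}\varphi((k,l)) = \varphi((k,l)) + {}^{(k,l)}\varphi((i,j))$; feeding in condition (i) for both transpositions and the fact that each acts on $V/2V$ as a transvection with respect to a vector orthogonal to the other (since $\#(\{i,j\}\cap\{k,l\}) = 0$), a short computation isolates $\langle \varphi((i,j)), v_{\{k,l\}}\rangle v_{\{k,l\}} + \langle \varphi((k,l)), v_{\{i,j\}}\rangle v_{\{i,j\}} = 0$, and since $v_{\{i,j\}}, v_{\{k,l\}}$ are linearly independent in $V/2V$ this gives $\langle \varphi((i,j)), v_{\{k,l\}}\rangle = 0$, which is (ii).

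For the converse, the approach is to use the correspondence of Lemma \ref{lemma quasi-cocycle}: rather than check directly that the reconstructed $\varphi$ is a well-defined quasi-cocycle, I would instead build the subgroup $H$ it ought to correspond to and show $H$ has the right properties. Concretely, set $u_{(1,j)} := \phi_{(1,j)} \varpi_4(\tilde{(1,j)})$ where $\pi(\phi_{(1,j)}) = \varphi((1,j))$ for $2 \leq j \leq d$, and let $H \subset \varpi_{4 \to 2}^{-1}(\mathfrak{S}_d)$ be the subgroup generated by these $d-1$ elements together with $N$ (recall $N \subset H$ automatically by Lemma \ref{lemma contains N} once $\varpi_{4\to 2}(H) = \mathfrak{S}_d$, which holds since the images of the $u_{(1,j)}$ are the standard transposition generators $(1,j)$ of $S_d$). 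The key claim is then $H \cap \varpi_4(\Gamma(2)) = N$. Given this, Corollary \ref{cor contains N} says $H$ is generated by any lifts $u_\sigma$ of the elements $\sigma \in \mathfrak{S}_d$, and then Lemma \ref{lemma quasi-cocycle} furnishes a genuine quasi-cocycle $\psi: \sigma \mapsto \pi(u_\sigma \varpi_4(\tilde\sigma)^{-1})$; taking the $u_\sigma$ to be the specific products of the generators $u_{(1,j)}$ dictated by the chosen presentations, one checks that $\psi$ agrees with the reconstructed $\varphi$ on the generators $(1,j)$ (since $\psi((1,j)) = \pi(\phi_{(1,j)}) = \varphi((1,j))$) and, because both $\psi$ and the formal reconstruction propagate via the identical relation of Definition \ref{dfn quasi-cocycle}, they agree everywhere — in particular the reconstruction is well-defined (independent of presentation) and equals the quasi-cocycle $\psi$.

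The main obstacle is establishing $H \cap \varpi_4(\Gamma(2)) = N$ for the reconstructed subgroup, i.e. that no "extra" elements of $\varpi_4(\Gamma(2))$ get dragged in by relations among the generators $u_{(1,j)}$. The inclusion $N \subseteq H \cap \varpi_4(\Gamma(2))$ is Lemma \ref{lemma contains N}; for the reverse, by Proposition \ref{prop N}(d) the intersection $H \cap \varpi_4(\Gamma(2))$ is an $S_d$-invariant subspace containing $N$, hence equals $N$ or all of $\varpi_4(\Gamma(2))$, so it suffices to rule out the latter. Here is where conditions (i) and (ii) do their work: I would show that the image $\pi(H \cap \varpi_4(\Gamma(2))) \subseteq M \cong V/2V$ is $S_d$-invariant and, were it all of $M$, one could produce from the relations $u_{(1,j)}^2 \equiv \phi_{(1,j)}{}^{(1,j)}\phi_{(1,j)} \varpi_4(t_{a_{1,j}}^2) \pmod{\cdots}$ an inconsistency with the pairing constraints — essentially, the only obstruction to the naive reconstruction landing in $N$ comes from the braid and commutation relations $(1,i)(1,j)(1,i) = (1,j)(1,i)(1,j)$ and $(1,i)(1,j) = (1,j)(1,i)$ with $i \neq j$ failing up to something outside $N$, and Lemma \ref{lemma lift identities} (equations (\ref{eq i j k}) and (\ref{eq i j k l})) together with conditions (i), (ii) is exactly what guarantees these relations hold modulo $N$. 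Organizing this into a clean argument — showing the two families of Coxeter relations for the generators $(1,j)$ of $S_d$ are respected modulo $N$ by the lifts $u_{(1,j)}$, so that $H \cap \varpi_4(\Gamma(2))$ cannot strictly contain $N$ — is the crux of the proof.
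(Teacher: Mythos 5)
Your forward direction is correct and matches the paper: taking $\sigma = \tau = (i,j)$ to get (i), and comparing $\varphi(\sigma\tau)$ with $\varphi(\tau\sigma)$ for disjoint transpositions to get (ii), are exactly the computations in the paper's proof.

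The converse is where there are problems. Your framework — build the subgroup $H = \langle u_{(1,j)}, N\rangle$, establish $H \cap \varpi_4(\Gamma(2)) = N$, apply Lemma \ref{lemma quasi-cocycle} to produce a genuine quasi-cocycle $\psi$ agreeing with the reconstruction on generators, and conclude that the reconstruction is well-defined and equals $\psi$ — is a legitimate repackaging of the problem, and you are right that it correctly reduces well-definedness to the single claim $H \cap \varpi_4(\Gamma(2)) = N$. But this reduction buys nothing: establishing that claim requires exactly the same verification that the paper carries out directly. Concretely, the paper shows that the values of $\varphi$ obtained from the two sides of the braid relation $(1,j)(1,k)(1,j) = (1,k)(1,j)(1,k)$ agree, by expanding both sides via Definition \ref{dfn quasi-cocycle}, using the first identity of Lemma \ref{lemma lift identities} to compare the $\pi\circ\varpi_4(\cdots)$ terms, and then checking the resulting difference
$\varphi((1,k)) + \varphi((1,j)) + \langle\varphi((1,j)), v_{\{j,k\}}\rangle v_{\{1,j\}} + \langle\varphi((1,k)), v_{\{j,k\}}\rangle v_{\{1,k\}}$
lies in the kernel of the nondegenerate pairing on $M$, using conditions (i) and (ii). You flag that this type of verification is "the crux of the proof" but do not carry it out; in a blind write-up this is the substantive gap, since everything else in your converse is bookkeeping.

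There is also a concrete error in your sketch. You write that the obstructions come from "the braid and commutation relations $(1,i)(1,j)(1,i)=(1,j)(1,i)(1,j)$ and $(1,i)(1,j)=(1,j)(1,i)$ with $i \neq j$." The second is false: the star transpositions $(1,i)$ and $(1,j)$ never commute for $i \neq j$, as they share the moved point $1$. The relation $(\ref{eq i j k l})$ of Lemma \ref{lemma lift identities} concerns disjoint transpositions $(i,j)$ and $(k,l)$ and is used by the paper only in the forward direction (to prove (ii)); the converse direction in the paper uses only the braid identity $(\ref{eq i j k})$ together with a presentation of $S_d$ by the star generators and the relations $(1,j)^2=1$ and $(1,j)(1,k)(1,j)=(1,k)(1,j)(1,k)$. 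Fixing this confusion and actually carrying out the braid-relation computation would align your argument with the paper's.
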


\begin{proof}

Given any transposition $\sigma = (i, j)$, we compute 
\begin{equation} \label{eq quasi-cocycle1}
0 = \varphi(1) = \varphi(\sigma^2) = \ ^{\sigma}\!\varphi(\sigma) + \varphi(\sigma)+ \pi \circ \varpi_4(\widetilde{(i, j)}^2) = \langle \varphi(\sigma), v_{\{i, j\}} \rangle v_{\{i, j\}} + v_{\{i, j\}},
\end{equation}
 and therefore (i) holds.  Now given any transpositions $\sigma = (i, j)$ and $\tau = (k, l)$ with $\{i, j\} \cap \{k, l\} = \varnothing$ (so $\sigma$ and $\tau$ commute), the second relation given by Lemma \ref{lemma lift identities} says that the elements $\pi \circ \varpi_4(\tilde{\sigma})$ and $\pi \circ \varpi_4(\tilde{\tau})$ commute.  Let $w = \pi \circ \varpi_4(\tilde{\sigma}\tilde{\tau}(\widetilde{\sigma\tau})^{-1}) = \pi \circ \varpi_4(\tilde{\tau}\tilde{\sigma}(\widetilde{\tau\sigma})^{-1}) \in M$.  We compute that 
\begin{align} \label{eq quasi-cocycle2}
\begin{split}
&\varphi(\sigma\tau) = \varphi(\sigma) + ^{\sigma}\!\!\varphi(\tau) + w = \varphi(\sigma) + \varphi(\tau) + \langle \varphi(\tau), v_{\{i, j\}} \rangle v_{\{i, j\}} + w \\
 = \ &\varphi(\tau\sigma) = \varphi(\tau) + ^{\tau}\!\!\varphi(\sigma) + w = \varphi(\tau) + \varphi(\sigma) + \langle \varphi(\sigma), v_{\{k, l\}} \rangle v_{\{k, l\}} + w.
\end{split}
\end{align}
  Thus, we have $\langle \varphi(\tau), v_{\{i, j\}} \rangle v_{\{i, j\}} - \langle \varphi(\sigma), v_{\{k, l\}} \rangle v_{\{k, l\}} = \varphi(\sigma\tau) - \varphi(\tau\sigma) = 0$.  Since $v_{\{i, j\}}$ and $v_{\{k, l\}}$ are linearly independent, this implies that $\langle \varphi(\tau), v_{\{i, j\}} \rangle = \langle \varphi(\sigma), v_{\{k, l\}} \rangle = 0$, and therefore (ii) holds.

Now suppose that we have constructed a map $\varphi : \mathfrak{S}_d \to M$ according to the procedure described in the converse statement.  We recall that a presentation for the symmetric group $S_d$ is given by the generators $(1, 2), ... , (1, d)$ and relations $(1, j)^2 = 1$ for $2 \leq j \leq d$ and $(1, j)(1, k)(1, j) = (1, k)(1, j)(1, k)$ for $2 \leq j < k \leq d$.  Therefore, showing that the map $\varphi$ does not depend on particular presentations of each element $\sigma \in \mathfrak{S}_d \smallsetminus \{(1, j)\}_{2 \leq j \leq d}$ as a product of the $(1, j)$'s amounts to showing that (a) we get the same value for $\varphi((1, j)^2)$ for $2 \leq j \leq d$ (which must be $\varphi((1)) = 0$), and (b) we get the same value for $\varphi((1, j)(1, k)(1, j))$ and $\varphi((1, k)(1, j)(1, k))$ for $1 \leq j < k \leq d$.  We have proven above that (a) is equivalent to condition (i), so we set out to prove (b).

Fix distinct $j, k \in \{2, ... , d\}$.  By applying the condition in Definition \ref{dfn quasi-cocycle} to the product $(1, j)(1, k)(1, j)$ and expanding, we get 
$$\varphi((1, j)(1, k)(1, j)) - \pi \circ \varpi_4(\widetilde{(1, j)}\widetilde{(1, k)}\widetilde{(1, j)}\widetilde{(j, k)}^{-1}) = \varphi((i, j)) + ^{(1, j)}\!\varphi((1, k)) + ^{(1, j)(1, k)}\!\varphi((1, j))$$
\begin{equation} \label{eq quasi-cocycle3}
 = \varphi((1, k)) + \langle \varphi((1, k)), v_{\{1, j\}} \rangle v_{\{1, j\}} + \langle \varphi((1, j)), v_{\{1, k\}} \rangle v_{\{1, k\}} + \langle \varphi((1, j)), v_{\{1, j\}} + v_{\{1, k\}} \rangle v_{\{1, j\}}.
\end{equation}
  By doing the same computation with $j$ and $k$ reversed, subtracting the resulting expression from the one in (\ref{eq quasi-cocycle3}), and using the first identity given by Lemma \ref{lemma lift identities}, we compute the difference $\varphi((1, j)(1, k)(1, j)) - \varphi((1, k)(1, j)(1, k))$ to be 
\begin{equation} \label{eq quasi-cocycle4}
w := \varphi((1, k)) + \varphi((1, j)) + \langle \varphi((1, j)), v_{\{j, k\}} \rangle v_{\{1, j\}} + \langle \varphi((1, k)), v_{\{j, k\}} \rangle v_{\{1, k\}}.
\end{equation}
  We claim that $w = 0 \in M$, as desired.  Due to the nondegeneracy of the pairing $\langle \cdot, \cdot \rangle$ on $M$, this follows from a verification that that $w$ lies in the kernel of the pairing.  It clearly suffices to check that $\langle w, v_I \rangle = 0$ for any $2$-element subset $I \subseteq \{1, ... , d\}$.  Using bilinearity and condition (i), we compute $\langle w, v_{\{1, j\}} \rangle$ to be 
\begin{equation}
\langle \varphi((1, k)), v_{\{1, j\}} \rangle + \langle \varphi((1, j)), v_{\{1, j\}} \rangle + \langle \varphi((1, k)), v_{\{j, k\}} \rangle = \langle \varphi((1, k)), v_{\{1, j\}} + v_{\{j, k\}} \rangle + 1 = 0.
\end{equation}
  By symmetry, we also get $\langle w, v_{\{1, k\}} \rangle = 0$.  Now for any index $i \notin \{1, j, k\}$, using bilinearity and condition (ii), we compute $\langle w, v_{\{i, j\}} \rangle$ to be 
\begin{equation}
\langle \varphi((1, k)), v_{\{i, j\}} \rangle + \langle \varphi((1, j)), v_{\{i, j\}} \rangle + \langle \varphi((1, j)), v_{\{j, k\}} \rangle = 0 + \langle \varphi((1, j)), v_{\{i, k\}} \rangle = 0.
\end{equation}
  By symmetry, we also get $\langle w, v_{\{i, k\}} \rangle = 0$ as well.  Finally, for any distinct indices $i, l \notin \{1, j, k\}$, it is clear that $\langle w, v_{\{i, l\}} \rangle = 0$ from the fact that $v_{\{i, l\}}$ is orthogonal to each term in the expression for $w$ in (\ref{eq quasi-cocycle4}) (here we again use condition (ii)), and our claim is proved.

The map $\varphi$ which we have obtained in the above way is clearly a quasi-cocycle by construction, and the proposition is proved.

\end{proof}

\subsection{The possible images of $G$ modulo $4$ when $d$ is even} \label{sec3.3}

In this subsection we will show that, under the hypotheses of Theorem \ref{thm main} and the assumption that $d = 2g + 2$, the subgroup $G \subset \Sp(V)$ must be the full preimage $\varpi_4^{-1}(\mathfrak{S}_d)$.  We first need the following simple lemma.

\begin{lemma} \label{lemma nonexistence}

There does not exist a quasi-cocycle $\mathfrak{S}_{2g + 2} \to M$.

\end{lemma}

\begin{proof}

Suppose that we have a quasi-cocycle $\varphi : \mathfrak{S}_{2g + 2} \to M$.  Then condition (ii) in the statement of Lemma \ref{lemma quasi-cocycle2} implies that $\langle \varphi((1, 2)), \sum_{3 \leq i < j \leq 2g + 2} v_{\{i, j\}} \rangle = 0$.  But since we have $v_{\{1, 2\}} = \sum_{3 \leq i < j \leq 2g + 2} v_{\{i, j\}}$ by Lemma \ref{lemma compliment}, this contradicts condition (i) given by Lemma \ref{lemma quasi-cocycle2}.  Therefore there is no such quasi-cocycle.

\end{proof}

It is now easy to prove Theorem \ref{thm main}(a).

\begin{proof}[Proof of Theorem \ref{thm main}(a)]

Assume that $d = 2g + 2$.  Lemma \ref{lemma sufficient} says that it suffices to show that the group $H := \varpi_4(G)$ satisfies $H \supset \varpi_4(\Gamma(2))$.  Suppose that $H$ does not contain $\varpi_4(\Gamma(2))$.  Then it follows from Lemma \ref{lemma contains N} that we must have $H \cap \varpi_4(\Gamma(2)) = N$, and so $H$ corresponds to a quasi-cocycle $\varphi : \mathfrak{S}_{2g + 2} \to M$ as given by Lemma \ref{lemma quasi-cocycle}.  Since such a quasi-cocycle does not exist by Lemma \ref{lemma nonexistence}, we get a contradiction.

\end{proof}

\subsection{The possible images of $G$ modulo $4$ when $d$ is odd} \label{sec3.4}

The following theorem classifies the possible modulo-$4$ images of $G$ under the hypotheses of Theorem \ref{thm main} and the assumption that $d = 2g + 1$ and brings us a step closer to proving part (b) of that theorem.

\begin{thm} \label{thm mod 4}

For each $\mathbf{c} = (c_2, ... , c_{2g + 1}) \in \ff_2^{2g}$, let $\varphi_{\mathbf{c}} : \mathfrak{S}_{2g + 1} \to M$ be the map defined by assigning $\varphi_{\mathbf{c}}((1, j)) = c_j v_{\{1, j\}} + v_{\{2, ... , 2g + 1\}}$ for $2 \leq j \leq 2g + 1$ and determining $\varphi_{\mathbf{c}}(\sigma)$ for $\sigma \in \mathfrak{S}_{2g + 1} \smallsetminus \{(1, j)\}_{2 \leq j \leq 2g + 1}$ as in the second statement of Lemma \ref{lemma quasi-cocycle2} (by that lemma, this map is a well-defined quasi-cocycle).  Then every quasi-cocycle $\varphi : \mathfrak{S}_{2g + 1} \to M$ is equal to $\varphi_{\mathbf{c}}$ for some $\mathbf{c} \in \ff_2^{2g}$.  Therefore, there are precisely $2^{2g}$ subgroups $H \subset \varpi_{4 \to 2}^{-1}(\mathfrak{S}_{2g + 1})$ satisfying $\varpi_{4 \to 2}(H) = \mathfrak{S}_{2g + 1}$ and $H \cap \varpi_4(\Gamma(2)) = N$.  Moreover, the set of these subgroups forms a full conjugacy class of subgroups of $\varpi_{4 \to 2}^{-1}(\mathfrak{S}_{2g + 1})$.

\end{thm}

\begin{proof}

It is immediate to verify that the $2^{2g}$ (distinct) maps $\varphi_{\mathbf{c}} : \mathfrak{S}_{2g + 1} \to M$ satisfy conditions (i) and (ii) given by Lemma \ref{lemma quasi-cocycle2} for the transpositions $(1, j)$ and are therefore quasi-cocycles.  Now let $\varphi$ be any quasi-cocycle satisfying those conditions (i) and (ii) for the transpositions $(1, j)$.  Then for each $j$, those conditions (i), (ii) imply that $\langle (\varphi - \varphi_{\mathbf{0}})((1, j)), v_{\{1, j\}} \rangle = \langle (\varphi - \varphi_{\mathbf{0}})((1, j)), v_{\{k, l\}} \rangle = 0$ for all $k, l \notin \{1, j\}$.  Thus, the vector $(\varphi - \varphi_{\mathbf{0}})((1, j)) \in M$ lies in the orthogonal complement of the subspace of $M$ generated by $\{v_{\{1, j\}}\} \cup \{v_{\{k, l\}}\}_{k, l \notin \{1, j\}}$, which clearly coincides with the subspace spanned by the vector $v_{\{1, j\}}$.  It follows that $(\varphi - \varphi_{\mathbf{0}})((1, j)) = c_j v_{\{1, j\}}$ for some $c_j \in \ff_2$, which implies that $\varphi = \varphi_{\mathbf{c}}$ where $\mathbf{c} = (c_2, ... , c_{2g + 1})$.  The fact that these correspond to $2^{2g}$ distinct subgroups $H$ as in the statement now follows from Lemma \ref{lemma quasi-cocycle}.

We now prove the statement about conjugacy.  It is clear that conjugating any subgroup $H$ satisfying $\varpi_{4 \to 2}(H) = \mathfrak{S}_{2g + 1}$ and $H \cap \varpi_4(\Gamma(2)) = N$ by any element in $\varpi_{4 \to 2}^{-1}(\mathfrak{S}_{2g + 1})$ yields another subgroup with those same properties (here we are using the fact that the $S_{2g + 1}$-invariance of $N$ implies that $N \lhd \varpi_{4 \to 2}^{-1}(\mathfrak{S}_{2g + 1})$).  It remains to show that all such subgroups are conjugate.  For each $\mathbf{c} \in \ff_2^{2g}$, we denote the subgroup of $\varpi_2^{-1}(\mathfrak{S}_d)$ corresponding to $\varphi_{\mathbf{c}}$ via Lemma \ref{lemma quasi-cocycle} by $H_{\mathbf{c}}$; we shall prove that the subgroups $H_{\mathbf{c}}$ are all conjugate.  Fix a vector $\mathbf{c} \in \ff_2^{2g}$ and choose a set of generators $u_{\sigma}$ for $H_{\mathbf{c}}$ with $\pi(u_{\sigma}\varpi_4(\tilde{\sigma})) = \varphi_{\mathbf{c}}(\sigma)$.  For a choice of distinct $i, j \in \{1, ... , 2g + 1\}$, we have $\varpi_4(\widetilde{(i, j)})^2 H_{\mathbf{c}} \varpi_4(\widetilde{(i, j)})^{-2} = H_{\mathbf{c}'}$ for some $\mathbf{c}' = (c'_2, ... , c'_{2g + 1}) \in \ff_2^{2g}$.  We claim that we have $\mathbf{c}' - \mathbf{c} = (\langle v_{\{i, j\}}, v_{\{1, k\}} \rangle)_{2 \leq k \leq 2g + 1}$.  Indeed, for $k \in \{2, ... , 2g + 1\}$, setting $\phi_{(1, k)} = u_{(1, k)}\varpi_4(\widetilde{(1, k)})$, we compute that $\varpi_4(\widetilde{(i, j)}^2) u_{(1, k)} \varpi_4(\widetilde{(i, j)}^{-2}) = \varpi_4(\widetilde{(i, j)}^2) \phi_{(1, k)} \varpi_4(\widetilde{(1, k)} \widetilde{(i, j)}^{-2})$ 
\begin{equation} \label{eq conjugacy class}
 = \varpi_4(\widetilde{(i, j)}^2) \phi_{(1, k)} \varpi_4 \big( \widetilde{(1, k)}\widetilde{(i, j)}^{-2}\widetilde{(1, k)}^{-1}) \varpi_4(\widetilde{(1, k)}) = \varpi_4 \big(\widetilde{(i, j)}^2(\widetilde{(1, k)}\widetilde{(i, j)}^{-2}\widetilde{(1, k)}^{-1}) \big) u_{(1, k)}.
\end{equation}
  We know that $\varpi_4(\widetilde{(i, j)})^2 u_{(1, k)} \varpi_4(\widetilde{(i, j)})^{-2}$ has image $(1, k) \in \mathfrak{S}_d$ under $\pi$ and therefore is equal to $\phi'_{(1, k)} \varpi_4(\widetilde{(1, k)}) \in H_{\mathbf{c}'}$ for some $\phi'_{(1, k)} \in \varpi_4(\Gamma(2))$ with $\pi(\phi'_{(1, k)}) = \varphi_{\mathbf{c}'}((1, k)) \in M$.  Multiplying the expression on the right-hand side of (\ref{eq conjugacy class}) by $\varpi_4(\widetilde{(1, k)})^{-1}$ and reducing modulo $N$ to get an expression for $\varphi_{\mathbf{c}'}((1, k))$ yields $v_{\{i, j\}} + ^{(1, k)}\!\!v_{\{i, j\}} + \varphi((1, k)) = \langle v_{\{i, j\}}, v_{\{1, k\}} \rangle v_{\{1, k\}} + \varphi((1, k)) \in M$.  We therefore have $c'_k - c_k = \langle v_{\{i, j\}}, v_{\{1, k\}} \rangle$, thus proving the claim.

The above claim implies by linearity that for any $u \in \varpi_4(\Gamma(2))$, if $\mathbf{c}' \in \ff_2^{2g}$ is the vector such that $u H_{\mathbf{c}} u^{-1} = H_{\mathbf{c}'}$, then we have $\mathbf{c}' - \mathbf{c} = (\langle \pi(u), v_{\{1, k\}} \rangle)_{2 \leq k \leq 2g + 1}$.  Now it follows from the nondegeneracy of the pairing $\langle \cdot, \cdot \rangle$ on $M$ and the fact that the $v_{\{1, k\}}$'s form a basis for $M$ that given any $\mathbf{c}, \mathbf{c}' \in \ff_2^{2g}$, we may choose an element $u \in \varpi_4(\Gamma(2))$ such that $(\langle \pi(u), v_{\{1, k\}} \rangle)_{2 \leq k \leq 2g + 1} = \mathbf{c}' - \mathbf{c}$ and thus $u H_{\mathbf{c}} u^{-1} = H_{\mathbf{c}'}$.  The desired statement is proved.

\end{proof}

We conclude the section with the following remark, which will be useful in \S\ref{sec4.2}.

\begin{rmk} \label{rmk quasi-cocycle}

For any $\mathbf{c} \in \ff_2^{2g}$, it is easy to show directly from properties (i) and (ii) given in the statement of Lemma \ref{lemma quasi-cocycle2} that the subset $I \subset \{1, ... , 2g + 1\}$ such that $\varpi_{\mathbf{c}}((i, j)) = v_I$ satisfies the property that $\{1, ... , 2g + 1\} \smallsetminus I$ is a singleton subset of $\{i, j\}$.

\end{rmk}

\section{Lifting to $\zz / 8\zz$} \label{sec4}

For the rest of the paper, we assume that $g \geq 2$ and that $d = 2g + 1$.  We retain all previous notation and in particular our fixed choices of $a_{i, j} \in V$ from earlier, with $\widetilde{(i, j)} = t_{a_{i, j}} \in \Sp(V)$.  In this subsection we shall determine all lifts to $\Sp(V / 8V)$ of each of the subgroups we classified by Theorem \ref{thm mod 4}(b).  We begin by determining the possible intersections of such lifts with $\varpi_8(\Gamma(2))$.

\subsection{The possible images of $G \cap \Gamma(2)$ modulo $8$} \label{sec4.1}

We first show that the only possibilities for the subgroup $\varpi_8(G \cap \Gamma(4)) \subseteq \varpi_8(\Gamma(4))$ are that it is the full $\varpi_8(\Gamma(4))$ or that it coincides with $N^{(4)}$.

\begin{lemma} \label{lemma contains N mod 8}

Suppose that $H \subset \varpi_{8 \to 2}^{-1}(\mathfrak{S}_d)$ is a subgroup satisfying $\varpi_{8 \to 2}(\tilde{H}) = \mathfrak{S}_d$.  We have the (strict) containment $N^{(4)} \subset H$.  In fact, we have that $H \not\supset \varpi_8(\Gamma(4))$ implies $H \cap \varpi_8(\Gamma(4)) = N^{(4)}$.

\end{lemma}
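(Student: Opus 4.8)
The plan is to mimic closely the proof of Lemma \ref{lemma contains N} (the level-$4$ analogue), replacing $\varpi_4$ by $\varpi_8$, $N$ by $N^{(4)}$, and the generators $[i,j] = \varpi_4(t_{a_{i,j}}^2)$ by the generators $[i,j] = \varpi_8(t_{a_{i,j}}^4)$ of $\varpi_8(\Gamma(4))$. First I would note that $H \cap \varpi_8(\Gamma(4))$ is an $S_d$-invariant subspace of the $\ff_2$-vector space $\varpi_8(\Gamma(4))$: indeed $\varpi_8(\Gamma(4))$ is elementary abelian of rank $2g^2+g$ by Proposition \ref{prop 2-group} (applied with $n=2$), $H$ normalizes it, and the conjugation action of $H$ on it factors through $\varpi_{8\to 2}(H) = \mathfrak{S}_d$ by Proposition \ref{prop 2-group}(b). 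By Proposition \ref{prop N}(d) the only $S_d$-invariant subspace of $\varpi_8(\Gamma(4))$ properly containing $N^{(4)}$ is the whole space, so the second (``in fact'') assertion reduces to the first: it suffices to show $N^{(4)} \subseteq H$.

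For the containment $N^{(4)} \subseteq H$, I would take the transposition $\sigma := (1,2) \in \mathfrak{S}_d$ and an element $u \in H$ with $\varpi_{8\to 2}(u) = \sigma$. Choosing $a \in V/8V$ lifting $v_{\{1,2\}} \in V/2V$, one has $\varpi_{8\to 2}(t_a) = \sigma$, so $u = \phi\, t_a$ for some $\phi \in \varpi_8(\Gamma(2))$; note $t_a^4 = [1,2] \in \varpi_8(\Gamma(4))$. Computing $\mathfrak{u} := u^4$: since $\varpi_8(\Gamma(2))$ has exponent $4$ and $t_a$ conjugates elements of $\varpi_8(\Gamma(2))$ according to $\sigma$ (here I would use Proposition \ref{prop 2-group}(b) for the $\varpi_8(\Gamma(4))$-part and keep track of the $\varpi_8(\Gamma(2))/\varpi_8(\Gamma(4))$-part, which I will need to control), one expands $u^4 = \phi\,({}^{\sigma}\!\phi)\,\phi\,({}^{\sigma}\!\phi)\, t_a^4$ and finds it lies in $H \cap \varpi_8(\Gamma(4))$ and equals $[1,2]$ plus a sum $\sum_{j \in I}([1,j] + [2,j])$ of elements of $\varpi_8(\Gamma(4))$, for some $I \subseteq \{3,\dots,2g+1\}$ determined by the coordinates of $\phi$. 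If $I = \varnothing$ then $[1,2] \in H$, whence by $S_d$-invariance all $[i,j] \in H$ and $H \supseteq \varpi_8(\Gamma(4)) \supseteq N^{(4)}$. If $I \neq \varnothing$, pick $k \in I$ and form $\mathfrak{u} + {}^{(2,k)}\!\mathfrak{u} + {}^{(1,2,k)}\!\mathfrak{u}$, which by the same computation as in Lemma \ref{lemma contains N} equals $\Delta_{1,2,k} = [1,2]+[1,k]+[2,k] \in N^{(4)}$ and lies in $H$; by $S_d$-invariance all $\Delta_{i,j,k} \in H$, and these generate $N^{(4)}$ by Proposition \ref{prop N}(b). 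This proves the strict containment $N^{(4)} \subset H$ (strict because $N^{(4)} \neq \varpi_8(\Gamma(4))$ and $H$, surjecting onto $\mathfrak{S}_d$, is not contained in $\varpi_8(\Gamma(4))$).

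The one point requiring care — and the part I expect to be the main obstacle — is the bookkeeping in the computation of $u^4$: unlike the level-$4$ case, $\phi$ now lives in $\varpi_8(\Gamma(2))$, which is \emph{not} abelian and has exponent $4$ rather than $2$, so I must verify that the ``$\varpi_8(\Gamma(2))$-above-$\varpi_8(\Gamma(4))$'' contributions to $u^4$ and to the combinations ${}^{(2,k)}\!\mathfrak{u}$, ${}^{(1,2,k)}\!\mathfrak{u}$ genuinely land in $\varpi_8(\Gamma(4))$ with the stated coefficients. This is handled by expanding $\phi$ as a product of a lift of an element of $M^{(2)} = \varpi_4(\Gamma(2))/N^{(2)}$ times elements of $\varpi_8(\Gamma(4))$ and using Proposition \ref{prop N}(f),(g) to compute the relevant commutators (all of which lie in $N^{(4)}$), together with the transvection identities already recorded in Lemma \ref{lemma lift identities}; the upshot is that modulo $N^{(4)}$ everything collapses exactly as in the level-$4$ argument, and the residual $N^{(4)}$-terms are harmless since we are trying to show $N^{(4)} \subseteq H$ anyway. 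Once this verification is in place the proof is complete.
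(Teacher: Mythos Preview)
Your reduction in the first paragraph is fine and matches the paper. The gap is in the second and third paragraphs, and it is exactly the sentence you flagged: ``the residual $N^{(4)}$-terms are harmless since we are trying to show $N^{(4)} \subseteq H$ anyway.'' That reasoning is circular. Concretely, in the $I \neq \varnothing$ case your computation gives $\mathfrak{u} + {}^{(2,k)}\mathfrak{u} + {}^{(1,2,k)}\mathfrak{u} = \Delta_{1,2,k} + n'$ for some uncontrolled $n' \in N^{(4)}$. Since $\Delta_{1,2,k}$ already lies in $N^{(4)}$, all you have produced is \emph{some} element of $N^{(4)}$ in $H$, and you have no reason to believe its $S_d$-orbit spans $N^{(4)}$. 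The level-$4$ argument works because $\varpi_4(\Gamma(2))$ is abelian, so there is no error term at all; here $\varpi_8(\Gamma(2))$ has class $2$ and the commutator corrections genuinely pollute the answer. Note also that there exist $S_d$-invariant subspaces of $\varpi_8(\Gamma(4))$ not containing $N^{(4)}$ (for instance $\varpi_8(\Gamma(4))_0$, since $\Delta_{1,2,3}$ has odd coefficient sum), so Proposition~\ref{prop N}(d) alone cannot rescue you. A further warning sign: your outline never invokes $g \geq 2$, whereas the paper remarks (immediately after this lemma) that the statement genuinely fails for $g=1$.

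The paper's proof takes a different two-step route. First, it uses Lemma~\ref{lemma contains N} to lift $\Delta_{1,2,3}, \Delta_{1,2,4} \in N^{(2)}$ to elements $\widetilde{\Delta}_{1,2,3}, \widetilde{\Delta}_{1,2,4} \in H \cap \varpi_8(\Gamma(2))$ and computes their \emph{commutator} via Proposition~\ref{prop N}(f); this lands cleanly in $H \cap \varpi_8(\Gamma(4))$ as $\Delta_{1,3,4}+\Delta_{2,3,4}$, and $S_d$-invariance then gives $N^{(4)}_0 \subset H$. Second (the hard part), it produces an element of $H \cap \varpi_8(\Gamma(4))$ lying \emph{outside} $\varpi_8(\Gamma(4))_0$: after reducing to the case $\varpi_{8\to 4}(H) = H_{\mathbf{0}}$, it takes lifts $u_{(1,j)}$ of the specific generators $\lfloor 1,2\rfloor\cdots\lfloor 1,2g+1\rfloor\,\varpi_8(\widetilde{(1,j)})$, uses $g\ge 2$ to choose $j,k$ with matching $s_j \equiv s_k \pmod{\varpi_8(\Gamma(4))_0}$, and then carries out a three-step calculation showing that the ``braid commutator'' $u_{(1,j)}u_{(1,k)}u_{(1,j)}u_{(1,k)}^{-1}u_{(1,j)}^{-1}u_{(1,k)}^{-1}$ equals $\sum_{i\neq j,k}\Delta_{i,j,k} \in N^{(4)}\smallsetminus N^{(4)}_0$. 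Proposition~\ref{prop N}(e) then forces $N^{(4)} \subset H$. The key idea you are missing is to first secure $N^{(4)}_0$ by commutators (where the error terms are controlled) and then separately exhibit an element outside $\varpi_8(\Gamma(4))_0$.
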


\begin{proof}

Exactly as in the proof of the analogous statement of Lemma \ref{lemma contains N}, we have that $H \cap \varpi_8(\Gamma(4))$ is an $S_d$-invariant subspace of $\varpi_8(\Gamma(4))$ (a fact which we will use freely below) and so the second statement follows from the first.

We know from Lemma \ref{lemma contains N} that $\varpi_{8 \to 4}(H) \cap \varpi_4(\Gamma(2)) \supseteq N^{(2)}$.  For ease of notation, for $1 \leq i < j \leq 2g + 1$ we write $\lfloor i, j \rfloor = \lfloor j, i \rfloor \in \varpi_8(\Gamma(2))$ for the image modulo $8$ of $t_{a_{i, j}}^2$.  Choose elements $\widetilde{\Delta}_{1, 2, 3}, \widetilde{\Delta}_{1, 2, 4} \in H \cap \varpi_8(\Gamma(2))$ whose respective images modulo $4$ are $\Delta_{1, 2, 3}, \Delta_{1, 2, 4} \in N^{(2)}$.  Then we can write 
\begin{equation}
\widetilde{\Delta}_{1, 2, 3} = s\lfloor 1, 2 \rfloor \lfloor 1, 3 \rfloor \lfloor 2, 3 \rfloor, \ \widetilde{\Delta}_{1, 2, 4} = s'\lfloor 1, 2 \rfloor \lfloor1, 4 \rfloor \lfloor 2, 4 \rfloor
\end{equation}
 for some $s, s' \in H \cap \varpi_8(\Gamma(4))$.  Now we use Proposition \ref{prop N}(e) to compute that the commutator $\widetilde{\Delta}_{1, 2, 3} \widetilde{\Delta}_{1, 2, 4} \widetilde{\Delta}_{1, 2, 3}^{-1} \widetilde{\Delta}_{1, 2, 4}^{-1} \in H$ is 
\begin{equation}
\Delta_{1, 2, 3} + \Delta_{1, 2, 3} + \Delta_{1, 2, 4} + \Delta_{1, 3, 4} + \Delta_{1, 2, 4} + \Delta_{2, 3, 4} = \Delta_{1, 3, 4} + \Delta_{2, 3, 4} \in \varpi_8(\Gamma(4)).
\end{equation}
  We therefore get that $H$ contains all elements of the form $\Delta_{i, j, k} + \Delta_{l, j, k} \in \varpi_8(\Gamma(4))$ for distinct $i, j, k, l$.  It is easy to see that the set of all such elements generates $N^{(4)}_0$, and so we have $H \supseteq N^{(4)}_0$.  By Proposition \ref{prop N}(c), it now suffices to show that $H$ contains some element of $\varpi_8(\Gamma(4)) \smallsetminus \varpi_8(\Gamma(4))_0$.

If we have $\varpi_{8 \to 4}(H) \supset \varpi_4(\Gamma(2))$, then by Lemma \ref{lemma sufficient}, we have $H \supset \varpi_8(\Gamma(4))$ and we are done.  We therefore assume otherwise, which implies by Lemma \ref{lemma contains N} that we have the equality $\varpi_{8 \to 4}(H \cap \varpi_8(\Gamma(2))) = N^{(2)}$.  Lemma \ref{lemma quasi-cocycle} and Theorem \ref{thm mod 4}(b) imply that $\varpi_{8 \to 4}(H)$ is conjugate in $\varpi_{4 \to 2}^{-1}(\mathfrak{S}_{2g + 1})$ to a subgroup corresponding to the quasi-cocycle $\varphi_{\mathbf{0}}$ where $\mathbf{0} \in \ff_2^{2g}$ is the zero vector.  We therefore assume, after possibly conjugating by a suitable element of $\varpi_{8 \to 2}^{-1}(\mathfrak{S}_{2g + 1})$, that $\varpi_{8 \to 4}(H) \subset \varpi_{4 \to 2}^{-1}(\mathfrak{S}_{2g + 1})$ is the subgroup corresponding to the quasi-cocycle $\varphi_{\mathbf{0}}$ via the constructions given in \S\ref{sec3}.  Since we have $\pi \circ \varpi_{8 \to 4}(\lfloor 1, 2 \rfloor ... \lfloor 1, 2g + 1 \rfloor) = v_{\{2, ... , 2g + 1\}} \in M^{(2)}$, the elements $\varpi_{8 \to 4}(\lfloor 1, 2 \rfloor ... \lfloor 1, 2g + 1 \rfloor) \varpi_4(\widetilde{(1, j)})$ lie in $\varpi_{8 \to 4}(H)$ for $2 \leq j \leq 2g + 1$.  Choose respective liftings $u_{(1, j)} \in H$ of these elements, so that we have $u_{(1, j)} = s_j \lfloor 1, 2 \rfloor ... \lfloor 1, 2g + 1 \rfloor \varpi_8(\widetilde{(1, j)})$ for each $j$, where the $s_j$'s are elements of $\varpi_8(\Gamma(4))$.  We claim that there exist $j, k \in \{2, ... , 2g + 1\}$ such that we have $u_{(1, j)}u_{(1, k)}u_{(1, j)}u_{(1, k)}^{-1}u_{(1, j)}^{-1}u_{(1, k)}^{-1} \in \varpi_8(\Gamma(4)) \smallsetminus \varpi_8(\Gamma(4))_0$, thus proving the lemma.

For $2 \leq j \leq 2g + 1$, let $u_{(1, j)}' = s_{j}^{-1} u_{(1, j)} = \lfloor 1, 2 \rfloor ... \lfloor 1, 2g + 1 \rfloor \varpi_8(\widetilde{(1, j)})$.  Using Proposition \ref{prop N}(e), we see that $u_{(1, j)}'u_{(1, k)}'u_{(1, j)}'u_{(1, k)}'^{-1}u_{(1, j)}'^{-1}u_{(1, k)}'^{-1} = s_j^{-3} s_k^{-3} u_{(1, j)}u_{(1, k)}u_{(1, j)}u_{(1, k)}^{-1}u_{(1, j)}^{-1}u_{(1, k)}^{-1}$.  Since $g \geq 2$, there is some choice of $j, k$ such that $s_j \equiv s_k$ (mod $\varpi_8(\Gamma(4))_0$), and in this case we have $u_{(1, j)}'u_{(1, k)}'u_{(1, j)}'u_{(1, k)}'^{-1}u_{(1, j)}'^{-1}u_{(1, k)}'^{-1} \equiv u_{(1, j)}u_{(1, k)}u_{(1, j)}u_{(1, k)}^{-1}u_{(1, j)}^{-1}u_{(1, k)}^{-1}$ (mod $\varpi_8(\Gamma(4))_0$).  It therefore suffices to show that $u_{(1, j)}'u_{(1, k)}'u_{(1, j)}'u_{(1, k)}'^{-1}u_{(1, j)}'^{-1}u_{(1, k)}'^{-1} \in \varpi_8(\Gamma(4)) \smallsetminus \varpi_8(\Gamma(4))_0$ for $2 \leq j < k \leq 2g + 1$.

By slightly abusing notation for the sake of brevity, below we use the superscript $^{\widetilde{(i, j)}}$ to indicate conjugation by $\varpi_8(\widetilde{(i, j)})$ for distinct $i, j$.  We also write $\mu$ for $\lfloor 1, 2 \rfloor ... \lfloor 1, 2g + 1 \rfloor \in \varpi_8(\Gamma(2))$.

Since $\varpi_8(\Gamma(4))_0$ is normal in $\varpi_{8 \to 2}^{-1}(\mathfrak{S}_{2g + 1})$, it suffices to show for any $j, k$ that we have 

\noindent $\mu^{-1}u_{(1, j)}'u_{(1, k)}'u_{(1, j)}'u_{(1, k)}'^{-1}u_{(1, j)}'^{-1}u_{(1, k)}'^{-1}\mu \in \varpi_8(\Gamma(4)) \smallsetminus \varpi_8(\Gamma(4))_0$.  Using the first relation given by Lemma \ref{lemma lift identities}, we compute 
\begin{equation} \label{eq long1}
\mu^{-1}u_{(1, j)}'u_{(1, k)}'u_{(1, j)}'u_{(1, k)}'^{-1}u_{(1, j)}'^{-1}u_{(1, k)}'^{-1}\mu = (^{\widetilde{(1, j)}}\!\mu) (^{\widetilde{(1, j)}\widetilde{(1, k)}}\!\mu) (^{\widetilde{(1, k)}\widetilde{(1, j)}}\!\mu)^{-1} (^{\widetilde{(1, k)}}\!\mu)^{-1}.
\end{equation}
  We now proceed to show that this element lies in $N^{(4)} \smallsetminus N^{(4)}_0 \subset \varpi_8(\Gamma(4)) \smallsetminus \varpi_8(\Gamma(4))_0$, freely using the fact that elements of $N^{(4)}$ commute with everything in $\varpi_8(\Gamma(2))$ by Proposition \ref{prop N}(e).

\textbf{Step 1:} We show that the expression in (\ref{eq long1}) is equivalent modulo $N^{(4)}_0$ to 
\begin{equation} \label{eq long2}
\mu_j (^{\widetilde{(1, j)}}\!\mu_k) (^{\widetilde{(1, k)}}\!\mu_j)^{-1} \mu_k^{-1},
\end{equation}
  where $\mu_i$ denotes $\lfloor i, 2 \rfloor ... \lfloor i, i - 1 \rfloor \lfloor i, 1 \rfloor \lfloor i, i + 1 \rfloor ... \lfloor i, 2g + 1 \rfloor$ for $2 \leq i \leq 2g + 1$.  We do this by showing that $\nu_i := (^{\widetilde{(1, i)}}\!\mu) \mu_i^{-1} \in N^{(4)}$ for any $i$; we will then be able to factor $\nu_j (^{\widetilde{(1, j)}}\!\nu_k) (^{\widetilde{(1, k)}}\!\nu_j)^{-1} \nu_k^{-1} \in N^{(4)}_0$ from the expression in (\ref{eq long1}) to get the expression in (\ref{eq long2}) multiplied by an element of $N^{(4)}_0$.  Fix a choice of $i \in \{2, ... , 2g + 1\}$.  We first observe that for $l \neq i$, we have 
\begin{equation}
^{\widetilde{(1, i)}}\!\lfloor 1, l \rfloor \lfloor i, l \rfloor^{-1} = \varpi_8(t_{a_{1, i}} t_{a_{1, l}}^2 t_{a_{1, i}}^{-1} t_{a_{i, l}}^{-2}) = \varpi_8(t_{t_{a_{1, i}}(a_{1, l})}^2 t_{a_{i, l}}^{-2}),
\end{equation}
 and that since $t_{a_{1, i}}(a_{1, l}) \equiv a_{i, l}$ (mod $2$), by Proposition \ref{prop N}(f) we have $^{\widetilde{(1, i)}}\!\lfloor 1, l \rfloor \lfloor i, l \rfloor^{-1} \in N^{(4)}$.  It follows that $^{\widetilde{(1, i)}}\!\mu \equiv \lfloor i, 2 \rfloor ... \lfloor i, i - 1 \rfloor \lfloor i, 1 \rfloor \lfloor i, i + 1 \rfloor ... \lfloor i, 2g + 1 \rfloor = \mu_i$ (mod $N^{(4)}$).

\textbf{Step 2:} We show that $^{\widetilde{(1, k)}}\!\mu_j \equiv \mu_j$ (mod $N^{(4)}_0$) for any distinct $j, k \in \{2, ... , 2g + 1\}$, which  will allow us to reduce the expression in (\ref{eq long2}) to $\mu_j \mu_k \mu_j^{-1} \mu_k^{-1}$.  Since the commutator of any two terms of the form $\lfloor j, l \rfloor$ lies in $N^{(4)}$ by Proposition \ref{prop N}(e), we can reorder the terms in the defining formula for $\mu_j$ and get $\nu_j' := \mu_j' \mu_j^{-1} \in N^{(4)}$, where $\mu_j' = \lfloor j, k \rfloor \lfloor j, 1 \rfloor \prod_{l \neq 1, j} \lfloor j, l \rfloor$ with the product taken from least to greatest.  Then we have $^{\widetilde{(1, k)}}\!\mu_j \mu_j^{-1} = ^{\widetilde{(1, k)}}\!\!\!\mu_j' \mu_j'^{-1} [(^{\widetilde{(1, k)}}\!\nu_j')\nu_j'^{-1}] \equiv ^{\widetilde{(1, k)}}\!\!\!\mu_j' \mu_j'^{-1}$ (mod $N^{(4)}_0$).

It is now straightforward to check for any $a, b \in V$ with $\langle a, b \rangle \equiv 0$ (mod $2$) that we have the relation $t_a^{-2} t_{t_a(b)}^2 \equiv t_{a + b}^4 t_a^4 t_b^4$ (mod $8$), by appropriately composing the formulas $v \mapsto v \pm \langle v, w \rangle w$ which defines the operators $t_w^{\pm 1}$ for $w = a, b, a + b$ and comparing the two sides.  Then for any choice of $l \neq 1, j, k$, if we put $b = a_{j, l}$ and $a = a_{1, k}$, this relation yields 
\begin{equation}
\lfloor j, l \rfloor^{-1} (^{\widetilde{(1, k)}}\!\lfloor j, l \rfloor) = \varpi_8(t_{a_{j, l} + a_{1, k}}^4 t_{a_{j, l}}^4 t_{a_{1, k}}^4),
\end{equation}
 which we identify with $T_{\{1, j, k, l\}} + T_{\{j, l\}} + T_{\{1, k\}} \in \mathfrak{sp}(V / 2V) \cong \varpi_8(\Gamma(4))$.  Using Lemma \ref{lemma compliment}, we see that this equals $[1, j] + [1, l] + [k, j] + [k, l] \in N^{(4)}_0$, so we have $^{\widetilde{(1, k)}}\!\mu_j' \mu_j'^{-1} \equiv \ ^{\widetilde{(1, k)}}\!(\lfloor j, 1 \rfloor \lfloor j, k \rfloor) (\lfloor j, k \rfloor \lfloor 1, j \rfloor)^{-1}$ (mod $N^{(4)}_0$).  Since $t_{a_{1, j}}(a_{1, k}) \equiv a_{j, k}$ (mod $2$), we have $s_{j, k} := t_{a_{1, j}} \lfloor 1, k \rfloor t_{a_{1, j}}^{-1} \lfloor j, k \rfloor^{-1} = t_{t_{a_{1, j}}(a_{1, k})}^2 t_{a_{j, k}}^{-2}$ lies in $N^{(4)}$ by Proposition \ref{prop N}(f).  We are therefore able to compute (using the first identity given by Lemma \ref{lemma lift identities})
$$^{\widetilde{(1, k)}}\!(\lfloor j, k \rfloor \lfloor 1, j \rfloor) = ^{\widetilde{(1, k)}}\!\!(s_{j, k}^{-1} t_{a_{1, j}} \lfloor 1, k \rfloor t_{a_{1, j}}^{-1} \lfloor 1, j \rfloor) = ^{\widetilde{(1, k)}}\!\!s_{j, k}^{-1} t_{a_{1, k}}t_{a_{1, j}}t_{a_{1, k}}^2 t_{a_{1, j}}t_{a_{1, k}}^{-1}$$
$$ = ^{\widetilde{(1, k)}}\!\!s_{j, k}^{-1} t_{a_{1, j}}t_{a_{1, k}}t_{a_{1, j}}t_{a_{1, k}}t_{a_{1, j}}t_{a_{1, k}}^{-1} = ^{\widetilde{(1, k)}}\!\!s_{j, k}^{-1} t_{a_{1, j}}t_{a_{1, k}}^2 t_{a_{1, j}} = ^{\widetilde{(1, k)}}\!\!s_{j, k}^{-1} t_{a_{1, j}} \lfloor 1, k \rfloor t_{a_{1, j}}^{-1} \lfloor 1, j \rfloor$$
\begin{equation}
 = ^{\widetilde{(1, k)}}\!\!s_{j, k}^{-1} s_{j, k} \lfloor j, k \rfloor \lfloor 1, j \rfloor \equiv \lfloor j, k \rfloor \lfloor 1, j \rfloor \ \ \ \ (\mathrm{mod} \ N^{(4)}_0).
\end{equation}
  We therefore get $^{\widetilde{(1, k)}}\!\mu_j \mu_j^{-1} \equiv ^{\widetilde{(1, k)}}\!\!(\lfloor j, 1 \rfloor \lfloor j, k \rfloor) (\lfloor j, k \rfloor \lfloor 1, j \rfloor)^{-1} \equiv 0$ (mod $N^{(4)}_0$), as desired.

\textbf{Step 3:} Finally, we show that the commutator $\mu_j \mu_k \mu_j^{-1} \mu_k^{-1}$ lies in $N^{(4)} \smallsetminus N^{(4)}_0$, which will conclude the proof.  Using the fact that the commutator subgroup of $\varpi_8(\Gamma(2))$ is contained in the center of $\varpi_8(\Gamma(2))$ as implied by Proposition \ref{prop N}(e), we have $\mu_j \mu_k \mu_j^{-1} \mu_k^{-1} = \prod_{l \neq k} (\mu_j \lfloor k, l \rfloor \mu_j^{-1} \lfloor k, l \rfloor^{-1})$.  We further deduce using Proposition \ref{prop N}(e) that for $l \neq j, k$ that $\mu_j \lfloor k, l \rfloor \mu_j^{-1} \lfloor k, l \rfloor^{-1}$ is $\Delta_{j, k, l} + \Delta_{j, k, l} = 0 \in \varpi_8(\Gamma(4))$, while $\mu_j \lfloor j, k \rfloor \mu_j^{-1} \lfloor j, k \rfloor^{-1}$ is computed to be $\sum_{i \neq j, k} \Delta_{i, j, k} \in N^{(4)} \smallsetminus N^{(4)}_0$, and we are done.

\end{proof}

\begin{rmk}

The step in the above proof where we choose $j$ and $k$ so that $s_j \equiv s_k$ (mod $\varpi_8(\Gamma(4))_0$) is the only step of the argument for which the assumption that $g \geq 2$ is necessary.  When $g = 1$, there indeed exist subgroups $H \subset \varpi_{8 \to 2}^{-1}(\mathfrak{S}_3)$ with $\varpi_{8 \to 2}(H) = \mathfrak{S}_3$ and $H \cap \varpi_8(\Gamma(4)) = N^{(4)}_0 = \{1\}$ (e.g. letting $H$ be generated by $\varpi_8(t_{a_{1, 2}}^{-2} t_{a_{1, 3}}^{-2} t_{a_{1, 2}})$ and $\varpi_8(t_{a_{2, 3}}^4 t_{a_{1, 2}}^2 t_{a_{1, 3}}^3)$, we have $H \cap \varpi_8(\Gamma(2)) = \{\pm 1\}$).  If we impose an additional assumption in the $g = 1$ case that $H \cap \varpi_8(\Gamma(4)) = N^{(4)}$, the arguments in the rest of \S\ref{sec4} still carry through and such subgroups can still be classified as in the statement of Theorem \ref{thm mod 8} below.

\end{rmk}

Now that we have determined the subgroup $H \cap \varpi_8(\Gamma(4)) \subset H$ in the case that $H$ does not contain $\varpi_8(\Gamma(4))$, we are able to determine the only possible subgroup $H \cap \varpi_8(\Gamma(2)) \subset H$ in this case.  We retain the notation of $\lfloor i, j \rfloor = \varpi_8(t_{a_{i, j}}^2)$ used in the proof of Lemma \ref{lemma contains N mod 8} and define $\widetilde{N} \subset \varpi_8(\Gamma(2))$ to be the subgroup generated by the elements
$$\delta_{i, j, k} := \lfloor i, j \rfloor \lfloor i, k \rfloor \lfloor j, k \rfloor \prod_{\stackrel{1 \leq l < m \leq 2g + 1}{\{l, m\} \cap \{i, j, k\} \neq \varnothing}} \lfloor l, m \rfloor^2$$
 (note that the elements $\lfloor l, m \rfloor^2$ in the above expression commute with everything in $\varpi_8(\Gamma(2))$ by Proposition \ref{prop N}(e), so in particular there is no need to specify any order).

We write $\widetilde{M}$ for the quotient $\varpi_8(\Gamma(2)) / \widetilde{N}$ and write $\tilde{\pi} : \varpi_8(\Gamma(2)) / \widetilde{N} \twoheadrightarrow \widetilde{M}$ for the corresponding quotient map.  It is clear from Proposition \ref{prop N}(e) that $\widetilde{M}$ is an abelian group and in fact a $\zz / 4\zz$-module generated by the order-$4$ elements $\overline{\lfloor i, j \rfloor} := \tilde{\pi}(\lfloor i, j \rfloor)$ for $1 \leq i < j \leq 2g + 1$ which satisfy certain relations coming from the definition of the elements $\delta_{i, j, k} \in \varpi_8(\Gamma(2))$.  We therefore use additive notation when expressing the elements of $\widetilde{M}$ in terms of the $\overline{\lfloor i, j \rfloor}$'s.

\begin{prop} \label{prop tilde N}

\

a) The subgroup $\widetilde{N} \subset \varpi_8(\Gamma(2))$ is the unique normal subgroup of $\varpi_{8 \to 2}^{-1}(\mathfrak{S}_{2g + 1})$ whose intersection with $\varpi_8(\Gamma(4))$ coincides with $N^{(4)}$ and whose image modulo $4$ coincides with $N^{(2)}$.

b) The conjugation action on the normal subgroup $\varpi_8(\Gamma(2)) \lhd \varpi_{8 \to 2}^{-1}(\mathfrak{S}_{2g + 1})$ induces an action of $S_d = \mathfrak{S}_{2g + 1} = \varpi_{8 \to 2}^{-1}(\mathfrak{S}_{2g + 1}) / \varpi_8(\Gamma(2))$ on $\widetilde{M} = \varpi_8(\Gamma(2)) / \widetilde{N}$ which is given as follows: any permutation $\sigma \in S_{2g + 1}$ sends each generator $\overline{\lfloor i, j \rfloor}$ to $\overline{\lfloor \sigma(i), \sigma(j) \rfloor}$.

\end{prop}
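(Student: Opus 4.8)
The plan is to establish first the basic structural facts about $\widetilde{N}$ and deduce part (b) from them, and then treat the uniqueness assertion in part (a).

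\emph{Basic properties and part (b).} I would begin by noting that $\delta_{i,j,k}^2 = \lfloor i, j \rfloor^2 \lfloor i, k \rfloor^2 \lfloor j, k \rfloor^2$, since the correction terms in the definition of $\delta_{i,j,k}$ contribute fourth powers $\lfloor l, m \rfloor^4 = \varpi_8(t_{a_{l,m}}^8) = 1$; by Lemma \ref{lemma compliment} this equals the generator $[i,j] + [i,k] + [j,k]$ of $N^{(4)}$ from Proposition \ref{prop N}(b), so (as such elements generate $N^{(4)}$) we get $N^{(4)} \subseteq \widetilde{N}$. Conjugating the defining expression for $\delta_{i,j,k}$ by a lift of $\sigma \in S_{2g+1}$ and applying Proposition \ref{prop 2-group}(b) gives $^{\sigma}\delta_{i,j,k} = \delta_{\sigma(i),\sigma(j),\sigma(k)}$, so $\widetilde{N}$ is $S_d$-invariant; and since $N^{(4)} = [\varpi_8(\Gamma(2)), \varpi_8(\Gamma(2))]$ by Proposition \ref{prop N}(f) (with $n = 2$) is contained in $\widetilde{N}$, the subgroup $\widetilde{N}$ is normal in $\varpi_8(\Gamma(2))$ and hence in $\varpi_{8 \to 2}^{-1}(\mathfrak{S}_{2g+1})$. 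This makes $\widetilde{M}$ a module under conjugation, on which $\varpi_8(\Gamma(2))$ (whose commutator subgroup $N^{(4)}$ lies in $\widetilde{N}$) acts trivially, so the action descends to $S_{2g+1}$. It then suffices to check the formula in (b) for transpositions: for $\sigma = (i,j)$ we have $\widetilde{(i,j)}\,\lfloor k, l \rfloor\,\widetilde{(i,j)}^{-1} = \varpi_8(t_{\widetilde{(i,j)}(a_{k,l})}^2)$, and since $\widetilde{(i,j)}(a_{k,l}) \equiv a_{\sigma(k),\sigma(l)} \pmod 2$, Proposition \ref{prop N}(g) (with $s = 1$, $n = 2$) shows that this differs from $\lfloor \sigma(k),\sigma(l) \rfloor$ by an element of $N^{(4)} \subseteq \widetilde{N}$. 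This proves (b).

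\emph{Part (a): $\widetilde{N}$ has the stated properties.} Reducing $\delta_{i,j,k}$ modulo $4$ kills the correction terms (which lie in $\varpi_8(\Gamma(4))$) and leaves $\Delta_{i,j,k}$, so $\varpi_{8 \to 4}(\widetilde{N})$ is spanned by the $\Delta_{i,j,k}$. The crucial identity to verify is $\delta_{i,j,k}\delta_{i,j,l}\delta_{i,k,l}\delta_{j,k,l} \in N^{(4)}$ for all distinct $i,j,k,l$ — the correction terms are built into $\delta_{i,j,k}$ precisely so that the commutator noise accumulated in forming this product cancels; this is a mechanical but lengthy calculation using the transvection identities of Lemmas \ref{lemma compliment} and \ref{lemma lift identities} together with the commutator formulas of Proposition \ref{prop N}(f),(g), and it is the step I expect to be the main obstacle. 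Granting it, and using $\delta_{i,j,k}^2 \in N^{(4)}$ from above, the quotient $\widetilde{N}/N^{(4)}$ is an $\ff_2$-space spanned by the images of the $\delta_{i,j,k}$ subject to the relations of Proposition \ref{prop N}(b), so $\dim_{\ff_2} \widetilde{N}/N^{(4)} \leq \dim N^{(2)} = 2g^2 - g$; since $\varpi_{8 \to 4}$ maps $\widetilde{N}/N^{(4)}$ onto $N^{(2)}$, this map must be an isomorphism, yielding simultaneously $\widetilde{N} \cap \varpi_8(\Gamma(4)) = N^{(4)}$ and $\varpi_{8 \to 4}(\widetilde{N}) = N^{(2)}$.

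\emph{Part (a): uniqueness.} Let $N'$ be another normal subgroup of $\varpi_{8 \to 2}^{-1}(\mathfrak{S}_{2g+1})$ with $N' \cap \varpi_8(\Gamma(4)) = N^{(4)}$ and $\varpi_{8 \to 4}(N') = N^{(2)}$. Then $N^{(4)} \subseteq N'$, so $N'/N^{(4)}$ is an $S_d$-submodule of the abelian group $\varpi_8(\Gamma(2))/N^{(4)}$ that $\varpi_{8 \to 4}$ carries isomorphically onto $N^{(2)}$; equivalently it is the image of an $S_d$-equivariant section of the surjection $\varpi_{8 \to 4}^{-1}(N^{(2)})/N^{(4)} \twoheadrightarrow N^{(2)}$, whose kernel is $\varpi_8(\Gamma(4))/N^{(4)} = M^{(4)} \cong V/2V$ (Proposition \ref{prop N}(c)). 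Since $\widetilde{N}/N^{(4)}$ provides one such section, the difference of the two sections lies in $\Hom_{S_{2g+1}}(N^{(2)}, V/2V)$, and it suffices to show this group is $0$. By Proposition \ref{prop N}(b) any homomorphism $\psi$ there is determined by $\psi(\Delta_{1,2,3})$, which must be a vector of $V/2V$ fixed by $\mathrm{Stab}_{S_{2g+1}}(\{1,2,3\}) \supseteq S_{\{1,2,3\}} \times S_{\{4,\ldots,2g+1\}}$; because $2g+1$ is odd, the only such fixed vectors are $0$ and $v_{\{4,\ldots,2g+1\}}$, and feeding the nonzero candidate through the relation $\Delta_{1,2,3} + \Delta_{1,2,4} + \Delta_{1,3,4} + \Delta_{2,3,4} = 0$ produces $v_{\{1,2,3,4\}} \neq 0$, a contradiction. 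Hence $\psi = 0$, the two sections agree, and $N' = \widetilde{N}$.
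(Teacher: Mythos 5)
Your overall architecture tracks the paper's proof closely, and both the normality argument for part (b) and the uniqueness argument for part (a) (abstracted as the vanishing of $\Hom_{S_{2g+1}}(N^{(2)}, V/2V)$, computed by pinning down $\psi(\Delta_{1,2,3})$ with the stabilizer of $\{1,2,3\}$ and then testing the four-term relation) are sound and match the paper's reasoning. However, there is a genuine error and a genuine gap in the inputs.

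\textbf{The squaring computation is wrong.} You claim $\delta_{i,j,k}^2 = \lfloor i,j\rfloor^2\lfloor i,k\rfloor^2\lfloor j,k\rfloor^2 = \Delta_{i,j,k}$, and use this to conclude $N^{(4)} \subseteq \widetilde{N}$. This neglects that $\varpi_8(\Gamma(2))$ is not abelian; the group has nilpotency class $2$ with central commutator subgroup $N^{(4)}$, so writing $a = \lfloor i,j\rfloor$, $b = \lfloor i,k\rfloor$, $c = \lfloor j,k\rfloor$, one has
\begin{equation*}
(abc)^2 = a^2 b^2 c^2 \cdot (b,a)(c,a)(c,b),
\end{equation*}
where $(x,y) := xyx^{-1}y^{-1}$. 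By Proposition \ref{prop N}(f) each of these three commutators equals $\Delta_{i,j,k}$, so their product is $\Delta_{i,j,k}^3 = \Delta_{i,j,k}$, and since $a^2 b^2 c^2 = \Delta_{i,j,k}$ as well, the total is $\Delta_{i,j,k}^2 = 1$. Hence $\delta_{i,j,k}^2 = 1$, not $\Delta_{i,j,k}$ (you can check this for $g = 1$: with a standard choice of $a_{i,j}$'s one gets $\lfloor 1,2\rfloor\lfloor 1,3\rfloor\lfloor 2,3\rfloor = -I$, which squares to $I$). So your derivation of $N^{(4)} \subseteq \widetilde{N}$ collapses. The paper gets this inclusion instead from $\delta_{i,k,j}\delta_{i,j,k}^{-1} = (\lfloor i,k\rfloor, \lfloor i,j\rfloor) = \Delta_{i,j,k}$, which you should substitute; this also salvages the dimension count, since what you actually need there is only $\delta_{i,j,k}^2 \in N^{(4)}$, which holds trivially.

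\textbf{The central identity is deferred, and the stated intuition for it is misleading.} You correctly identify $\delta_{i,j,k}\delta_{i,j,l}\delta_{i,k,l}\delta_{j,k,l} \in N^{(4)}$ as the crux and then decline to verify it. In the paper this is done by a direct (if tedious) computation landing on $\sum_{\{p,q\} \subset \{i,j,k,l\}}[p,q] + \sum_{p \in \{i,j,k,l\},\, q \notin \{i,j,k,l\}}[p,q]$, which one checks lies in $N^{(4)}$ via the criterion of Proposition \ref{prop N}(a). Your heuristic that the correction terms ``cancel the commutator noise'' is not accurate: all commutators already land in $N^{(4)}$ and need no cancelling; the correction terms are there to adjust the parity of the \emph{non-commutator} part $\prod_{\{p,q\}\subset\{i,j,k,l\}}\lfloor p,q\rfloor^2$, whose image $\sum_{\{p,q\}\subset\{i,j,k,l\}}[p,q]$ is not by itself in $N^{(4)}$ (each $p\in\{i,j,k,l\}$ has row sum $3 \equiv 1$). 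This same misunderstanding of the role of commutators in $\varpi_8(\Gamma(2))$ is what produced the error in the $\delta^2$ computation above, so it is worth repairing the mental model before filling in this step.

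With those two repairs, the dimension-count reformulation of the reverse inclusion $\widetilde{N}\cap\varpi_8(\Gamma(4)) \subseteq N^{(4)}$ is a clean, slightly more conceptual packaging of what the paper does via the relations argument, and the rest of your proof is fine.
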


\begin{proof}

We first show that we have $\widetilde{N} \lhd \varpi_{8 \to 2}^{-1}(\mathfrak{S}_{2g + 1})$.  It is clear from Proposition \ref{prop 2-group}(a) and the fact that $S_{2g + 1}$ is generated by transpositions that the group $\varpi_{8 \to 2}^{-1}(\mathfrak{S}_{2g + 1})$ is generated by the lifts $\varpi_8(\widetilde{(i, j)})$.  As in the proof of Lemma \ref{lemma contains N mod 8}, we check using Proposition \ref{prop N}(f) that for $1 \leq i < j \leq 2g + 1$, conjugation by the image modulo $8$ of the lift $\tilde{\sigma} = \widetilde{(i, j)}$ of $\sigma := (i, j)$ sends $\lfloor k, l \rfloor$ to $\lfloor \sigma(k), \sigma(l) \rfloor$ times an element of $N^{(4)} \subset \widetilde{N}$, which proves the normality statement.

We next observe that $\varpi_{8 \to 4}(\delta_{i, j, k}) = \Delta_{i, j, k} \in N^{(2)}$ for all $i, j, k$.  Since these elements generate $N^{(2)}$ by Proposition \ref{prop N}(a), we get that $\varpi_{8 \to 4}(\widetilde{N})$ coincides with $N^{(2)} \subset \varpi_4(\Gamma(2))$.  Moreover, we get the inclusion $N^{(4)} \subseteq \widetilde{N} \cap \varpi_8(\Gamma(4))$ from the easily verifiable fact that ${\delta}_{i, k, j}\delta_{i, j, k}^{-1}$ is equal to $\Delta_{i, j, k} \in \varpi_8(\Gamma(4))$ for any $i, j, k$, again using Proposition \ref{prop N}(a).  To get the reverse inclusion, we refer to the full set of relations among the generators $\Delta_{i, j, k}$ of $N^{(2)}$ given by Proposition \ref{prop N}(a); we need to show that $\delta_{i, j, k}\delta_{i, j, l}\delta_{i, k, l}\delta_{j, k, l} \in N^{(4)}$ for all distinct $i, j, k, l$.  It is straightforward to compute (again using the fact that commutators between elements of the form $\lfloor i, j \rfloor$ lie in $N^{(4)}$ by Proposition \ref{prop N}(e)) that $\delta_{i, j, k}\delta_{i, j, l}\delta_{i, k, l}\delta_{j, k, l}$ is the element 
$$[i, j] + [i, k] + [i, l] + [j, k] + [j, l] + [k, l] + \sum_{m \neq i, j, k, l} ([m, i] + [m, j] + [m, k] + [m, l]) \in \varpi_8(\Gamma(4)),$$
 which belongs to $N^{(4)}$ by definition.  Thus, the subgroup $\widetilde{N} \subset \varpi_8(\Gamma(2))$ satisfies $\widetilde{N} \cap \varpi_8(\Gamma(4)) = N^{(4)}$ and $\varpi_{8 \to 4}(\widetilde{N}) = N^{(2)}$.

To prove uniqueness, suppose that $\widetilde{N}' \subset \varpi_8(\Gamma(2))$ is another subgroup which is normal in $\varpi_{8 \to 2}^{-1}(\mathfrak{S}_{2g + 1})$, whose intersection with $\varpi_8(\Gamma(4))$ coincides with $N^{(4)}$, and whose image modulo $4$ coincides with $N^{(2)}$.  We compare the quotients $\widetilde{N} / N^{(4)}$ and $\widetilde{N}' / N^{(4)}$ as subgroups of $\varpi_8(\Gamma(2)) / N^{(4)}$ as follows.  For all distinct $i, j, k$, we choose lifts $\delta_{i, j, k}' \in \widetilde{N}'$ satisfying $\varpi_4(\delta_{i, j, k}') = \Delta_{i, j, k}$ (noting that this choice is unique modulo $N^{(4)}$), and we let $\epsilon_{i, j, k} \in \varpi_8(\Gamma(2)) / N^{(4)}$ be the image modulo $N^{(4)}$ of $\delta_{i, j, k}' \delta_{i, j, k}^{-1}$.  It follows from Proposition \ref{prop N}(e) that the group $\varpi_8(\Gamma(2)) / N^{(4)}$ is abelian.  From this it is easy to see that the elements $\epsilon_{i, j, k}$ lie in $\varpi_8(\Gamma(4)) / N^{(4)} = M^{(4)}$, and that for distinct $i, j, k, l$, these elements (identified with their images in $M^{(4)}$ and written additively) satisfy $^{\sigma}\!\epsilon_{i, j, k} = \epsilon_{\sigma(i), \sigma(j), \sigma(k)}$ and $\epsilon_{i, j, k} + \epsilon_{i, j, l} + \epsilon_{i, k, l} + \epsilon_{j, k, l} = 0$.  The former property implies that for a given $i,j, k$, the vector $\epsilon_{i, j, k}$ is either $0$ or $v_I \in M$, where $I$ is the even-cardinality set $\{1, ... , 2g + 1\} \smallsetminus \{i, j ,k\}$.  Suppose that $\epsilon_{i, j, k} = v_I$; then for any $l \neq i, j, k$, the former and latter properties above give 
\begin{equation}
0 = \epsilon_{i, j, k} + \epsilon_{i, j, l} + \epsilon_{i, k, l} + \epsilon_{j, k, l} = \epsilon_{i, j, k} + ^{(k, l)}\!\!\epsilon_{i, j, k} + ^{(j, l)}\!\!\epsilon_{i, j, k} + ^{(i, l)}\!\!\epsilon_{i, j, k} = v_{\{i, j, k, l\}},
\end{equation}
 and we have a contradiction.  Therefore, we have $\epsilon_{i, j, k} = 0$ and so $\delta_{i, j, k}' = \delta_{i, j, k}$ for all $i, j, k$, implying the desired equality $\widetilde{N}' = \widetilde{N}$.  Part (a) is proved.

Now the fact that $\widetilde{N}$ is normal in $\varpi_{8 \to 2}^{-1}(\mathfrak{S}_{2g + 1})$ implies that the conjugation action induces an action of $\varpi_{8 \to 2}^{-1}(\mathfrak{S}_{2g + 1})$ on the quotient $\widetilde{M}$.  Since the commutator subgroup of $\varpi_8(\Gamma(2))$ is contained in $\widetilde{N}$, conjugation by any element of $\varpi_8(\Gamma(2))$ fixes each element of $\varpi_8(\Gamma(2))$ modulo $\widetilde{N}$; it follows that the induced action of $\varpi_{8 \to 2}^{-1}(\mathfrak{S}_{2g + 1})$ on the quotient $\widetilde{M}$ factors through $\varpi_{8 \to 2}^{-1}(\mathfrak{S}_{2g + 1}) \twoheadrightarrow \varpi_{8 \to 2}^{-1}(\mathfrak{S}_{2g + 1})/\varpi_8(\Gamma(2)) = \mathfrak{S}_{2g + 1}$.  The formula for the action given in the statement of (b) follows from the observations that for any $u \in \Sp(V)$ with $\sigma := \varpi_2(u) \in \mathfrak{S}_{2g + 1}$ and for each $a_{i, j} \in V$ as given by Hypothesis \ref{hyp lifts}(b), we have $u t_{a_{i, j}} u^{-1} = t_{u(a_{i, j})}$; that the image modulo $2$ of $u(a_{i, j})$ is $v_{\{\sigma(i), \sigma(j)\}} \in V / 2V$; and that (as is evident from Proposition \ref{prop N}(e)) we have $\tilde{\pi} \circ \varpi_8(t_{b}) = \overline{\lfloor \sigma(i), \sigma(j) \rfloor}$ for any $b \in V$ whose image modulo $2$ is $v_{\{\sigma(i), \sigma(j)\}} \in V / 2V$.

\end{proof}

\begin{cor} \label{cor contains tilde N}

Suppose that $H \subset \varpi_{8 \to 2}^{-1}(\mathfrak{S}_d)$ is a subgroup satisfying $\varpi_{8 \to 2}(H) = \mathfrak{S}_d$.  We have the (strict) containment $\widetilde{N} \subset H$.  In fact, we have that $H \not\supset \varpi_8(\Gamma(4))$ implies $H \cap \varpi_8(\Gamma(2)) = \widetilde{N}$; in this case, we have $\#H = (2g + 1)! \cdot 2^{4g^2 - 2g}$.

\end{cor}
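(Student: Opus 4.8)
The plan is to reduce the corollary to the uniqueness statement in Proposition~\ref{prop tilde N}(a) together with Lemma~\ref{lemma contains N mod 8}, the one extra ingredient being that $K := H \cap \varpi_8(\Gamma(2))$ is automatically a normal subgroup of $\varpi_{8 \to 2}^{-1}(\mathfrak{S}_{2g+1})$. To see this, I would observe that $\varpi_{8 \to 2}^{-1}(\mathfrak{S}_{2g+1})$ is generated by $H$ together with $\varpi_8(\Gamma(2)) = \ker\varpi_{8 \to 2}$: the subgroup $H$ normalizes $K$ (it normalizes both itself and the normal subgroup $\varpi_8(\Gamma(2))$), while $\varpi_8(\Gamma(2))$ normalizes $K$ because its commutator subgroup is contained in $N^{(4)}$ by Proposition~\ref{prop N}(f) and $N^{(4)} \subseteq H \cap \varpi_8(\Gamma(2)) = K$ by Lemma~\ref{lemma contains N mod 8}. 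Hence $K \lhd \varpi_{8 \to 2}^{-1}(\mathfrak{S}_{2g+1})$.

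Suppose first that $H \not\supset \varpi_8(\Gamma(4))$. By Lemma~\ref{lemma contains N mod 8} we then have $K \cap \varpi_8(\Gamma(4)) = H \cap \varpi_8(\Gamma(4)) = N^{(4)}$, so in view of Proposition~\ref{prop tilde N}(a) it suffices to show $\varpi_{8 \to 4}(K) = N^{(2)}$. A direct check gives $\varpi_{8 \to 4}(K) = \varpi_{8 \to 4}(H) \cap \varpi_4(\Gamma(2))$. Applying Lemma~\ref{lemma contains N} to the subgroup $\varpi_{8 \to 4}(H) \subseteq \varpi_{4 \to 2}^{-1}(\mathfrak{S}_d)$, either $\varpi_{8 \to 4}(H) \supseteq \varpi_4(\Gamma(2))$, or else $\varpi_{8 \to 4}(K) = \varpi_{8 \to 4}(H) \cap \varpi_4(\Gamma(2)) = N^{(2)}$. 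The first alternative would give $H \supseteq \varpi_8(\Gamma(4))$ --- arguing as in the proof of Lemma~\ref{lemma sufficient}, the squares in $H$ of lifts of the generators $\varpi_{8 \to 4}(t_{a_{i,j}}^2)$ of $\varpi_4(\Gamma(2))$ are the generators $[i,j] = \varpi_8(t_{a_{i,j}}^4)$ of $\varpi_8(\Gamma(4))$ --- contrary to assumption. Therefore $\varpi_{8 \to 4}(K) = N^{(2)}$, and Proposition~\ref{prop tilde N}(a) forces $K = \widetilde{N}$.

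When instead $H \supseteq \varpi_8(\Gamma(4))$ I would argue directly that $\widetilde{N} \subseteq H$: each generator $\delta_{i,j,k}$ of $\widetilde{N}$ satisfies $\varpi_{8 \to 4}(\delta_{i,j,k}) = \Delta_{i,j,k} \in N^{(2)} \subseteq \varpi_{8 \to 4}(H)$ by Lemma~\ref{lemma contains N}, so choosing $h \in H$ with $\varpi_{8 \to 4}(h) = \Delta_{i,j,k}$ gives $\delta_{i,j,k}h^{-1} \in \varpi_8(\Gamma(4)) \subseteq H$ and hence $\delta_{i,j,k} \in H$. Thus $\widetilde{N} \subseteq H$ in every case, and the containment is strict because $\widetilde{N} \subseteq \varpi_8(\Gamma(2))$ while $\varpi_{8 \to 2}(H) = \mathfrak{S}_{2g+1} \neq 1$. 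Finally, when $H \not\supset \varpi_8(\Gamma(4))$ we have $H \cap \varpi_8(\Gamma(2)) = \widetilde{N}$, so
\[
\#H = \#\varpi_{8 \to 2}(H)\cdot\#\bigl(H \cap \varpi_8(\Gamma(2))\bigr) = (2g+1)!\cdot\#\widetilde{N},
\]
and $\#\widetilde{N} = \#\varpi_{8 \to 4}(\widetilde{N})\cdot\#\bigl(\widetilde{N}\cap\varpi_8(\Gamma(4))\bigr) = \#N^{(2)}\cdot\#N^{(4)} = 2^{2g^2-g}\cdot 2^{2g^2-g} = 2^{4g^2-2g}$ by Proposition~\ref{prop N}(c), which gives $\#H = (2g+1)!\cdot 2^{4g^2-2g}$.

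The substantial work --- Lemma~\ref{lemma contains N mod 8} and the uniqueness in Proposition~\ref{prop tilde N}(a) --- is already in place, so what remains is essentially an assembly step; the only points requiring care are the normality of $K$ and the exclusion of the alternative $\varpi_{8 \to 4}(H) \supseteq \varpi_4(\Gamma(2))$, and I expect the latter (via the squaring argument that forces $H$ to contain $\varpi_8(\Gamma(4))$) to be the subtlest point.
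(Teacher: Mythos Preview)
Your proof is correct and follows essentially the same route as the paper's: establish that $K = H \cap \varpi_8(\Gamma(2))$ is normal in $\varpi_{8 \to 2}^{-1}(\mathfrak{S}_{2g+1})$ (using Proposition~\ref{prop N}(f) and Lemma~\ref{lemma contains N mod 8}), then invoke Lemmas~\ref{lemma contains N} and~\ref{lemma contains N mod 8} together with the uniqueness in Proposition~\ref{prop tilde N}(a), and compute the order at the end. Your squaring argument to rule out $\varpi_{8 \to 4}(H) \supseteq \varpi_4(\Gamma(2))$ is exactly the content of Lemma~\ref{lemma sufficient}, which the paper cites directly; and your explicit treatment of the case $H \supseteq \varpi_8(\Gamma(4))$ (showing $\delta_{i,j,k} \in H$ via a lift of $\Delta_{i,j,k}$) is a welcome elaboration of a point the paper passes over with ``the second statement clearly implies the first.''
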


\begin{proof}

Proposition \ref{prop N}(e) implies that the commutator of any element of $\varpi_8(\Gamma(2))$ with any element of $H \cap \varpi_8(\Gamma(2))$ lies in $N^{(4)}$, which by Lemma \ref{lemma contains N mod 8} is contained in $H \cap \varpi_8(\Gamma(2))$, so we have that $H \cap \varpi_8(\Gamma(2))$ is a normal subgroup of $\varpi_8(\Gamma(2))$.  Since $H \cap \varpi_8(\Gamma(2))$ is clearly normal in $H$ and since $\varpi_{8 \to 2}^{-1}(\mathfrak{S}_{2g + 1})$ is generated by its subgroups $H$ and $\varpi_8(\Gamma(2))$, we have $H \cap \varpi_8(\Gamma(2)) \lhd \varpi_{8 \to 2}^{-1}(\mathfrak{S}_{2g + 1})$.

The second statement of the corollary clearly implies the first, so we now assume that $H$ does not contain $\varpi_8(\Gamma(4))$ and show that $H \cap \varpi_8(\Gamma(2)) = \widetilde{N}$.  Indeed, thanks to Lemma \ref{lemma sufficient}, our assumption implies that $\varpi_{8 \to 4}(H)$ does not contain $\varpi_4(\Gamma(2))$.  Then Lemmas \ref{lemma contains N} and \ref{lemma contains N mod 8} respectively show that $\varpi_{8 \to 4}(H) \cap \varpi_4(\Gamma(2)) = N^{(2)}$ and $H \cap \varpi_8(\Gamma(4)) = N^{(4)}$.  Thus, the uniqueness statement in Proposition \ref{prop tilde N}(a) implies that the subgroup $H \cap \varpi_8(\Gamma(2)) \lhd \varpi_{8 \to 2}^{-1}(\mathfrak{S}_{2g + 1})$ must coincide with $\widetilde{N}$.

The claimed cardinality of $H$ comes from the computation $\#H = \#\varpi_{8 \to 2}(H) \#(H \cap \varpi_8(\Gamma(2)) = 
\#S_{2g + 1} \#\widetilde{N} = (2g + 1)! \cdot \#N^{(2)} \#N^{(4)} = (2g + 1)! \cdot 2^{4g^2 - 2g}$.

\end{proof}

\begin{cor} \label{cor contains tilde N2}

Suppose that $H \subset \varpi_{8 \to 2}^{-1}(\mathfrak{S}_d)$ is a subgroup satisfying $\varpi_{8 \to 2}(H) = \mathfrak{S}_d$ and $H \cap \varpi_8(\Gamma(2)) = \widetilde{N}$.  For each $\sigma \in \mathfrak{S}_d$, let $u_{\sigma} \in H$ be an element such that $\varpi_{8 \to 2}(u_{\sigma}) = \sigma$.  Then the set of $u_{\sigma}$'s generates $H$.

\end{cor}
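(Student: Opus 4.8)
The plan is to mimic, essentially verbatim, the proof of Corollary~\ref{cor contains N}, with the role of $N$ (and of Lemma~\ref{lemma contains N}) played instead by $\widetilde{N}$ (and by Corollary~\ref{cor contains tilde N}). Let $H' \subseteq H$ denote the subgroup generated by the chosen set $\{u_\sigma\}_{\sigma \in \mathfrak{S}_d}$; the goal is to show $H' = H$.

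First I would note that $H'$ already surjects onto $\mathfrak{S}_d$ under $\varpi_{8 \to 2}$: since each generator $u_\sigma \in H'$ satisfies $\varpi_{8 \to 2}(u_\sigma) = \sigma$, the image $\varpi_{8 \to 2}(H')$ contains every $\sigma \in \mathfrak{S}_d$, hence equals $\mathfrak{S}_d$. Applying Corollary~\ref{cor contains tilde N} to $H'$ in place of $H$ then gives the containment $\widetilde{N} \subset H'$. Next, take an arbitrary $u \in H$ and set $\sigma := \varpi_{8 \to 2}(u) \in \mathfrak{S}_d$. Then $u u_\sigma^{-1}$ lies in the kernel of $\varpi_{8 \to 2}$ and in $H$, that is, in $H \cap \varpi_8(\Gamma(2)) = \widetilde{N}$ by hypothesis; by the previous step $\widetilde{N} \subset H'$, so $u u_\sigma^{-1} \in H'$, and since also $u_\sigma \in H'$ we conclude $u = (u u_\sigma^{-1}) u_\sigma \in H'$. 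As $u$ was arbitrary, $H = H'$, proving the statement.

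The argument is purely formal once Corollary~\ref{cor contains tilde N} is available, so there is no genuine obstacle here; the only subtlety worth flagging is that one must apply Corollary~\ref{cor contains tilde N} to the subgroup $H'$ (after observing that it surjects onto $\mathfrak{S}_d$) rather than to $H$ directly, since it is $\widetilde{N} \subset H'$ — not merely $\widetilde{N} \subset H$ — that is needed to conclude.
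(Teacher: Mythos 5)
Your proof is correct and is precisely the argument the paper has in mind: the paper's proof of Corollary~\ref{cor contains tilde N2} simply says it is the same as the proof of Corollary~\ref{cor contains N} with Corollary~\ref{cor contains tilde N} in place of Lemma~\ref{lemma contains N}, which is exactly what you spell out. Your flagged subtlety (applying Corollary~\ref{cor contains tilde N} to $H'$ rather than $H$) is the same point made implicitly in the proof of Corollary~\ref{cor contains N}.
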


\begin{proof}

This is exactly the same as the proof of Corollary \ref{cor contains N} (except that we use Corollary \ref{cor contains tilde N}).

\end{proof}

\subsection{Quasi-cocycles at level $8$} \label{sec4.2}

We now set out to classify all possibilities for the image of $G \subset \Sp(V)$ modulo $8$ assuming that $G$ does not contain $\Gamma(4) \lhd \Sp(V)$.  In this subsection and the next, we will present a series of definitions, lemmas, propositions which are analogous to the ones in \S\ref{sec3.2} and \S\ref{sec3.4} with analogous proofs; the arguments will therefore be presented briefly and with references to the corresponding arguments for the results in those earlier subsections.  Proposition \ref{prop mod 16} below does not rely on any of the results presented in this subsection and the next, and the reader interested in our argument for why $G$ must contain $\Gamma(8)$ may therefore skip the rest of \S\ref{sec3}.

We retain our choices of vectors $a_{i, j} \in V$ and lifts $\tilde{\sigma} \in \Sp(V)$ which were fixed in \S\ref{sec3.2}.  For each $\textbf{c} = (c_2, ... , c_{2g + 1}) \in \ff_2^{2g}$, we recall the construction given by Theorem \ref{thm mod 4}(b) which yields a quasi-cocycle $\varphi_{\mathbf{c}} : \mathfrak{S}_{2g + 1} \to M^{(2)}$ and a corresponding subgroup $H_{\mathbf{c}} \subsetneq \varpi_{4 \to 2}^{-1}(\mathfrak{S}_{2g + 1})$.  For any $\textbf{c} \in \ff_2^{2g}$ and any $i, j \in \{1, ... , 2g + 1\}$, we have seen in Remark \ref{rmk quasi-cocycle} that the compliment of the subset $I \subset \{1, ... , 2g + 1\}$ such that $\varpi_{\mathbf{c}}((i, j)) = v_I$ consists of exactly one natural number which is either $i$ or $j$; we denote this number by $m_{\mathbf{c}, i, j} \in \{i, j\}$.  We fix, once and for all, a set of lifts $y_{\mathbf{c}, \sigma} \in \Sp(V)$ for all $\sigma \in \mathfrak{S}_{2g + 1}$ which satisfies the following hypothesis.

\begin{hyp} \label{hyp lifts mod 8}

For each $\mathbf{c} \in \ff_2$, we assume the following.

a) We have $y_{\mathbf{c}, (1)} = 1$ and $\pi \circ \varpi_4(y_{\mathbf{c}, \sigma} \tilde{\sigma}^{-1}) = \varphi_{\mathbf{c}}(\sigma) \in M^{(2)}$ for $\sigma \in \mathfrak{S}_d$.

b) For each transposition $(i, j) \in \mathfrak{S}_{2g + 1}$, we have 
$$y_{\mathbf{c}, (1, j)} = \widetilde{(m_{\mathbf{c}, i, j}, 1)}^2 ... \ \widetilde{(m_{\mathbf{c}, i, j}, m_{\mathbf{c}, i, j} - 1)}^2 \ \widetilde{(m_{\mathbf{c}, i, j}, m_{\mathbf{c}, i, j} + 1)}^2 ... \ \widetilde{(m_{\mathbf{c}, i, j}, 2g + 1)}^2 \ \widetilde{(i, j)}.$$

\end{hyp}

It is clear from the construction given by Theorem \ref{thm mod 4}(b) for $\varphi_{\mathbf{c}}$ and our definition for the $m_{\mathbf{c}, i, j}$'s that part (b) of the above hypothesis is compatible with part (a) and that therefore a set of such lifts does exist (again, lifts of nontrivial non-transpositions may be chosen arbitrarily).  In order to obtain an analog of Lemma \ref{lemma lift identities}, we first need another lemma.

\begin{lemma} \label{lemma a + c}

Let $a, b, c \in V$ be vectors whose images modulo $2$ form a linearly independent subset of $V / 2V$ and which satisfy $\langle a, b \rangle = \langle b, c \rangle = -1$ and $\langle a, c \rangle = 0$.  We have the identity 
\begin{equation} \label{eq a + c}
t_a^2 t_{t_b(a)}^2 t_{t_c(t_b(a))}^2 t_b^2 t_{t_c(b)}^2 t_c^2 = t_a^2 t_{a - b}^2 t_{a - b + c}^2 t_b^2 t_{b - c}^2 t_c^2 = t_{a + c}^2.
\end{equation}

\end{lemma}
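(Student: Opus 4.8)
The plan is as follows. The first equality is a direct application of the transvection formula $t_x(y) = y + \langle y, x \rangle x$: since $\langle a, b \rangle = -1$ we get $t_b(a) = a - b$; since $\langle a, c \rangle = 0$ and $\langle b, c \rangle = -1$ we get $\langle a-b, c\rangle = 1$, so $t_c(t_b(a)) = (a-b)+c$; and $t_c(b) = b - c$. Thus that equality requires nothing beyond substituting the given values of the pairings.

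For the second equality, the key initial observation is that, the form being alternating, $\langle x, x\rangle = 0$ for every $x$, so $t_x^2 = (1 + T_x)^2 = 1 + 2T_x$ \emph{exactly}, where $T_x \colon v \mapsto \langle v, x\rangle x$ (the cross term $T_x^2$ vanishes since $T_x(x) = \langle x, x\rangle x = 0$). Hence every factor on the left of (\ref{eq a + c}), as well as the right-hand side, is of the shape $1 + 2(\text{operator})$, with the six left-hand axes $x_i$ running through $a,\ a-b,\ a-b+c,\ b,\ b-c,\ c$ — all lying in the rank-$\le 3$ summand $U := \zz_2 a + \zz_2 b + \zz_2 c$ of $V$ (it is a direct summand because $\{a,b,c\}$ is independent modulo $2$), as does $a+c$. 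Consequently, for every $v \in V$ the vector $\big(t_a^2 t_{a-b}^2 t_{a-b+c}^2 t_b^2 t_{b-c}^2 t_c^2\big)(v) - v$ lies in $2U$; moreover it depends on $v$ only through the triple $\big(\langle v, a\rangle, \langle v, b\rangle, \langle v, c\rangle\big) \in \zz_2^3$, since each transvection-square moves the current vector by $2\langle \cdot, x_i\rangle x_i$ with $x_i \in U$ and updates that triple $\zz_2$-linearly (all updates being governed by the prescribed values $\langle a, b\rangle = \langle b, c\rangle = -1$, $\langle a, c\rangle = 0$).

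The heart of the proof is therefore a bounded, explicit computation: I will apply the six transvection-squares to a general vector $v$ one at a time, at each stage recording the current vector as $v + \lambda a + \mu b + \nu c$ with $\lambda, \mu, \nu \in 2\zz_2$ written as $\zz_2$-linear functions of $\big(\langle v, a\rangle, \langle v, b\rangle, \langle v, c\rangle\big)$, and check that after all six steps one arrives at $v + 2\langle v, a\rangle(a+c) + 2\langle v, c\rangle(a+c) = v + 2\langle v, a+c\rangle(a+c) = t_{a+c}^2(v)$, using once more that $\langle a+c, a+c\rangle = 0$. As a guiding consistency check, reducing everything modulo $4$ the claim collapses to the $\ff_2$-linear identity $T_a + T_{a-b} + T_{a-b+c} + T_b + T_{b-c} + T_c = T_{a+c}$ in $\mathfrak{sp}(V/2V)$, which follows immediately from the bilinear expansion $T_{x+y} = T_x + T_y + E_{x,y} + E_{y,x}$, where $E_{x,y}\colon v \mapsto \langle v, y\rangle x$, together with the given pairings.

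The only genuine difficulty here is bookkeeping — carrying the factors of $2$ and the signs correctly through a composition of six transvections — rather than anything conceptual. The observation that every axis involved (including $a+c$) lies in the rank-$\le 3$ direct summand $U$, so that the entire calculation is controlled by the three scalars $\langle v, a\rangle, \langle v, b\rangle, \langle v, c\rangle$ and effectively takes place inside $\Sp$ of a space of dimension at most $4$, is what makes the verification routine and manifestly terminating. The hypothesis that $\{a,b,c\}$ is independent modulo $2$ is used only to ensure that $U$ is a direct summand and that those three scalars range over all of $\zz_2^3$, so that verifying the resulting linear formula is enough.
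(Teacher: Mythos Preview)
Your proposal is correct and follows essentially the same approach as the paper: both reduce the identity to a direct, bounded computation with transvection formulas, noting that all six axes lie in the rank-$3$ span of $a,b,c$ so the action on a general $v$ is controlled by finitely many scalars. The paper organizes the verification by picking an explicit decomposition $V = W \oplus \zz_2 b \oplus \zz_2 d$ with $W = \{v : \langle v,a\rangle = \langle v,c\rangle = 0\}$ and $\langle d,a\rangle = 1$, $\langle d,c\rangle = 0$, then checks the operator on $W$, on $b$, and on $d$; your parametrization by the triple $(\langle v,a\rangle,\langle v,b\rangle,\langle v,c\rangle)$ is the dual bookkeeping device and amounts to the same calculation. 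Your mod-$4$ consistency check via the bilinear expansion of $T_{x+y}$ is a nice addition not present in the paper.
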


\begin{proof}

This can be checked through a straightforward but tedious computation which we outline as follows. We first note that the submodule $W \subset V$ consisting of vectors $v$ satisfying $\langle v, a \rangle = \langle v, c \rangle = 0$ is of rank $2g - 2$ as it is the orthogonal compliment of the subspace generated by $a$ and $c$ (which by linear independence modulo $2$ and Nakayama's Lemma has rank $2$).  Then it is clear that there is a vector $d \in V$ satisfying $\langle d, a \rangle = 1$ and $\langle d, c \rangle = 0$ and that $V$ is generated over its submodule $W$ by the elements $b, d$.  Thus it suffices to verify that the operator on the left-hand side of (\ref{eq a + c}) acts as the identity on $W$, fixes $b$, and sends $d$ to $d + a + c$; it is straightforward to check this by appropriately applying the formulas defining the transvections appearing on the left-hand side to an arbitrary $w \in W$ and the vectors $b$ and $d$.

\end{proof}

\begin{rmk} \label{rmk topological}

The above lemma has a topological interpretation which the author used to first arrive at the formula in (\ref{eq a + c}).  The rough idea is to imagine a compact genus-$g$ Riemann surface $\Sigma$ with a degree-$2$ covering map to the Riemann sphere $S$, ramified at the points $1, ... , 2g + 1, \infty \in S$, and to identify $V$ with $H_1(\Sigma, \zz) \otimes \zz_2$ in such a way that the vectors $a, b, c \in V$ correspond to classes of simple loops on $\Sigma$ whose images on $S$ wrap counterclockwise around the subsets $\{1, 2\}, \{2, 3\}, \{3, 4\}$ of the ramification points respectively.  There is an action of the planar braid group $B_{2g + 1}$ on $H_1(\Sigma, \zz)$ given by the \textit{reduced integral Burau representation} as described in \cite[\S2.1]{brendle2018level} (which is also induced by the action $\rho$ on the fundamental group of $S \smallsetminus \{1, ..., 2g + 1, \infty\}$ given in \cite[\S2.1]{hasson2020prime}).  This action is such that for any even-cardinality subinterval $\{2r + 1, ... , 2s\} \subset \{1, ... , 2g + 1\}$, the (pure) braid $\alpha_I \in B_{2g + 1}$ given by rotating the points in $I$ counterclockwise in a full circle acts on $H_1(\Sigma, \zz)$ as $t_{a_I}^2$ where $a_I \in H_1(\Sigma, \zz)$ is represented by a simple loop whose image on $S$ wraps counterclockwise around the subset $I$ of ramification points.  Meanwhile, the standard generators $\beta_i$ of $B_{2g + 1}$ for $1 \leq i \leq 2g$ (where $\beta_i$ rotates the points $i$ and $i + 1$ in a counterclockwise semicircular motion) each act as $t_{a_{\{i, i + 1\}}}$.  Then the desired identity follows from the visual verification that $\alpha_{\{1, 2, 3, 4\}}$ equals the composition $\alpha_{\{1, 2\}} (\beta_2\alpha_{\{1, 3\}}\beta_2^{-1}) (\beta_3\beta_2\alpha_{\{1, 4\}} \beta_2^{-1}\beta_3^{-1}) \alpha_{\{2, 3\}} (\beta_3\alpha_{\{2, 4\}}\beta_3^{-1}) \alpha_{\{3, 4\}}$.  There are also topological interpretations of a similar flavor for Lemmas \ref{lemma compliment} and \ref{lemma -1}.

\end{rmk}

\begin{lemma} \label{lemma lift identities mod 8}

With respect to any $\mathbf{c} \in \ff_2^{2g}$, the lifts of transpositions fixed above satisfy the relations 
\begin{equation} \label{eq i j k mod 8}
\tilde{\pi} \circ \varpi_8(y_{\mathbf{c}, (i, j)}y_{\mathbf{c}, (i, k)}y_{\mathbf{c}, (i, j)}) = \tilde{\pi} \circ \varpi_8(y_{\mathbf{c}, (i, k)}y_{\mathbf{c}, (i, j)}y_{\mathbf{c}, (i, k)})
\end{equation}
 and 
\begin{equation} \label{eq i j k l mod 8}
\tilde{\pi} \circ \varpi_8(y_{\mathbf{c}, (i, j)}y_{\mathbf{c}, (k, l)}) = \tilde{\pi} \circ \varpi_8(y_{\mathbf{c}, (k, l)}y_{\mathbf{c}, (i, j)})
\end{equation}
 for distinct $i, j, k, l$.

\end{lemma}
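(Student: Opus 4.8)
The plan is to mirror the proof of Lemma \ref{lemma lift identities}, reducing each relation to a statement about the transvection-lifts $\widetilde{(i,j)}=t_{a_{i,j}}$ modulo $\widetilde N$. Since both sides of (\ref{eq i j k mod 8}) reduce modulo $2$ to the transposition $(j,k)$, and both sides of (\ref{eq i j k l mod 8}) reduce to $(i,j)(k,l)=(k,l)(i,j)$, while $\widetilde N$ is normal in $\varpi_{8\to 2}^{-1}(\mathfrak{S}_{2g+1})$ by Proposition \ref{prop tilde N}(a), both relations are meaningful as equalities in $\varpi_{8\to 2}^{-1}(\mathfrak{S}_{2g+1})/\widetilde N$, and this is the sense in which I read the displayed identities. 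Two facts power the reduction: first, $\varpi_8(\Gamma(2))/\widetilde N$ is abelian, being a quotient of $\varpi_8(\Gamma(2))/N^{(4)}$, which is abelian by Proposition \ref{prop N}(f); second, by Hypothesis \ref{hyp lifts mod 8}(b) one has $\varpi_8(y_{\mathbf{c},(i,j)})=\varpi_8(\mu_{m_{\mathbf{c},i,j}})\,\varpi_8(\widetilde{(i,j)})$, where $\mu_p:=\prod_{l\neq p}t_{a_{p,l}}^2\in\varpi_8(\Gamma(2))$, and conjugation of $\mu_p$ by $\varpi_8(\widetilde{\sigma})$ for a transposition $\sigma$ yields $\mu_{\sigma(p)}$ modulo $\widetilde N$ by Proposition \ref{prop tilde N}(b) together with normality of $\widetilde N$.

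For (\ref{eq i j k mod 8}) I would set $m=m_{\mathbf{c},i,j}$, $m'=m_{\mathbf{c},i,k}$ and push all the $\mu$-factors to the left: modulo $\widetilde N$ the left-hand side becomes $\mu_m\,\mu_{(i,j)m'}\,\mu_{(i,j)(i,k)m}\cdot\varpi_8(\widetilde{(i,j)}\widetilde{(i,k)}\widetilde{(i,j)})$ and the right-hand side becomes $\mu_{m'}\,\mu_{(i,k)m}\,\mu_{(i,k)(i,j)m'}\cdot\varpi_8(\widetilde{(i,k)}\widetilde{(i,j)}\widetilde{(i,k)})$ (where a product of transpositions acts on a point as the corresponding permutation). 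The two permutation factors coincide by (\ref{eq i j k}), and since $m\in\{i,j\}$ and $m'\in\{i,k\}$, a check of the four possibilities shows that $\{m,\,(i,j)m',\,(i,j)(i,k)m\}$ and $\{m',\,(i,k)m,\,(i,k)(i,j)m'\}$ agree as multisets, so the two $\mu$-products are congruent modulo $\widetilde N$ by commutativity of $\varpi_8(\Gamma(2))/\widetilde N$. This gives (\ref{eq i j k mod 8}).

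For (\ref{eq i j k l mod 8}) the analogous manipulation --- now using that $(i,j)$ fixes $m_{\mathbf{c},k,l}\in\{k,l\}$, that $(k,l)$ fixes $m_{\mathbf{c},i,j}\in\{i,j\}$, and again commutativity of $\varpi_8(\Gamma(2))/\widetilde N$ --- reduces the relation to showing that $c:=\varpi_8(\widetilde{(i,j)}\,\widetilde{(k,l)}\,\widetilde{(i,j)}^{-1}\,\widetilde{(k,l)}^{-1})$ lies in $\widetilde N$. As in the proof of Lemma \ref{lemma lift identities} I would compute $c$ from the transvection formulas using $t_{a_{i,j}}t_{a_{k,l}}t_{a_{i,j}}^{-1}=t_{t_{a_{i,j}}(a_{k,l})}$; writing $\langle a_{i,j},a_{k,l}\rangle=2e$ this gives $c=1-2e(T_{a_{i,j}+a_{k,l}}-T_{a_{i,j}}-T_{a_{k,l}})+4e^2(T_{a_{i,j}}-T_{a_{k,l}})$ in $\Sp(V/8V)$. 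Since each relevant composition of transvections ($T_{a_{i,j}}T_{a_{k,l}}$, $T_{a_{i,j}+a_{k,l}}T_{a_{i,j}}$, and so on) carries a factor $\langle a_{i,j},a_{k,l}\rangle=2e$, the class of $c$ in $\varpi_8(\Gamma(2))/N^{(4)}$ depends only on the parity of $e$. If $e$ is even, then $c\equiv 1+4e'(T_{a_{i,j}+a_{k,l}}-T_{a_{i,j}}-T_{a_{k,l}})\pmod 8$ with $e=2e'$, and this lies in $N^{(4)}$ because $T_{a_{i,j}+a_{k,l}}-T_{a_{i,j}}-T_{a_{k,l}}\equiv\Delta_{i,j,k}+\Delta_{i,j,l}\pmod 2$ by Lemma \ref{lemma compliment}. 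If $e$ is odd, a short computation (the modulo-$8$ sharpening of the modulo-$4$ identity in the proof of Lemma \ref{lemma lift identities}) gives $c\equiv\varpi_8(t_{a_{i,j}+a_{k,l}}^2)\,\lfloor i,j\rfloor\,\lfloor k,l\rfloor\pmod{N^{(4)}}$; then Lemma \ref{lemma a + c}, applied to suitably chosen lifts of $v_{\{i,j\}}$, $v_{\{j,k\}}$, $v_{\{k,l\}}$, together with Proposition \ref{prop N}(g), yields $\varpi_8(t_{a_{i,j}+a_{k,l}}^2)\equiv\prod_{\{p,q\}\subseteq\{i,j,k,l\}}\lfloor p,q\rfloor\pmod{N^{(4)}}$, and comparison with the explicit expansion of $\delta_{i,j,k}\delta_{i,j,l}$ modulo $N^{(4)}$ then shows $c\equiv\delta_{i,j,k}\delta_{i,j,l}\cdot\prod_{m\notin\{i,j,k,l\}}\Delta_{k,l,m}\pmod{N^{(4)}}$, which lies in $\widetilde N$.

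The main obstacle is this last step for (\ref{eq i j k l mod 8}): the transvection computation of $c$ and, especially, the identification of $\varpi_8(t_{a_{i,j}+a_{k,l}}^2)$ modulo $N^{(4)}$ via Lemma \ref{lemma a + c}, followed by the bookkeeping in the ``modulo-$8$ layer'' $M^{(4)}$. These computations are mechanical but lengthy; crucially they require no new ideas beyond those in the proofs of Lemma \ref{lemma lift identities}, Lemma \ref{lemma compliment}, and Proposition \ref{prop tilde N}, which is what licenses the terse write-up promised at the start of \S\ref{sec4.2}.
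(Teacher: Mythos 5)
Your proposal is correct and follows essentially the same route as the paper: reduce modulo $\widetilde N$ using the abelianness of $\varpi_8(\Gamma(2))/\widetilde N$ and the $\mathfrak{S}_{2g+1}$-action from Proposition \ref{prop tilde N}(b), invoke the level-$4$ identities of Lemma \ref{lemma lift identities}, compute the transvection commutator $c$ by the formula $t_a t_b t_a^{-1}=t_{t_a(b)}$, split by the parity of $e$ (equivalently by $\langle a_{i,j},a_{k,l}\rangle$ modulo $4$), and in the odd case expand $t_{a_{i,j}+a_{k,l}}^{2}$ via Lemma \ref{lemma a + c} together with Proposition \ref{prop N}(g). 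The only cosmetic difference is that you handle (\ref{eq i j k mod 8}) by a single multiset argument where the paper does one representative case and says the others are similar, and in the odd-$e$ case your identification of $c$ modulo $N^{(4)}$ drops a $\pm$ sign (the commutator can equal the inverse of the stated product), but since the goal is only $\tilde\pi(c)=0$ the sign is immaterial.
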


\begin{proof}

We start by noting that the first identity in Lemma \ref{lemma lift identities} implies that $\tilde{\pi} \circ \varpi_8(\widetilde{(i, j)}\widetilde{(i, k)}\widetilde{(i, j)}) = \tilde{\pi} \circ \varpi_8(\widetilde{(i, k)}\widetilde{(i, j)}\widetilde{(i, k)})$.  Meanwhile, let us first assume that $m_{\mathbf{c}, i, j} = m_{\mathbf{c}, i, k} = i$.  Then we have 
$$\tilde{\pi} \circ \varpi_8(y_{\mathbf{c}, (i, j)}y_{\mathbf{c}, (i, k)}y_{\mathbf{c}, (i, j)}\widetilde{(i, j)}^{-1}\widetilde{(i, k)}^{-1}\widetilde{(i, j)}^{-1}) = \sum_{m \neq i} \lfloor i, m \rfloor + ^{(i, j)}\!\!\sum_{m \neq i} \lfloor i, m \rfloor + ^{(i, j)(i, k)}\!\!\sum_{m \neq i} \lfloor i, m \rfloor$$
\begin{equation}
 = \sum_{l = i, j, k; \ m \neq i, j, k} \lfloor l, m \rfloor.
\end{equation}
  Since this expression is invariant under transposition of $j$ and $k$, we get the desired equality 

\noindent $\tilde{\pi} \circ \varpi_8(y_{\mathbf{c}, (i, j)}y_{\mathbf{c}, (i, k)}y_{\mathbf{c}, (i, j)}) = \tilde{\pi} \circ \varpi_8(y_{\mathbf{c}, (i, k)}y_{\mathbf{c}, (i, j)}y_{\mathbf{c}, (i, k)})$.  In the other cases (where $(m_{\mathbf{c}, i, j}, m_{\mathbf{c}, i, k})$ is $(i, k)$, $(j, i)$, or $(j, k)$), the first identity in the statement results from a similarly straightforward calculation.

To prove the second identity, we first note that the element $\tilde{\pi} \circ \varpi_8(y_{\mathbf{c}, (i, j)}\widetilde{(i, j)}^{-1})$ is fixed by the element $(k, l) \in \mathfrak{S}_{2g + 1}$ under the $\mathfrak{S}_{2g + 1}$-action on $\widetilde{M}$, and therefore conjugating $\tilde{\pi} \circ \varpi_8(y_{\mathbf{c}, (i, j)})$ by $\tilde{\pi} \circ \varpi_8(y_{\mathbf{c}, (k, l)})$ yields $\tilde{\pi} \circ \varpi_8(y_{\mathbf{c}, (i, j)} \widetilde{(i, j)}^{-1}\widetilde{(k, l)}\widetilde{(i, j)}\widetilde{(k, l)}^{-1})$.  It therefore suffices to show that $\tilde{\pi} \circ \varpi_8(\widetilde{(i, j)})$ and $\tilde{\pi} \circ \varpi_8(\widetilde{(k, l)})$ commute.  Clearly we have $\langle a_{i, j}, a_{k, l} \rangle \in 2\zz_2$.  One computes directly that $t_{a_{i, j}}t_{a_{k, l}}t_{a_{i, j}}^{-1}t_{a_{k, l}}^{-1}$ is equivalent modulo $8$ to $t_{a_{i, j} + a_{k, l}}^4 t_{a_{i, j}}^{4} t_{a_{k, l}}^{4}$ (resp. $t_{a_{i, j} + a_{k, l}}^{\pm 2} t_{a_{i, j}}^{\pm 2} t_{a_{k, l}}^{\pm 2}$) if $\langle a_{i, j}, a_{k, l} \rangle \equiv 0$ (mod $4$) (resp. if $\langle a_{i, j}, a_{k, l} \rangle \equiv \pm 2$ (mod $8$)), by appropriately composing the formulas defining the transvections involved and comparing the two sides.  In the first case, since $\varpi_8(t_{a_{i, j} + a_{k, l}}^4 t_{a_{i, j}}^{4} t_{a_{k, l}}^{4}) = 2(\lfloor i, k \rfloor + \lfloor i, l \rfloor + \lfloor j, k \rfloor + \lfloor j, l \rfloor) = \Delta_{i, j, k} + \Delta_{j, k, l} \in N^{(4)} \subset \widetilde{N}$, we get that $\tilde{\pi} \circ \varpi_8(t_{a_{i, j}}t_{a_{k, l}}t_{a_{i, j}}^{-1}t_{a_{k, l}}^{-1}) = 0$ and we are done.  We therefore assume that $\langle a_{i, j}, a_{k, l} \rangle \equiv 2$ (mod $4$).  We will use Lemma \ref{lemma a + c} to expand the image of the term $t_{a_{i, j} + a_{k, l}}^4$.  Set $a = a_{i, j}$ and set $c$ to be $a_{k, l}$ minus $\langle a_{i, j}, a_{k, l} \rangle$ times some vector $d \in V$ with $\langle a, d \rangle = 1$, so that $\langle a, c \rangle = 0$.  Since we have $\langle a, a_{j, k} \rangle, \langle a_{j, k}, c \rangle \in \zz_2^{\times}$, after multiplying $a_{j, k}$ by a suitable scalar and then adding a suitable multiple of $a_{k, l}$ to it, we get a vector $b \in V$ with $\langle a, b \rangle = \langle b, c \rangle = -1$.  Using Proposition \ref{prop N}(f) and the fact that $a \equiv a_{i, j}$, $b \equiv a_{j, k}$, and $c \equiv a_{k, l}$ (mod $2$), we see that $\tilde{\pi} \circ \varpi_8(t_{a_{i, j} + a_{k, l}}^{\pm 2} t_{a_{i, j}}^{\pm 2} t_{a_{k, l}}^{\pm 2}) = \tilde{\pi} \circ \varpi_8(t_{a + c}^{\pm 2} t_a^{\pm 2} t_c^{\pm 2})$.  Now we apply Lemma \ref{lemma a + c} to get 
$$\tilde{\pi} \circ \varpi_8(t_{a + c}^{\pm 2} t_a^{\pm 2} t_c^{\pm 2}) = \pm(\overline{\lfloor i, j \rfloor} + \overline{\lfloor i, k \rfloor} + \overline{\lfloor i, l \rfloor} + \overline{\lfloor j, k \rfloor} + \overline{\lfloor j, l \rfloor} + \overline{\lfloor k, l \rfloor} + \overline{\lfloor i, j \rfloor} + \overline{\lfloor k, l \rfloor})$$
\begin{equation}
= \pm \tilde{\pi}(\delta_{i, j, k} \delta_{i, j, l}) \pm \tilde{\pi}(\sum_{m \neq i, j, k, l} \Delta_{k, l, m}) = 0.
\end{equation}
  We have thus shown that the commutator of $\tilde{\pi} \circ \varpi_8(\widetilde{(i, j)})$ and $\tilde{\pi} \circ \varpi_8(\widetilde{(k, l)})$ is trivial, and the second identity follows.

\end{proof}

We now introduce a modulo-$8$ analog of a quasi-cocycle.

\begin{dfn} \label{dfn quasi-cocycle mod 8}

A \textit{level-$8$ quasi-cocycle} (of type $\mathbf{c}$) is a map $\varphi : \mathfrak{S}_{2g + 1} \to M^{(4)}$ satisfying the condition that 
$$\varphi(\sigma \tau) = \varphi(\sigma) + ^{\sigma}\!\!\varphi(\tau) + \tilde{\pi} \circ \varpi_8(y_{\mathbf{c}, \sigma}y_{\mathbf{c}, \tau} y_{\mathbf{c}, \sigma\tau}^{-1})$$
 for some $\mathbf{c} \in \ff_2^{2g}$.

\end{dfn}

We see that level-$8$ quasi-cocycles correspond to the subgroups $H \subset \varpi_{8 \to 2}^{-1}(\mathfrak{S}_{2g + 1})$ we are looking for, analogously to the situation modulo $4$.

\begin{lemma} \label{lemma quasi-cocycle mod 8}

Suppose that $H \subset \varpi_{8 \to 2}^{-1}(\mathfrak{S}_{2g + 1})$ is a subgroup generated by a set $\{u_{\sigma}\}_{\sigma \in \mathfrak{S}_{2g + 1}}$ with $\varpi_{8 \to 2}(u_{\sigma}) = \sigma$.  Assume that we have $\varpi_{8 \to 4}(H) \cap \varpi_4(\Gamma(2)) = N^{(2)}$, so that the subgroup $\varpi_{8 \to 4}(H) \subset \varpi_{4 \to 2}^{-1}(\mathfrak{S}_{2g + 1})$ corresponds via Theorem \ref{thm mod 4}(b) to the quasi-cocycle $\varphi_{\mathbf{c}} : \mathfrak{S}_{2g + 1} \to M^{(2)}$ for some $\mathbf{c} \in \ff_2^{2g}$.  Let $\varphi : \mathfrak{S}_{2g + 1} \to M^{(4)}$ be the map given by $\sigma \mapsto \tilde{\pi}(u_{\sigma} \varpi_8(y_{\mathbf{c}, \sigma})^{-1})$.  Then we have $H \cap \varpi_8(\Gamma(2)) = \widetilde{N}$ if and only if $\varphi$ is a level-$8$ quasi-cocycle of type $\mathbf{c}$.  In fact, there is a one-to-one correspondence between level-$8$ quasi-cocycles of type $\mathbf{c}$ and such subgroups $H \subset \varpi_{4 \to 2}^{-1}(\mathfrak{S}_d)$: this correspondence is given by sending a level-$8$ quasi-cocycle $\varphi : \mathfrak{S}_{2g + 1} \to M^{(4)}$ to the subgroup generated by the elements $\phi_{\sigma} \varpi_8(y_{\mathbf{c}, \sigma})$, where $\phi_{\sigma} \in \varpi_8(\Gamma(2))$ is any element such that $\tilde{\pi}(\phi_{\sigma}) = \varphi(\sigma)$.

\end{lemma}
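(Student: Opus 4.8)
The plan is to transcribe the proof of Lemma~\ref{lemma quasi-cocycle} almost verbatim, replacing $\varpi_4$ by $\varpi_8$, the lifts $\tilde{\sigma}$ by the lifts $y_{\mathbf{c},\sigma}$ of Hypothesis~\ref{hyp lifts mod 8}, the subgroup $N^{(2)}$ by $\widetilde{N}$, and the quotient $M^{(2)}$ by $\widetilde{M}=\varpi_8(\Gamma(2))/\widetilde{N}$. Two preliminary observations make this work. First, a level-$8$ quasi-cocycle of type $\mathbf{c}$ sends $(1)$ to $0$: taking $\sigma=\tau=(1)$ in Definition~\ref{dfn quasi-cocycle mod 8} gives $\varphi((1))=2\varphi((1))$, and $M^{(4)}$ is $2$-torsion. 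Second, under the running hypothesis $\varpi_{8\to4}(H)\cap\varpi_4(\Gamma(2))=N^{(2)}$, the image $\varpi_{8\to4}(H)$ is precisely the subgroup $H_{\mathbf{c}}$ attached to $\varphi_{\mathbf{c}}$ by Theorem~\ref{thm mod 4}(b), and $\varpi_4(y_{\mathbf{c},\sigma})\in H_{\mathbf{c}}$ by Hypothesis~\ref{hyp lifts mod 8}(a); hence each $u_\sigma\varpi_8(y_{\mathbf{c},\sigma})^{-1}$, and each ``defect term'' $y_{\mathbf{c},\sigma}y_{\mathbf{c},\tau}y_{\mathbf{c},\sigma\tau}^{-1}$, reduces modulo $4$ into $N^{(2)}$, so its $\tilde{\pi}$-image lies in the kernel $M^{(4)}$ of the reduction $\widetilde{M}\twoheadrightarrow M^{(2)}$. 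This is what makes Definition~\ref{dfn quasi-cocycle mod 8} well-posed and what lets me regard every quantity appearing in the quasi-cocycle relation as an element of the elementary abelian $2$-group $M^{(4)}$, where signs are irrelevant.

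For the main equivalence I would set $\phi_\sigma=u_\sigma\varpi_8(y_{\mathbf{c},\sigma})^{-1}$, so that $\varphi(\sigma)=\tilde{\pi}(\phi_\sigma)$, and expand $u_\sigma u_\tau u_{\sigma\tau}^{-1}=\phi_\sigma\,\big(\varpi_8(y_{\mathbf{c},\sigma})\phi_\tau\varpi_8(y_{\mathbf{c},\sigma})^{-1}\big)\,\varpi_8(y_{\mathbf{c},\sigma}y_{\mathbf{c},\tau}y_{\mathbf{c},\sigma\tau}^{-1})\,\phi_{\sigma\tau}$, a product of four elements of $\varpi_8(\Gamma(2))$. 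Applying $\tilde{\pi}$ and using that $\widetilde{M}$ is abelian and that, by Proposition~\ref{prop tilde N}(b), conjugation by $\varpi_8(y_{\mathbf{c},\sigma})$ induces the permutation action of $\sigma$ on $\widetilde{M}$, this $\tilde{\pi}$-image equals $\varphi(\sigma)+{}^{\sigma}\varphi(\tau)+\tilde{\pi}\circ\varpi_8(y_{\mathbf{c},\sigma}y_{\mathbf{c},\tau}y_{\mathbf{c},\sigma\tau}^{-1})+\varphi(\sigma\tau)$, an identity in $M^{(4)}$ (this is also where the notation ${}^{\sigma}\varphi(\tau)$ of Definition~\ref{dfn quasi-cocycle mod 8} is justified). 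Since $u_\sigma u_\tau u_{\sigma\tau}^{-1}$ always lies in $H\cap\varpi_8(\Gamma(2))$, and $\widetilde{N}\subseteq H\cap\varpi_8(\Gamma(2))$ always holds by Corollary~\ref{cor contains tilde N}, the equality $H\cap\varpi_8(\Gamma(2))=\widetilde{N}$ is equivalent to the vanishing of this $\tilde{\pi}$-image for all $\sigma,\tau$, i.e.\ (using $2$-torsion) to $\varphi$ satisfying Definition~\ref{dfn quasi-cocycle mod 8}. For the converse implication, given that $\varphi$ is a quasi-cocycle I would take an arbitrary $u\in H\cap\varpi_8(\Gamma(2))$, write $u=u_{\sigma_1}\cdots u_{\sigma_r}$ with $\sigma_1\cdots\sigma_r=1$, push all the $\phi$-factors to the left exactly as above, and apply the quasi-cocycle relation repeatedly (telescoping, using $\varphi((1))=0$ and $y_{\mathbf{c},(1)}=1$) to conclude $\tilde{\pi}(u)=\varphi(\sigma_1\cdots\sigma_r)=0$, hence $u\in\widetilde{N}$; with the reverse inclusion this gives $H\cap\varpi_8(\Gamma(2))=\widetilde{N}$.

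For the claimed bijection, Corollary~\ref{cor contains tilde N2} shows that any $H$ with $\varpi_{8\to2}(H)=\mathfrak{S}_{2g+1}$ and $H\cap\varpi_8(\Gamma(2))=\widetilde{N}$ is generated by any choice of preimages $u_\sigma$, hence arises from the level-$8$ quasi-cocycle $\sigma\mapsto\tilde{\pi}(u_\sigma\varpi_8(y_{\mathbf{c},\sigma})^{-1})$; the type $\mathbf{c}$ is forced because such an $H$ cannot contain $\varpi_8(\Gamma(4))$ (that would give $H\cap\varpi_8(\Gamma(2))\supseteq\varpi_8(\Gamma(4))$, which is not contained in $\widetilde{N}$), so by Lemma~\ref{lemma sufficient} and Lemma~\ref{lemma contains N} its reduction satisfies $\varpi_{8\to4}(H)\cap\varpi_4(\Gamma(2))=N^{(2)}$, putting us in the setting above. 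Conversely, a level-$8$ quasi-cocycle $\varphi$ of type $\mathbf{c}$ yields the subgroup $H_\varphi$ generated by the elements $\phi_\sigma\varpi_8(y_{\mathbf{c},\sigma})$ for any lifts $\phi_\sigma$ of $\varphi(\sigma)$; this is independent of the lifts (two choices differ by an element of $\widetilde{N}\subseteq H_\varphi$, by Corollary~\ref{cor contains tilde N}), and one checks as in the first paragraph that $\varpi_{8\to4}(H_\varphi)=H_{\mathbf{c}}$, whence $H_\varphi\cap\varpi_8(\Gamma(2))=\widetilde{N}$ by the equivalence just proved. These two assignments are visibly inverse to one another, and the quasi-cocycle attached to a given $H$ is unique since $\phi_\sigma\varpi_8(y_{\mathbf{c},\sigma}),\phi'_\sigma\varpi_8(y_{\mathbf{c},\sigma})\in H$ force $\phi_\sigma(\phi'_\sigma)^{-1}\in H\cap\varpi_8(\Gamma(2))=\widetilde{N}$, so $\varphi(\sigma)=\varphi'(\sigma)$. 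The one step that is not pure transcription, and the part I expect to require the most care, is the verification sketched in the first paragraph that all the terms of the quasi-cocycle relation actually lie in the $2$-torsion subgroup $M^{(4)}\subset\widetilde{M}$ rather than merely in $\widetilde{M}$; once that is secured, the elementary abelian structure of $M^{(4)}$ removes every sign ambiguity and the argument of Lemma~\ref{lemma quasi-cocycle} carries over unchanged.
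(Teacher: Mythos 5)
Your proposal is correct and matches the paper's intended argument, which is literally stated there as ``precisely analogous to the one used to prove Lemma~\ref{lemma quasi-cocycle} (here we refer to Corollaries~\ref{cor contains tilde N} and~\ref{cor contains tilde N2}).'' You have carried out that transcription faithfully, and you have also supplied the one detail the paper leaves implicit but which genuinely needs checking: that every term in the quasi-cocycle relation (namely $\varphi(\sigma)$, ${}^{\sigma}\varphi(\tau)$, $\tilde{\pi}\circ\varpi_8(y_{\mathbf{c},\sigma}y_{\mathbf{c},\tau}y_{\mathbf{c},\sigma\tau}^{-1})$, and $\varphi(\sigma\tau)$) lands in the elementary abelian subgroup $M^{(4)} = \ker(\widetilde{M}\twoheadrightarrow M^{(2)})$ rather than merely in the $\zz/4\zz$-module $\widetilde{M}$, because the relevant elements of $\varpi_8(\Gamma(2))$ reduce modulo $4$ into $H_{\mathbf{c}}\cap\varpi_4(\Gamma(2))=N^{(2)}$. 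This is exactly what legitimizes using the abelian, $2$-torsion calculus to ignore signs, just as in the level-$4$ argument where the target $M^{(2)}$ is automatically an $\ff_2$-space.
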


\begin{proof}

This argument is precisely analogous to the one used to prove Lemma \ref{lemma quasi-cocycle} (here we use Corollaries \ref{cor contains tilde N} and \ref{cor contains tilde N2}).

\end{proof}

We now present an analog of Lemma \ref{lemma quasi-cocycle2} (note the difference in property (i) of each statement).

\begin{lemma} \label{lemma quasi-cocycle2 mod 8}

Let $\varphi : \mathfrak{S}_{2g + 1} \to M^{(4)}$ be a level-$8$ quasi-cocycle.  Then the map $\varphi$ satisfies the following conditions.

(i) We have $\langle \varphi((i, j)), v_{\{i, j\}} \rangle = g$ for $1 \leq i < j \leq 2g + 1$.

(ii) We have $\langle \varphi((i, j)), v_{\{k, l\}} \rangle = 0$ for distinct $i, j, k, l$.

Conversely, suppose that we define a map $\varphi : \mathfrak{S}_{2g + 1} \to M^{(4)}$ as follows.  We first assign values of $\varphi((1, j))$ for $2 \leq j \leq d$ which satisfy conditions (i) and (ii) and then, after fixing a vector $\mathbf{c} \in \ff_2^{2g}$, for each $\sigma \in \mathfrak{S}_{2g + 1} \smallsetminus \{(1, j)\}_{2 \leq j \leq d}$ we write $\sigma$ as a product of the generators $(1, j)$ and apply the condition given in Definition \ref{dfn quasi-cocycle} to determine $\varphi(\sigma)$.  Then the map $\varphi$ is well defined (i.e. it does not depend on the choice of presentation of each $\sigma$ as a product of generators), and $\varphi$ is a quasi-cocycle of type $\mathbf{c}$.

\end{lemma}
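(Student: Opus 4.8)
The plan is to follow the proof of Lemma \ref{lemma quasi-cocycle2} almost verbatim, the only genuinely new features being that one now works in the $\zz/4\zz$-module $\widetilde{M}$, which sits in an extension $0 \to M^{(4)} \to \widetilde{M} \to M^{(2)} \to 0$, and a short counting argument that produces the coefficient $g$ in condition (i) in place of the coefficient $1$ appearing in Lemma \ref{lemma quasi-cocycle2}. First I would record, directly from Definition \ref{dfn quasi-cocycle mod 8} and the relation $y_{\mathbf{c}, (1)} = 1$ of Hypothesis \ref{hyp lifts mod 8}(a), that any level-$8$ quasi-cocycle takes $(1)$ to $0 \in M^{(4)}$.

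To prove (i), fix a transposition $\sigma = (i,j)$ and apply the quasi-cocycle identity to $\sigma^2 = (1)$. Since $S_{2g+1}$ acts on $M^{(4)} \cong V/2V$ through the standard representation and $\sigma$ acts there as the transvection $t_{v_{\{i,j\}}}$, one has $\varphi((i,j)) + {}^{\sigma}\varphi((i,j)) = \langle \varphi((i,j)), v_{\{i,j\}} \rangle v_{\{i,j\}}$, and the identity becomes
\[ 0 \ = \ \langle \varphi((i,j)), v_{\{i,j\}} \rangle v_{\{i,j\}} \ + \ \tilde{\pi} \circ \varpi_8\big( y_{\mathbf{c}, (i,j)}^{\,2} \big). \]
The work is to evaluate the last term. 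Writing $m = m_{\mathbf{c}, i, j}$ and $\mu_m := \prod_{l \neq m} \lfloor m, l \rfloor$, Hypothesis \ref{hyp lifts mod 8}(b) and Proposition \ref{prop tilde N}(b) give $\tilde{\pi}\circ\varpi_8(y_{\mathbf{c},(i,j)}^{\,2}) = \tilde{\pi}(\mu_m) + {}^{\sigma}\tilde{\pi}(\mu_m) + \overline{\lfloor i,j\rfloor}$, which maps to $0$ in $M^{(2)}$ by Lemma \ref{lemma compliment} and hence lies in the image of $M^{(4)}$. Using the defining relations $\overline{\delta_{i,j,l}} = 0$ of $\widetilde{N}$ (Proposition \ref{prop tilde N}) to rewrite each $\overline{\lfloor i,l\rfloor} + \overline{\lfloor j,l\rfloor}$, together with the identity $2\,\overline{\lfloor p,q\rfloor} = \iota([p,q])$ for the inclusion $\iota\colon M^{(4)}\hookrightarrow\widetilde{M}$, this sum collapses to $\iota(g\,v_{\{i,j\}})$ — the parity of the number $2g$ of factors in $\mu_m$ being precisely what survives — and comparing coefficients of $v_{\{i,j\}}$ gives (i).

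Condition (ii) follows exactly as in Lemma \ref{lemma quasi-cocycle2}: for disjoint transpositions $\sigma=(i,j)$, $\tau=(k,l)$, the second relation of Lemma \ref{lemma lift identities mod 8} makes the correction term $\tilde{\pi}\circ\varpi_8(y_{\mathbf{c},\sigma}y_{\mathbf{c},\tau}y_{\mathbf{c},\sigma\tau}^{-1})$ invariant under interchanging $\sigma$ and $\tau$, so expanding $\varphi(\sigma\tau)=\varphi(\tau\sigma)$ in both orders forces $\langle\varphi((k,l)),v_{\{i,j\}}\rangle v_{\{i,j\}} = \langle\varphi((i,j)),v_{\{k,l\}}\rangle v_{\{k,l\}}$, and linear independence of $v_{\{i,j\}}$ and $v_{\{k,l\}}$ in $M^{(4)}$ forces both scalars to vanish. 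For the converse I would repeat the second half of the proof of Lemma \ref{lemma quasi-cocycle2}, using the Coxeter presentation of $S_{2g+1}$ on the transpositions $(1,j)$: well-definedness of the extended map reduces to respecting the relations $(1,j)^2 = (1)$ — which is exactly condition (i) for $(1,j)$ by the computation above — and $(1,j)(1,k)(1,j) = (1,k)(1,j)(1,k)$. For the latter I would expand both sides by the quasi-cocycle rule, cancel the two correction terms via the first relation of Lemma \ref{lemma lift identities mod 8}, and reduce to the vanishing of an explicit element of $M^{(4)}$ built from $\varphi((1,j))$, $\varphi((1,k))$ and a handful of pairing values; by nondegeneracy of the pairing on $M^{(4)}$ (Remark \ref{rmk N}) this is checked by pairing against $v_{\{1,j\}}$, $v_{\{1,k\}}$, $v_{\{j,k\}}$ and the $v_{\{p,q\}}$ with $p,q\notin\{1,j,k\}$, using (i) and (ii). The map so obtained is a level-$8$ quasi-cocycle of type $\mathbf{c}$ by construction.

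The step I expect to be the main obstacle is the evaluation of $\tilde{\pi}\circ\varpi_8(y_{\mathbf{c},(i,j)}^{\,2})$: carefully tracking the $\zz/4\zz$-module structure of $\widetilde{M}$ and performing the reduction modulo the $\delta$-relations of $\widetilde{N}$, which is where the coefficient $g$ — and hence the only substantive difference between this lemma and Lemma \ref{lemma quasi-cocycle2} — comes from.
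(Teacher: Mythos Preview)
Your proposal is correct and follows essentially the same route as the paper's proof: the paper likewise reduces (i) to the evaluation of $\tilde{\pi}\circ\varpi_8(y_{\mathbf{c},(i,j)}^{2})$, expands it as $-\overline{\lfloor i,j\rfloor}+\sum_{k\neq i,j}(\overline{\lfloor i,k\rfloor}+\overline{\lfloor j,k\rfloor})$ and uses the $\delta_{i,j,k}$-relations (together with $\sum_{\{l,m\}\cap\{i,j\}=\varnothing}[l,m]\in N^{(4)}$) to collapse this to $2g\,\overline{\lfloor i,j\rfloor}=g\,v_{\{i,j\}}$, and then declares the rest of the argument ``precisely analogous to the proof of Lemma~\ref{lemma quasi-cocycle2}'' via Lemma~\ref{lemma lift identities mod 8}. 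Your write-up actually spells out slightly more of the converse step than the paper does.
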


\begin{proof}

As with level-$4$ quasi-cocycles, we know that any level-$8$ quasi-cocycle takes the trivial permutation to $0 \in M^{(4)}$, and by a similar argument as in the beginning of the proof of Lemma \ref{lemma quasi-cocycle2}, for any transposition $(i, j) \in \mathfrak{S}_{2g + 1}$ we get 
\begin{equation}
0 = \ ^{\sigma}\!\varphi(\sigma) + \varphi(\sigma) + \tilde{\pi} \circ \varpi_8(y_{\mathbf{c}, (i, j)}^2) = \langle \varphi(\sigma), v_{\{i, j\}} \rangle v_{\{i, j\}} + \tilde{\pi} \circ \varpi_8(y_{\mathbf{c}, (i, j)}^2).
\end{equation}
  It follows from Proposition \ref{prop tilde N}(a) that we may identify $M^{(4)}$ with a subgroup of $\widetilde{M}$.  We proceed to show that $\tilde{\pi} \circ \varpi_8(y_{\mathbf{c}, (i, j)}^2) = gv_{\{i, j\}} \in M^{(4)}$, from which it follows that property (i) holds.  Now an easy calculation shows that 
$$\tilde{\pi} \circ \varpi_8(y_{\mathbf{c}, (i, j)}^2) = \sum_{k \neq i} \overline{\lfloor i, k \rfloor} + \sum_{k \neq j} \overline{\lfloor j, k \rfloor} + \overline{\lfloor i, j \rfloor} = -\overline{\lfloor i, j \rfloor} + \sum_{k \neq i, j} (\overline{\lfloor i, k \rfloor} + \overline{\lfloor j, k \rfloor})$$
\begin{equation}
 = 2g \overline{\lfloor i, j \rfloor} + \sum_{k \neq i, j} \tilde{\pi}_8(\delta_{i, j, k}) + \sum_{\{l, m\} \cap \{i, j\} = \varnothing} 2\overline{\lfloor l, m \rfloor} = 2g \overline{\lfloor i, j \rfloor} = gv_{\{i, j\}} \in M^{(4)}.
\end{equation}
  (For the second-to-last equality we have used the fact that $\sum_{\{l, m\} \cap \{i, j\} = \varnothing} [l, m] \in N^{(4)}$ by definition and therefore the element $\prod_{\{l, m\} \cap \{i, j\} = \varnothing} \lfloor l, m \rfloor^2 \in \varpi_8(\Gamma(4))$ lies in the kernel of $\tilde{\pi}$.)

The rest of the proof of this lemma is precisely analogous to the proof of Lemma \ref{lemma quasi-cocycle2} (here we use Lemma \ref{lemma lift identities mod 8}).

\end{proof}

\subsection{The possible images of $G$ modulo $8$} \label{sec4.3}

The following theorem provides us with the full story modulo $8$; along with the cardinality result in Corollary \ref{cor contains tilde N}, it implies all of Theorem \ref{thm main}(b) except for the assertion that $G$ contains $\Gamma(8) \lhd \Sp(V)$ (which we show in the next section).

\begin{thm} \label{thm mod 8}

Fix any vector $\mathbf{c} \in \ff_2^{2g}$.  For each $\mathbf{d} = (d_2, ... , d_{2g + 1}) \in \ff_2^{2g}$, let $\varphi_{\mathbf{c}, \mathbf{d}} : \mathfrak{S}_{2g + 1} \to M^{(4)}$ be the map defined by assigning $\varphi_{\mathbf{c}, \mathbf{d}}((1, j)) = d_jv_{\{1, j\}} + g\sum_{2 \leq k \leq 2g + 1} v_{1, k}$ and determining $\varphi_{\mathbf{c}, \mathbf{d}}(\sigma)$ for $\sigma \in \mathfrak{S}_{2g + 1} \smallsetminus \{(1, j)\}_{2 \leq j \leq 2g + 1}$ as in the second statement of Lemma \ref{lemma quasi-cocycle2 mod 8} with respect to $\mathbf{c}$ (by that lemma, this map is a well-defined level-$8$ quasi-cocycle of type $\mathbf{c}$).  Then every level-$8$ quasi-cocycle $\varphi : \mathfrak{S}_{2g + 1} \to M^{(4)}$ of type $\mathbf{c}$ is equal to $\varphi_{\mathbf{c}, \mathbf{d}}$ for some $\mathbf{d} \in \ff_2^{2g}$.

Therefore, there are precisely $2^{4g}$ subgroups $H \subset \varpi_{8 \to 2}^{-1}(\mathfrak{S}_{2g + 1})$ satisfying $\varpi_{8 \to 2}(H) = \mathfrak{S}_{2g + 1}$ and $H \cap \varpi_8(\Gamma(2)) = \widetilde{N}$.  Moreover, the set of these subgroups forms a full conjugacy class of subgroups of $\varpi_{8 \to 2}^{-1}(\mathfrak{S}_{2g + 1})$.

\end{thm}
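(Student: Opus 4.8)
The plan is to follow the proof of Theorem~\ref{thm mod 4}(b) essentially line for line, with the level-$8$ objects ($\widetilde{N}$, $\widetilde{M}$, $M^{(4)}$, Lemmas~\ref{lemma quasi-cocycle mod 8} and~\ref{lemma quasi-cocycle2 mod 8}) replacing the level-$4$ ones ($N$, $M$, Lemmas~\ref{lemma quasi-cocycle} and~\ref{lemma quasi-cocycle2}); I would carry it out in three steps. The first step is the classification of level-$8$ quasi-cocycles of a fixed type $\mathbf{c}$. That the maps $\varphi_{\mathbf{c},\mathbf{d}}$ are well defined and are level-$8$ quasi-cocycles of type $\mathbf{c}$ follows from Lemma~\ref{lemma quasi-cocycle2 mod 8} once one verifies that the assigned values $\varphi_{\mathbf{c},\mathbf{d}}((1,j)) = d_jv_{\{1,j\}} + g\sum_{2\le k\le 2g+1}v_{\{1,k\}}$ satisfy conditions (i) and (ii) there, which is a direct parity count using the pairing on $M^{(4)}\cong V/2V$ (and $\langle v_{\{1,k\}},v_{\{1,j\}}\rangle = 1$ for $k\ne j$); the $2^{2g}$ choices of $\mathbf{d}$ clearly give distinct maps. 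Conversely, for an arbitrary level-$8$ quasi-cocycle $\varphi$ of type $\mathbf{c}$, applying conditions (i) and (ii) of Lemma~\ref{lemma quasi-cocycle2 mod 8} to $\varphi$ and to $\varphi_{\mathbf{c},\mathbf{0}}$ shows that $(\varphi - \varphi_{\mathbf{c},\mathbf{0}})((1,j))$ is orthogonal in $M^{(4)}$ to $v_{\{1,j\}}$ and to every $v_{\{k,l\}}$ with $k,l\notin\{1,j\}$; exactly as in the proof of Theorem~\ref{thm mod 4}(b), that orthogonal complement is the line spanned by $v_{\{1,j\}}$, so $(\varphi - \varphi_{\mathbf{c},\mathbf{0}})((1,j)) = d_jv_{\{1,j\}}$ for some $d_j\in\ff_2$; hence $\varphi$ and $\varphi_{\mathbf{c},\mathbf{d}}$ agree on the generators $(1,j)$, so $\varphi = \varphi_{\mathbf{c},\mathbf{d}}$ by the well-definedness clause of Lemma~\ref{lemma quasi-cocycle2 mod 8}.

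The second step is the count. Suppose $H\subset\varpi_{8\to 2}^{-1}(\mathfrak{S}_{2g+1})$ satisfies $\varpi_{8\to 2}(H) = \mathfrak{S}_{2g+1}$ and $H\cap\varpi_8(\Gamma(2)) = \widetilde{N}$. Any element of $\varpi_{8\to 4}(H)\cap\varpi_4(\Gamma(2))$ is the reduction modulo $4$ of an element of $H$ which has trivial reduction modulo $2$ and hence lies in $H\cap\varpi_8(\Gamma(2)) = \widetilde{N}$, whose reduction modulo $4$ is $N^{(2)}$ (Proposition~\ref{prop tilde N}); since also $N^{(2)}\subseteq\varpi_{8\to 4}(H)\cap\varpi_4(\Gamma(2))$, we get $\varpi_{8\to 4}(H)\cap\varpi_4(\Gamma(2)) = N^{(2)}$, so $\varpi_{8\to 4}(H) = H_{\mathbf{c}}$ for a unique $\mathbf{c}\in\ff_2^{2g}$ by Theorem~\ref{thm mod 4}(b). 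Thus $H$ has a well-defined type $\mathbf{c}$, corresponds by the bijection of Lemma~\ref{lemma quasi-cocycle mod 8} to a unique level-$8$ quasi-cocycle of type $\mathbf{c}$, and that quasi-cocycle is some $\varphi_{\mathbf{c},\mathbf{d}}$ by the first step; conversely each $\varphi_{\mathbf{c},\mathbf{d}}$ produces such a subgroup $H_{\mathbf{c},\mathbf{d}}$, and $(\mathbf{c},\mathbf{d})\mapsto H_{\mathbf{c},\mathbf{d}}$ is injective because $\varpi_{8\to 4}(H_{\mathbf{c},\mathbf{d}}) = H_{\mathbf{c}}$ recovers $\mathbf{c}$ and Lemma~\ref{lemma quasi-cocycle mod 8} then recovers $\mathbf{d}$. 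This yields exactly $2^{2g}\cdot 2^{2g} = 2^{4g}$ such subgroups.

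The third step is conjugacy. Conjugating a valid $H$ by an element of $\varpi_{8\to 2}^{-1}(\mathfrak{S}_{2g+1})$ again gives a valid subgroup, since $\varpi_8(\Gamma(2))$ and $\widetilde{N}$ are both normal in $\varpi_{8\to 2}^{-1}(\mathfrak{S}_{2g+1})$ (Proposition~\ref{prop tilde N}(a)). To show all $H_{\mathbf{c},\mathbf{d}}$ lie in one conjugacy class I would first fix $\mathbf{c}$: for $w\in\varpi_8(\Gamma(4))$ and a generator $u_{(1,k)}\in H_{\mathbf{c},\mathbf{d}}$ lying over $(1,k)$, the commutator $wu_{(1,k)}w^{-1}u_{(1,k)}^{-1}$ lies in $\varpi_8(\Gamma(4))$, and because the conjugation action of $\varpi_{8\to 2}^{-1}(\mathfrak{S}_{2g+1})$ on $\widetilde{M}$ factors through $\mathfrak{S}_{2g+1}$ (Proposition~\ref{prop tilde N}(b)), acting on $M^{(4)}\cong V/2V$ through the standard representation (Proposition~\ref{prop N}(c)), the image of this commutator in $M^{(4)}$ is $\bar{w} + {}^{(1,k)}\bar{w} = \langle\bar{w},v_{\{1,k\}}\rangle v_{\{1,k\}}$, where $\bar{w}$ denotes the image of $w$ in $M^{(4)}$; hence $wH_{\mathbf{c},\mathbf{d}}w^{-1} = H_{\mathbf{c},\mathbf{d}'}$ with $\mathbf{d}' - \mathbf{d} = (\langle\bar{w},v_{\{1,k\}}\rangle)_{2\le k\le 2g+1}$. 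Since $\{v_{\{1,k\}}\}$ is a basis of $M^{(4)}$ and the pairing on $M^{(4)}$ is nondegenerate, every difference $\mathbf{d}'-\mathbf{d}$ occurs, so the $H_{\mathbf{c},\mathbf{d}}$ with $\mathbf{c}$ fixed form one conjugacy class. To vary $\mathbf{c}$, I would choose $u_0\in\varpi_4(\Gamma(2))$ with $u_0H_{\mathbf{c}}u_0^{-1} = H_{\mathbf{c}'}$ (available from the conjugacy statement of Theorem~\ref{thm mod 4}(b)), lift it to $\widetilde{u}_0\in\varpi_8(\Gamma(2))$, and observe that $\widetilde{u}_0 H_{\mathbf{c},\mathbf{d}}\widetilde{u}_0^{-1}$ is valid and reduces modulo $4$ to $H_{\mathbf{c}'}$, hence equals $H_{\mathbf{c}',\mathbf{d}''}$ for some $\mathbf{d}''$; together with the previous stage this shows all $H_{\mathbf{c},\mathbf{d}}$ are conjugate.

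I expect the commutator computation in the third step — pinning down the image of $wu_{(1,k)}w^{-1}u_{(1,k)}^{-1}$ in $M^{(4)}$ — to be the one point demanding care; it is the level-$8$ analogue of the computation displayed in~(\ref{eq conjugacy class}), and what must be watched is that the reduction is taken modulo $\widetilde{N}$ (whose intersection with $\varpi_8(\Gamma(4))$ equals $N^{(4)}$), so that Propositions~\ref{prop N}(f) and~\ref{prop tilde N}(b) apply directly.
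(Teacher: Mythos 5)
Your proposal is correct and follows essentially the same approach as the paper: classify level-$8$ quasi-cocycles of a fixed type $\mathbf{c}$ via conditions (i)--(ii) of Lemma~\ref{lemma quasi-cocycle2 mod 8} and the orthogonal-complement argument, then obtain conjugacy by first lifting a level-$4$ conjugator to handle varying $\mathbf{c}$ and then conjugating by elements of $\varpi_8(\Gamma(4))$ to hit every $\mathbf{d}$, using nondegeneracy of the pairing on $M^{(4)}$. Your direct commutator computation $\tilde{\pi}(wu_{(1,k)}w^{-1}u_{(1,k)}^{-1}) = \bar{w} + {}^{(1,k)}\bar{w} = \langle\bar{w},v_{\{1,k\}}\rangle v_{\{1,k\}}$ for general $w\in\varpi_8(\Gamma(4))$ is a cleaner rendering of the step the paper handles by pointing back to the analogous computation in Theorem~\ref{thm mod 4}(b) with generators $\varpi_8(\widetilde{(i,j)}^4)$ and an implicit linearity appeal, but it is the same argument.
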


\begin{proof}

One proves that each level-$8$ quasi-cocycle of type $\mathbf{c}$ is of the form $\varphi_{\mathbf{c}, \mathbf{d}}$ in a precisely analogous way to the way we proved the corresponding statement in Theorem \ref{thm mod 4}(b), this time using Lemmas \ref{lemma quasi-cocycle mod 8} and \ref{lemma quasi-cocycle2 mod 8}.

We now prove the statement about conjugacy.  As in the proof of the analogous statement in Theorem \ref{thm mod 4}(b), it is clear that conjugating any subgroup $H \subset \varpi_{8 \to 2}^{-1}(\mathfrak{S}_{2g + 1})$ satisfying $\varpi_{8 \to 2}(H) = \mathfrak{S}_{2g + 1}$ and $H \cap \varpi_8(\Gamma(2)) = \widetilde{N}$ by any element in $\varpi_{8 \to 2}^{-1}(\mathfrak{S}_{2g + 1})$ yields another subgroup with those same properties.  It remains to show that all such subgroups are conjugate.  For each $\mathbf{c}, \mathbf{d} \in \ff_2^{2g}$, we denote the subgroup of $\varpi_2^{-1}(\mathfrak{S}_d)$ corresponding to $\varphi_{\mathbf{c}, \mathbf{d}}$ via Lemma \ref{lemma quasi-cocycle} by $H_{\mathbf{c}, \mathbf{d}}$.  Choose any $\mathbf{c}, \mathbf{d}, \mathbf{c}', \mathbf{d}' \in \ff_2^{2g}$.  It follows from what we have shown in (the proof of) Theorem \ref{thm mod 4}(b) that there is some element $u \in \varpi_4(\Gamma(2))$ such that $u H_{\mathbf{c}'} u^{-1} = H_{\mathbf{c}}$ (where $H_{\mathbf{c}}, H_{\mathbf{c}'} \subset \varpi_{4 \to 2}^{-1}(\mathfrak{S}_{2g + 1})$ are the subgruops denoted as such in that proof).  It follows that for any lift $\tilde{u} \in \varpi_8(\Gamma(2))$ with $\varpi_{8 \to 4}(\tilde{u}) = u$, we have $\tilde{u} H_{\mathbf{c}', \mathbf{d}'} \tilde{u}^{-1} = H_{\mathbf{c}, \mathbf{d}''}$ for some $\mathbf{d}'' \in \ff_2^{2g}$.  It therefore suffices to prove conjugacy under the assumption that $\mathbf{c} = \mathbf{c}'$; in other words, we only need to show for a fixed $\mathbf{c} \in \ff_2^{2g}$ that all subgroups $H \subset \varpi_{8 \to 2}^{-1}(\mathfrak{S}_{2g + 1})$ satisfying $\varpi_{8 \to 4}(H) = H_{\mathbf{c}}$ and $H \cap \varpi_8(\Gamma(4)) = N^{(4)}$ are conjugate.  To do this, we fix $\mathbf{c}, \mathbf{d} \in \ff_2^{2g}$ and we claim that for any choice of distinct $i, j \in \{1, ..., 2g + 1\}$, if $\mathbf{d}' \in \ff_2^{2g}$ is the vector such that $H_{\mathbf{c}, \mathbf{d}'} = \varpi_8(\widetilde{(i, j)}^4) H_{\mathbf{c}, \mathbf{d}} \varpi_8(\widetilde{(i, j)}^4)^{-1}$, then we have $\mathbf{d}' - \mathbf{d} = (\langle c_{i, j}, c_{1, k} \rangle)_{2 \leq k \leq 2g + 1}$.  This claim is shown using the same argument as was used for the analogous claim in the proof of Theorem \ref{thm mod 4}(b) with the terms $\widetilde{(i, j)}^2$, $\widetilde{(1, k)}$, $\pi$, $M^{(2)}$, and $N^{(2)}$ replaced by $\widetilde{(i, j)}^4$, $y_{\mathbf{c}, (1, k)}$, $\tilde{\pi}$, $M^{(4)}$, and $N^{(4)}$ respectively.  Now the desired conjugacy statement follows by the same nondegeneracy argument as in the end of the proof of Theorem \ref{thm mod 4}(b).

\end{proof}

\section{Lifting to $\zz / 16\zz$} \label{sec5}

In this section we show that the only possible modulo-$16$ images of a subgroup $G \subseteq \Sp(V)$ with $\varpi_2(G) = \mathfrak{S}_{2g + 1}$ are the inverse images under $\varpi_{16 \to 8}$ of the possible modulo-$8$ images found in the previous section, i.e. that we must have $\varpi_{16}(G) \supset \varpi_{16}(\Gamma(8))$.  By Lemma \ref{lemma sufficient}, this is equivalent to saying that we always have $G \supset \Gamma(8) \lhd \Sp(V)$, so in showing this we will complete the proof of Theorem \ref{thm main}(b).  We first need the following (easier) variant of Lemma \ref{lemma a + c}.

\begin{lemma} \label{lemma -1}

Given any vectors $a, b \in V$ with $\langle a, b \rangle \in \{\pm 1\}$, the map $t_a^2 t_{t_b(a)}^2 t_{b}^2 : V \to V$ is the order-$2$ linear automorphism which acts on the subspace generated by $\{a, b\}$ as the scalar $-1$ and on the orthogonal compliment of that subspace as the identity.

\end{lemma}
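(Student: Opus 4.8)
The plan is to establish the identity by a single direct computation of the composite $\Phi := t_a^2\, t_{t_b(a)}^2\, t_b^2$ on an arbitrary $v \in V$, and then to read off the geometric description from the resulting closed formula.

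First I would record the elementary fact that $t_c^2(v) = v + 2\langle v, c\rangle c$ for every $c \in V$ (two applications of $t_c(v) = v + \langle v, c\rangle c$, using $\langle c, c\rangle = 0$). Set $\epsilon := \langle a, b\rangle \in \{\pm 1\}$, so that $t_b(a) = a + \epsilon b$, and note the pairings $\langle a + \epsilon b, a\rangle = -1$, $\langle a + \epsilon b, b\rangle = \epsilon$, and $\langle b, a\rangle = -\epsilon$. Then, writing $x := \langle v, a\rangle$ and $y := \langle v, b\rangle$, I would apply the three squared transvections in turn: $t_b^2(v) = v + 2yb$; next $t_{a+\epsilon b}^2$ sends this to $v + 2(x - \epsilon y)a + 2\epsilon x b$ (using $\langle v + 2yb,\, a + \epsilon b\rangle = x - \epsilon y$); and finally $t_a^2$ sends that to
\[
\Phi(v) \;=\; v - 2\epsilon y\, a + 2\epsilon x\, b \;=\; v - 2\epsilon\langle v, b\rangle\, a + 2\epsilon\langle v, a\rangle\, b,
\]
where one uses that the pairing of $v + 2(x-\epsilon y)a + 2\epsilon x b$ with $a$ equals $-x$.

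From this closed form the conclusion is immediate. If $\langle v, a\rangle = \langle v, b\rangle = 0$, then $\Phi(v) = v$, so $\Phi$ restricts to the identity on the orthogonal complement $\langle a, b\rangle^\perp$. Substituting $v = \alpha a + \beta b$ gives $x = -\beta\epsilon$ and $y = \alpha\epsilon$, hence $\Phi(\alpha a + \beta b) = -(\alpha a + \beta b)$, so $\Phi$ acts as the scalar $-1$ on $\langle a, b\rangle$. Since $\epsilon \in \zz_2^\times$, the restriction of the form to the rank-$2$ summand $\langle a, b\rangle$ is a perfect pairing (equivalently, the reductions of $a, b$ modulo $2$ are linearly independent), so $V = \langle a, b\rangle \oplus \langle a, b\rangle^\perp$, and these two behaviours determine $\Phi$ completely; in particular $\Phi$ has order $2$ because the $-1$-eigenspace $\langle a, b\rangle$ is nonzero. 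There is no real obstacle here beyond keeping track of signs; the one point deserving a word is the direct-sum decomposition, which rests on the unit hypothesis $\langle a, b\rangle \in \{\pm 1\}$ exactly as the rank computation does in the proof of Lemma \ref{lemma a + c}.
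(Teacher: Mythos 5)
Your proof is correct and follows the same route as the paper, namely direct computation from the transvection formula $t_c^2(v) = v + 2\langle v,c\rangle c$. The paper's proof is terser (it just checks $\Phi(a) = -a$, $\Phi(b) = -b$, and $\Phi(c) = c$ for $c \perp a, b$), whereas you derive the closed formula $\Phi(v) = v - 2\epsilon\langle v,b\rangle a + 2\epsilon\langle v,a\rangle b$ for arbitrary $v$ and then read off the same three facts; this is merely a reorganization of the identical computation.
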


\begin{proof}

This can be verified directly from the formulas defining the transvections $t_a$ and $t_b$ by checking that the map $t_a^2 t_{t_b(a)}^2 t_{b}^2$ sends $a$ to $-a$ and $b$ to $-b$ while fixing all $c \in V$ satisfying $\langle c, a \rangle = \langle c, b \rangle = 0$. \qed \phantom\qedhere

\end{proof}

\begin{prop} \label{prop mod 16}

Suppose that $H \subset \varpi_{16 \to 2}^{-1}(\mathfrak{S}_d)$ is a subgroup satisfying $\varpi_{16 \to 2}(H) = \mathfrak{S}_{2g + 1}$.  Then we have $\varpi_{16}(\Gamma(8)) \subset H$.

\end{prop}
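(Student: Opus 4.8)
The plan is to reproduce, one congruence level higher, the strategy of Lemma \ref{lemma contains N mod 8}, using Lemma \ref{lemma -1} to supply the element outside the relevant ``row-sum'' subspace that was previously obtained through the long Steps 1--3 computation. First I would set $H_8 := \varpi_{16 \to 8}(H)$ and dispose of the easy case: if $H_8 \supseteq \varpi_8(\Gamma(4))$, then applying Lemma \ref{lemma sufficient} with $n = 2$ to the closed preimage of $H$ in $\Sp(V)$ shows that this preimage contains $\Gamma(4)$, hence $\Gamma(8)$, so $\varpi_{16}(\Gamma(8)) \subset H$ and we are done. So I would assume $H_8 \not\supseteq \varpi_8(\Gamma(4))$, whence $H_8 \cap \varpi_8(\Gamma(2)) = \widetilde N$ by Corollary \ref{cor contains tilde N}. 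Put $K := H \cap \varpi_{16}(\Gamma(8))$; by Proposition \ref{prop 2-group}(b) it is an $S_d$-invariant subspace of $\varpi_{16}(\Gamma(8))$, so by Proposition \ref{prop N}(d) it suffices to prove $N^{(8)} \subseteq K$ and $K \neq N^{(8)}$.

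For the inclusion $N^{(8)} \subseteq K$: since $N^{(4)} \subseteq \widetilde N \cap \varpi_8(\Gamma(4)) \subseteq H_8$, each generator $\Delta^{(4)}_{p,q,r}$ of $N^{(4)}$ lifts to some $\widetilde\Delta^{(4)}_{p,q,r} \in H \cap \varpi_{16}(\Gamma(4))$, and (using that $\varpi_{16}(\Gamma(8))$ is central of exponent $2$ in $\varpi_{16}(\Gamma(2))$, so that the square is independent of the chosen lift) one computes $\big(\widetilde\Delta^{(4)}_{p,q,r}\big)^2 = \Delta^{(8)}_{p,q,r}$. By Proposition \ref{prop N}(b) these squares generate $N^{(8)}$, giving $N^{(8)} \subseteq K$.

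The substance of the argument is producing an element of $K$ outside $N^{(8)}$. Fix distinct $i,j,k$; since $\langle a_{i,j}, a_{i,k}\rangle \in \zz_2^{\times}$, Lemma \ref{lemma -1} applies with $a := a_{i,j}$, $b := a_{i,k}$, so that $\epsilon_{i,j,k} := \varpi_{16}\big(t_{a_{i,j}}^2\, t_{t_{a_{i,k}}(a_{i,j})}^2\, t_{a_{i,k}}^2\big)$ has order $2$. A short transvection computation, together with Lemma \ref{lemma compliment} and $t_{a_{i,k}}(a_{i,j}) \equiv v_{\{j,k\}} \pmod 2$, gives $\varpi_{16\to4}(\epsilon_{i,j,k}) = \Delta_{i,j,k} \in N^{(2)}$ and, crucially, shows that $\tilde\pi\circ\varpi_8(\epsilon_{i,j,k}) = \overline{\lfloor i,j\rfloor}+\overline{\lfloor i,k\rfloor}+\overline{\lfloor j,k\rfloor}$, which via $\tilde\pi(\delta_{i,j,k}) = 0$ and the isomorphism $M^{(4)} \cong V/2V$ of Proposition \ref{prop N}(c) is identified with the vector $v_{\{1,\dots,2g+1\}\smallsetminus\{i,j,k\}}$ — nonzero because $g \geq 2$. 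Since $\delta_{i,j,k} \in \widetilde N$ and $\widetilde N \cap \varpi_8(\Gamma(4)) = N^{(4)}$, this forces $\varpi_8(\epsilon_{i,j,k}) \notin \widetilde N$. Now, because $\Delta_{i,j,k} \in N^{(2)} \subseteq \varpi_{16\to4}(H)$ by Lemma \ref{lemma contains N}, I would choose $h \in H$ with $\varpi_{16\to4}(h) = \Delta_{i,j,k}$ and write $h = \epsilon_{i,j,k}\,r$ with $r \in \varpi_{16}(\Gamma(4))$. From $\epsilon_{i,j,k}^2 = 1$ one gets $h^2 = ({}^{\epsilon_{i,j,k}}r)\,r$, and since conjugation by $\epsilon_{i,j,k} \in \varpi_{16}(\Gamma(2))$ alters $r \in \varpi_{16}(\Gamma(4))$ by an element of $[\varpi_{16}(\Gamma(2)),\varpi_{16}(\Gamma(4))] = N^{(8)}$ (Proposition \ref{prop N}(f)), we have $h^2 \equiv r^2 \pmod{N^{(8)}}$. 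Finally $\varpi_{16\to8}(r) = \varpi_8(\epsilon_{i,j,k})^{-1}(h \bmod 8)$ with $h \bmod 8 \in H_8$ mapping to $1 \in \mathfrak S_d$, hence $h \bmod 8 \in \widetilde N$; together with $\varpi_8(\epsilon_{i,j,k}) \notin \widetilde N$ this gives $\varpi_{16\to8}(r) \in \varpi_8(\Gamma(4)) \smallsetminus N^{(4)}$, so writing $\varpi_{16\to8}(r) = 1 + 4X$ the element $X \in \mathfrak{sp}(V/2V)$ violates the defining row-sum condition of $N^{(4)}$, and therefore $r^2 \equiv 1 + 8X \pmod{16}$ lies in $\varpi_{16}(\Gamma(8)) \smallsetminus N^{(8)}$. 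Thus $h^2 \in K \smallsetminus N^{(8)}$, and by Proposition \ref{prop N}(d), $K = \varpi_{16}(\Gamma(8)) \subseteq H$.

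The hard part, and the only place where $g \geq 2$ enters, is the verification that $\varpi_8(\epsilon_{i,j,k}) \notin \widetilde N$ — i.e.\ that the element produced by Lemma \ref{lemma -1} is genuinely ``not of type $\widetilde N$'' modulo $8$. Everything else is either a direct transvection computation or a formal manipulation inside the nilpotent group $\varpi_{16}(\Gamma(2))$ using Proposition \ref{prop N}(f); the one point requiring care is keeping track of which reductions land in which of the subspaces $N^{(2)}$, $N^{(4)}$, $\widetilde N$, $N^{(8)}$.
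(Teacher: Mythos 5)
Your argument follows the same overall strategy as the paper: show $N^{(8)} \subseteq H \cap \varpi_{16}(\Gamma(8))$ by squaring lifts of the generators of $N^{(4)}$, then use Lemma \ref{lemma -1} to produce an element of $H \cap \varpi_{16}(\Gamma(8))$ outside $N^{(8)}$ and invoke Proposition \ref{prop N}(d). The packaging differs and is in places cleaner: the paper squares a specific lift $\widetilde\delta_{1,2,3}$ of $\delta_{1,2,3}$ and computes the square explicitly as $\sum_{4\le l<m}[l,m]$ modulo $N^{(8)}$, whereas you factor an arbitrary $h \in H$ lying over $\Delta_{i,j,k}$ as $\epsilon_{i,j,k}\,r$ with $\epsilon_{i,j,k}$ the exact order-$2$ element from Lemma \ref{lemma -1}, and deduce $\varpi_{16\to 8}(r)\notin N^{(4)}$ abstractly from $\varpi_8(\epsilon_{i,j,k})\notin\widetilde N$ together with $H_8\cap\varpi_8(\Gamma(2))=\widetilde N$, never needing to identify $\varpi_{16\to 8}(r)$. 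Your observation $\tilde\pi\circ\varpi_8(\epsilon_{i,j,k}) = v_{\{1,\dots,2g+1\}\smallsetminus\{i,j,k\}} \ne 0$ is exactly the numeric content underlying the paper's computation, and it isolates the role of $g \ge 2$ in the same spot. The initial case split (to guarantee $H_8\cap\varpi_8(\Gamma(2))=\widetilde N$) is not needed in the paper's version, since the paper forces $\varpi_{16\to 8}(\widetilde\delta_{1,2,3})=\delta_{1,2,3}\in\widetilde N$ by construction, but it is harmless. One caveat you share with the paper: Lemma \ref{lemma -1} requires $\langle a, b\rangle \in \{\pm 1\}$, whereas the fixed $a_{i,j}$ only pair to arbitrary odd $2$-adic integers. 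To be airtight one should first replace $a_{i,j}, a_{i,k}$ by vectors $a, b$ with the same reductions mod $2$ and with $\langle a, b\rangle = \pm 1$; Proposition \ref{prop N}(g) shows the corresponding transvection-squares differ by elements of $N^{(4)}\subset\widetilde N$, so none of your computations in $\widetilde M$, $M^{(4)}$, or $\varpi_{16}(\Gamma(8))/N^{(8)}$ are affected.
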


\begin{proof}

We know from Corollary \ref{cor contains tilde N} that the image $\varpi_{16 \to 8}(H)$ contains $\widetilde{N}$.  In particular, if we choose lifts $\widetilde{\Delta}_{i, j, k} \in H$ satisfying $\varpi_{16 \to 8}(\widetilde{\Delta}_{i, j, k}) = \Delta_{i, j, k} \in N^{(4)} \subset \widetilde{N} \subset \varpi_{16 \to 8}(H)$, it is easy to see (using the fact that $\varpi_{16}(\Gamma(4))$ is abelian by Proposition \ref{prop N}(e)) that the squares $\widetilde{\Delta}_{i, j, k}^2$ of these lifts are the generators $\Delta_{i, j, k}$ of $N^{(8)}$ (and also that given any $\tilde{s} \in \varpi_{16 \to 8}^{-1}(N^{(4)})$, we have $\tilde{s}^2 \in N^{(8)}$).  This implies in particular that we have $N^{(8)} \subseteq H \cap \varpi_{16}(\Gamma(8))$, and so by Proposition \ref{prop N}(b) it suffices to show that this containment is strict.

Consider the element $\delta_{1, 2, 3} = \varpi_8(t_{a_{1, 2}}^2 t_{a_{1, 3}}^2 t_{a_{2, 3}}^2) \prod_{4 \leq l < m \leq 2g + 1} \varpi_8(t_{a_{l, m}}^4) \in \varpi_{8}(\Gamma(2))$ which lies in $\widetilde{N} \subset \varpi_{16 \to 8}(H)$ by definition.  We clearly have $t_{a_{1, 2}}(a_{1, 3}) \equiv a_{2, 3}$ (mod $2$).  Then Proposition \ref{prop N}(f) implies that we have $s := \varpi_8(t_{t_{a_{1, 2}}(a_{1, 3})}^{-2} t_{a_{2, 3}}^{2}) \in N^{(4)}$.  Now choose a lift $\widetilde{\delta}_{1, 2, 3} \in H$ with $\varpi_{16 \to 8}(\widetilde{\delta}_{1, 2, 3}) = \delta_{1, 2, 3}$, which we write as $\widetilde{\delta}_{1, 2, 3} = \varpi_{16}(t_{a_{1, 2}}^2 t_{a_{1, 3}}^2 t_{t_{a_{1, 2}}(a_{1, 3})}^2) \tilde{s} \prod_{4 \leq l < m \leq 2g + 1} \varpi_{16}(t_{a_{l, m}}^4)$ for some $\tilde{s} \in \varpi_{16}(\Gamma(4))$ with $\varpi_{16 \to 8}(\tilde{s}) = s$.  Since the commutator of $\varpi_{16}(t_{a_{1, 2}}^2 t_{a_{1, 3}}^2 t_{t_{a_{1, 2}}(a_{1, 3})}^2)$ with any element of $\varpi_{16}(\Gamma(4))$ lies in $N^{(8)}$ by Proposition \ref{prop N}(e), we see that 
\begin{equation} \label{eq mod 16}
\widetilde{\delta}_{1, 2, 3}^2 \equiv \varpi_{16}((t_{a_{1, 2}}^2 t_{a_{1, 3}}^2 t_{t_{a_{1, 2}}(a_{1, 3})}^2)^2) \tilde{s}^2 \prod_{4 \leq l < m \leq 2g + 1} \varpi_{16}(t_{a_{l, m}}^8) \ \ \ \ (\mathrm{mod} \ N^{(8)}).
\end{equation}
  As we have noted above, we also have $\tilde{s}^2 \in N^{(8)}$; moreover, Lemma \ref{lemma -1} implies that $(t_{a_{1, 2}}^2 t_{a_{1, 3}}^2 t_{t_{a_{1, 2}}(a_{1, 3})}^2)^2 = 1 \in \Sp(V)$.  Therefore, we have $\widetilde{\delta}_{1, 2, 3}^2 \in H \cap \varpi_{16}(\Gamma(8))$ and the right-hand side of the equivalence in (\ref{eq mod 16}) can be simplified to $\sum_{4 \leq l < m \leq 2g + 1} [l, m] \in \varpi_{16}(\Gamma(8))$.  Since $g \geq 2$, this element is not an empty sum, and so by definition it does not belong to the subgroup $N^{(8)} \subset \varpi_{16}(\Gamma(8))$ and we are done.

\end{proof}

\section{An application} \label{sec6}

We now present a corollary of Theorem \ref{thm main}(a), which is a purely elementary statement regarding roots of even-degree polynomials with full Galois group.

\begin{thm} \label{thm even-degree polynomial}

Let $K$ be a field of characteristic different from $2$; let $f(x) \in K[x]$ be a separable polynomial of even degree $d \geq 6$ whose Galois group is the full $S_d$; and let $L$ be the splitting field of $f$ over $K$.  Assume that the discriminant $\Delta$ of $f$ does not lie in $-K^2$.  Write $\alpha_1, ... , \alpha_d \in L$ for the roots of $f$.  Then the image modulo $(L^{\times})^2$ of the set of elements $\alpha_j - \alpha_i \in L^{\times}$ for $1 \leq i < j \leq d$ is independent in the multiplicative group $L^{\times} / (L^{\times})^2$.

\end{thm}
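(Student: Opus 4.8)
The plan is to reformulate the statement by Kummer theory and then reduce it to Theorem~\ref{thm main}(a) via the hyperelliptic Jacobian of $y^2=f(x)$. Since $\mathrm{char}\,K\neq 2$, independence of the classes $\overline{\alpha_j-\alpha_i}\in L^\times/(L^\times)^2$ is equivalent to injectivity of the $\ff_2$-linear map $\phi\colon P\to L^\times/(L^\times)^2$, $\phi(e_{\{i,j\}})=\overline{\alpha_j-\alpha_i}$, where $P=\ff_2\big[\binom{\{1,\dots,d\}}{2}\big]$ is the permutation $\ff_2[S_d]$-module on $2$-element subsets. Since $\mathrm{Gal}(\bar K/K)$ acts on these classes compatibly with its action on the roots, $\ker\phi$ is an $\ff_2[S_d]$-submodule of $P$, so it suffices to verify that $\phi$ kills no simple $S_d$-submodule of $P$. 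For $d\geq 6$ even these are the trivial line $\langle\mathbf 1\rangle$ spanned by $\mathbf 1=\sum_{i<j}e_{\{i,j\}}$ and the copy of the standard representation $\bar V$ generated by the ``doubled stars'' $\delta(i)+\delta(j)=\{(i,k),(j,k):k\neq i,j\}$ (together possibly with a third simple constituent, which also occurs inside the Lie algebra module $\mathfrak{sp}(\bar V)$ and so is subsumed by the argument below); identifying the socle of $P$ is a finite representation-theoretic check with Young and Specht modules over $\ff_2$.

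On the first summand, $\phi(\mathbf 1)=\overline{\textstyle\prod_{i<j}(\alpha_j-\alpha_i)}=\overline{\pm\sqrt\Delta}$; if this vanished, say $\pm\sqrt\Delta=\ell^2$ with $\ell\in L^\times$, then every $\sigma\in\mathrm{Gal}(L/K(\sqrt\Delta))=A_d$ would satisfy $\sigma(\ell)^2=\ell^2$, so $\sigma\mapsto\sigma(\ell)/\ell$ would be a homomorphism $A_d\to\{\pm1\}$, hence trivial since $A_d$ is perfect for $d\geq 5$; thus $\ell\in K(\sqrt\Delta)$ and $N_{K(\sqrt\Delta)/K}(\pm\sqrt\Delta)=-\Delta=N(\ell)^2\in(K^\times)^2$, contradicting $\Delta\notin -K^2$. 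This is the only place the discriminant hypothesis is used. On the second summand, a direct computation gives $\phi(\delta(i)+\delta(j))=\overline{-f'(\alpha_i)f'(\alpha_j)}$, so it remains to show that the classes $\overline{f'(\alpha_1)},\dots,\overline{f'(\alpha_d)}$ are as independent in $L^\times/(L^\times)^2$ as the standard representation permits.

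For this last point I would pass to $J=\mathrm{Jac}(C)$ with $C\colon y^2=f(x)$ and $g=(d-2)/2\geq 2$: then $L=K(J[2])$, the Galois action on $J[2]=T_2/2T_2$ is the standard representation, and the mod-$2$ image of $G_2\cap\Sp(T_2)$ is $\mathfrak{S}_d$ — at least when $\sqrt\Delta\notin K(\mu_{2^\infty})$, which holds whenever $\Delta\notin\langle-1,2\rangle K^2$; the residual cases $\Delta\in\pm 2K^2$, where the $2$-adic cyclotomic character obstructs this, would need a separate argument. Theorem~\ref{thm main}(a) then shows $G_2\cap\Sp(T_2)$ is the full inverse image of $\mathfrak{S}_d$, hence contains $\Gamma(2)$, and a short chase with the cyclotomic character upgrades this to the statement that $L(\mu_{2^n})(J[2^n])$ has the maximal Galois group $\Gamma(2)/\Gamma(2^n)$ over $L(\mu_{2^n})$ for all $n$. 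Meanwhile the classical theory of $2$-descent on hyperelliptic Jacobians (via the split étale algebra $L[x]/(f)\cong L^d$) realizes the classes $\overline{f'(\alpha_i)}$ — and, more generally, the Kummer classes matching the cycle products $\prod_{\mathrm{cycle}}\overline{\alpha_{i_{r+1}}-\alpha_{i_r}}$ — inside this torsion tower; comparing isotypic components with the maximality just obtained forces $\phi$ to be nonzero on $\bar V$ (and injective on the cycle submodule). Combined with the computation on $\langle\mathbf 1\rangle$, this yields $\ker\phi=0$. The step I expect to be hardest is precisely this comparison: making explicit how the root differences $\alpha_j-\alpha_i$ (and the derivatives $f'(\alpha_i)$) sit inside the torsion field of $J$, and carrying out the dimension bookkeeping — since $\binom d2$ exceeds $\dim\mathfrak{sp}_{2g}$, the $\binom d2$ generators must be apportioned cleanly among $\langle\mathbf 1\rangle$, $\bar V$, and the cycle submodule; the precise determination of the socle of $P$ and the treatment of the cyclotomic edge cases are the secondary difficulties.
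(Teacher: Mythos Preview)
Your proposal shares the paper's core idea---reduce to Theorem~\ref{thm main}(a) via the Jacobian of $y^2=f(x)$---but it is a sketch with several gaps you yourself flag, and the paper's route avoids every one of them.

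First, the cyclotomic issue. You note that your argument only applies cleanly when $\sqrt{\Delta}\notin K(\mu_{2^\infty})$ and leave the residual cases open. The paper sidesteps this entirely by first reducing to the case $\sqrt{-1}\in K$: one checks that the hypothesis $\Delta\notin -K^{2}$ guarantees $\sqrt{-1}\notin L$, so passing to $K(\sqrt{-1})$ preserves $\Gal(f)=S_d$, and independence over $L(\sqrt{-1})$ implies independence over $L$. With $\mu_4\subset K$ the mod-$4$ Galois image genuinely lands in $\Sp(V/4V)$ and Theorem~\ref{thm main}(a) applies with no cyclotomic bookkeeping.

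Second, and more seriously, your plan to analyze the socle of the permutation module $P=\ff_2[\binom{[d]}{2}]$ is both unfinished and unnecessary. You claim the simple submodules are $\langle\mathbf{1}\rangle$, a copy of $\bar V$, and ``possibly a third constituent,'' and you say the hardest step is the dimension bookkeeping needed because $\binom{d}{2}>\dim\mathfrak{sp}_{2g}$. The paper never touches this. Instead it introduces the $(d-1)(d-2)/2$ elements
\[
\gamma_{i,j}=(\alpha_j-\alpha_i)\prod_{l\neq i,j}(\alpha_d-\alpha_l),\qquad 1\le i<j\le d-1,
\]
whose square roots are known (via an explicit description of the $4$-division field of $J$) to generate $K(J[4])$ over $L$. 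Theorem~\ref{thm main}(a) then forces $\Gal(L(\{\sqrt{\gamma_{i,j}}\})/L)\cong\varpi_4(\Gamma(2))$, which has exactly $2$-rank $(d-1)(d-2)/2$; hence the $\gamma_{i,j}$ are independent in $L^\times/(L^\times)^2$. The passage from independence of the $\gamma_{i,j}$ to independence of all $\alpha_j-\alpha_i$ is then a short Galois-twisting argument: given a hypothetical relation $\prod_{(i,j)\in\mathfrak{I}'}(\alpha_j-\alpha_i)\in (L^\times)^2$, one multiplies by its image under a suitable transposition to produce a nontrivial relation among the $\gamma_{i,j}$, contradicting what was just shown (after a separate check, essentially your argument on $\langle\mathbf{1}\rangle$, that $\mathfrak{I}'$ cannot be all of $\mathfrak{I}$). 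No socle computation, no apportioning of $\binom{d}{2}$ generators among isotypic pieces, and no need to identify where the $f'(\alpha_i)$ sit in the torsion tower.

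In short: your representation-theoretic framing is reasonable, and your treatment of the trivial line is correct and arguably cleaner than the paper's $\zz/4\zz$-quotient argument, but the remaining steps are not carried out and would require substantially more work than the paper's direct approach via the $\gamma_{i,j}$ and a single permutation trick.
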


\begin{proof}

We first assume that $\sqrt{-1} \notin K$.  Then we must have $\sqrt{-1} \notin L$ as well, because otherwise the unique quadratic subextension $K(\sqrt{\Delta}) / K$ would coincide with $K(\sqrt{-1})$, which would violate the hypothesis that $\Delta \notin -K^2$.  It follows that $K(\sqrt{-1}) \cap L = K$, so $L(\sqrt{-1})$ is the splitting field of $f$ over $K(\sqrt{-1})$ and the Galois group of $f$ over $K(\sqrt{-1})$ is still $S_d$.  If the statement of the theorem is true when $K$ is replaced by $K(\sqrt{-1})$, then it clearly holds over $K$ as well (since independence modulo $(L(\sqrt{-1})^{\times})^2$ is stronger than independence modulo $(L^{\times})^2$).  In light of this, we will assume for the rest of the proof that $\sqrt{-1} \in K$.

The statement of the theorem amounts to saying that no product of elements in a nonempty subset of $\{\alpha_j - \alpha_i\}_{1 \leq i < j \leq d}$ is a square in $L$.  We first show that $\pm \prod_{1 \leq i < j \leq d} (\alpha_j - \alpha_i)$ is not a square in $L$.  We first observe that the square of this product is the discriminant $\Delta$ of $f$, so the claim amounts to saying that $L$ does not contain a 4th root of $\Delta$.  Suppose that $\sqrt[4]{\Delta} \in L$.  Since $\Gal(L / K)$ is not contained in $A_d$, we have that $\sqrt{\Delta} \notin K$.  It follows that the extension $K(\sqrt[4]{\Delta}) / K$ has Galois group isomorphic to $\zz / 4\zz$, which does not appear as a quotient of $\Gal(L / K) = S_d$, so we have a contradiction.

For $1 \leq i, j \leq d - 1$, write $\gamma_{i, j} = (\alpha_j - \alpha_i) \prod_{l \neq i, j} (\alpha_d - \alpha_l)$, and let $L' = L(\{\sqrt{\gamma_{i, j}}\}_{1 \leq i < j \leq d - 1})$.
  We claim that if the $\alpha_i$'s are chosen ``generically" (i.e. if there is a field $k \subset K$ such that the $\alpha_i$'s are transcendental and independent over $k$, with $L = k(\{\alpha_i\}_{1 \leq i < j \leq d})$ and $K \subset L$ being the subfield fixed under all permutations of the $\alpha_i$'s), then there is a free $\zz_2$-module $V$ of rank $(d/2 - 1)$ equipped with a nondegenerate alternating bilinear pairing such that $\Gal(L' / K) = H := \varpi_{4 \to 2}^{-1}(\mathfrak{S}_d) \subset \Sp(V / 4V)$, with $\Gal(L' / L) \subset \Gal(L' / K)$ corresponding to $\varpi_4(\Gamma(2)) \subset H$.  In fact, it is possible to verify this claim directly, but in any case it follows from the description of the $4$-division field of the Jacobian of the hyperelliptic curve defined by $y^2 = f(x)$ in \cite[Theorem 2.4]{yelton2019abelian}, in which case $V$ is the $2$-adic Tate module $T_2$.  This implies that without the ``genericness" assumption, we have an inclusion $\Gal(L' / K) \hookrightarrow H$ such that the subgroup corresponding to $\Gal(L' / L)$ coincides with the intersection of $\Gal(L' / K) \subseteq H$ with $\varpi_4(\Gamma(2))$.  Since the image of $\Gal(L' / K) \subseteq H$ modulo $\varpi_4(\Gamma(2))$ is $\Gal(L / K) \subseteq \varpi_{4 \to 2}(H) = \mathfrak{S}_d$, which is isomorphic to $S_d$ by our assumption on the Galois group of $f$, we have $\varpi_2(\Gal(L' / K)) = \mathfrak{S}_d$.  Theorem \ref{thm main}(a) then implies that $\Gal(L' / K)$ contains $\varpi_4(\Gamma(2)) \subset H$ (or equivalently, that $\Gal(L' / K) = H$).  It follows that $\Gal(L' / L) = \varpi_4(\Gamma(2))$, which in turn is isomorphic to $(\zz / 2\zz)^{(d - 1)(d - 2) / 2}$ by \cite[Corollary 2.2]{sato2010abelianization} and its proof.  Thus, the extension $L'$ is generated over $L$ by the square roots of $(d - 1)(d - 2) / 2$ elements of $L^{\times}$ which are independent modulo $(L^{\times})^2$.  It then follows from the definition of $L'$ that the $(d - 1)(d - 2) / 2$-element set $\{\gamma_{i, j}\}_{1 \leq i < j \leq d - 1}$ is independent modulo $(L^{\times})^2$.

Now suppose that we are given a subset $\mathfrak{I'} \subseteq \mathfrak{I} := \{(i, j) \in \{1, ... , d\}^2 \ | \ i < j\}$ such that $\prod_{(i, j) \in \mathfrak{I}} (\alpha_j - \alpha_i) = a^2$ for some $a \in L$.  Note that for any permutation $\sigma \in S_d = \Gal(L / K)$, the element $\prod_{(i, j) \in \mathfrak{I}} (\alpha_{\sigma(j)} - \alpha_{\sigma(i)}) = (^{\sigma}\!a)^2$ lies in $L^2$.  We will now show that $\mathfrak{I}' = \varnothing$, which directly implies the statement of the theorem.  Suppose that $\mathfrak{I}' \neq \varnothing$.  We showed above that $\prod_{1 \leq i < j \leq d} (\alpha_j - \alpha_i) \notin L^2$, which implies that $\mathfrak{I}' \neq \mathfrak{I}$.  Therefore, there exist distinct natural numbers $q, r, s \in \{1, ... , d\}$ such that $(q, r) \in \mathfrak{I}'$ but $(q, s) \notin \mathfrak{I}'$ (here we are assuming without loss of generality that $q < r, s$).  Then, letting $\sigma$ be the transposition $(r, s)$, we have $\prod_{(i, j) \in \mathcal{I}'} (\alpha_j - \alpha_i)(\alpha_{\sigma(j)} - \alpha_{\sigma(i)}) \in L^2$.  It is straightforward to check that 
\begin{equation} \label{eq 1.1}
1 \equiv \prod_{(i, j) \in \mathfrak{I}'} (\alpha_j - \alpha_i)(\alpha_{\sigma(j)} - \alpha_{\sigma(i)}) \equiv \prod_{(i, j) \in I \times \{r, s\}} (\alpha_j - \alpha_i) \ \ \ \ (\mathrm{mod} \ (L^{\times})^2)
\end{equation}
 for some subset $I \subset \{1, ... , d\} \smallsetminus \{r, s\}$ with $q \in I$.

First suppose that $I$ has even cardinality.  Then it follows from a straightforward computation that the element on the right-hand side of the equivalence (\ref{eq 1.1}) is equivalent modulo $(L^{\times})^2$ to $\prod_{(i, j) \in I \times \{r, s\}} \gamma_{i, j}$.  But (\ref{eq 1.1}) says that this is equivalent to $1$, which contradicts the independence of the $\gamma_{i, j}$'s.  Now suppose that $I$ has odd cardinality.  Then $I \subsetneq \{1, ... , d\} \smallsetminus \{r, s\}$.  Choose $t \in \{1, ... , d\} \smallsetminus (\{r, s\} \cup I)$, and let $\tau \in S_d$ be the transposition $(q, t)$.  We then have 
\begin{equation} \label{eq 1.2}
1 \equiv \prod_{(i, j) \in I \times \{r, s\}} (\alpha_j - \alpha_i)(\alpha_{\tau(j)} - \alpha_{\tau(i)}) \equiv \prod_{(i, j) \in \{q, t\} \times \{r, s\}} (\alpha_j - \alpha_i) \ \ \ \ (\mathrm{mod} \ (L^{\times})^2).
\end{equation}
  Now similarly, the element on the right-hand side is equivalent modulo $(L^{\times})^2$ to $\prod_{(i, j) \in \{q, t\} \times \{r, s\}} \gamma_{i, j}$, and (\ref{eq 1.2}) contradicts the independence of the $\gamma_{i, j}$'s.

\end{proof}

\bibliographystyle{plain}
\bibliography{bibfile}

\end{document}